\UseRawInputEncoding
\documentclass[12pt]{article}
\usepackage[centertags]{amsmath}
\usepackage{amsfonts}
\usepackage{amssymb}
\usepackage{latexsym}
\usepackage{amsthm}
\usepackage{newlfont}
\usepackage{graphicx}
\usepackage{listings}
\usepackage{booktabs}
\usepackage{abstract}
\usepackage{enumerate}
\usepackage{xcolor}
\RequirePackage{srcltx}
\date{}

\newlength{\defbaselineskip}
\newcommand{\setlinespacing}[1]%
           {\setlength{\baselineskip}{#1 \defbaselineskip}}

\newcommand{\actaqed}{\hfill $\actabox$}
{\medskip\noindent \textit{Proof of #1. }}%
{\actaqed \medskip}

\def\cA{{\mathcal A}}

\def\cC{{\mathcal C}}
\def\cD{{\mathcal D}}

\def\cK{{\mathcal K}}
\def\cL{{\mathcal L}}

\def\cS{{\mathcal S}}
\def\cT{{\mathcal T}}

\def\cV{{\mathcal V}}

\def\bbC{{\mathbb C}}

\def\bbN{{\mathbb N}}

\def\bbR{{\mathbb R}}

\def\bbT{{\mathbb T}}

\def\bF{{\mathbf F}}

\def\bH{{\mathbf H}}

\def\bW{{\mathbf W}}

\def\ba{\mathbf a}
\def\bb{\mathbf b}

\def\bj{\mathbf j}

\def\bp{\mathbf p}
\def\bq{\mathbf q}
\def\br{\mathbf r}
\def\bs{\mathbf s}
\def\btt{\mathbf t}
\def\bu{\mathbf u}

\def\bw{\mathbf w}
\def\bx{\mathbf x}
\def\by{\mathbf y}
\def\bz{\mathbf z}
 
 \def \<{\langle}
\def\>{\rangle}

\def \e{\varepsilon}

\def \ff{\varphi}
\def\al{\alpha}
\def\bt{\beta}
\def \ro{\varrho}

\def\la{\lambda}

\def \sp{\operatorname{span}}

\def\bt{\beta}

\newtheorem{Theorem}{Theorem}[section]
\newtheorem{Lemma}{Lemma}[section]
\newtheorem{Definition}{Definition}[section]
\newtheorem{Proposition}{Proposition}[section]
\newtheorem{Remark}{Remark}[section]

\newtheorem{Corollary}{Corollary}[section]
\numberwithin{equation}{section}

\newcommand{\be}{\begin{equation}}
\newcommand{\ee}{\end{equation}}

\begin{document}

\title{Nonlinear approximation with adaptive dictionaries}

\author{V. Temlyakov}

\newcommand{\Addresses}{{
  \bigskip
  \footnotesize


 \medskip
  V.N. Temlyakov, \textsc{University of South Carolina, USA,\\ Steklov Mathematical Institute of Russian Academy of Sciences, Russia;\\ Lomonosov Moscow State University, Russia; \\ Moscow Center of Fundamental and Applied Mathematics, Russia.\\  
E-mail:} \texttt{temlyakovv@gmail.com}

}}
\maketitle

\begin{abstract}{It is well known that the study of the Kolmogorov widths of a function class, which is the image of the unit ball of the $L_q$ space of an integral operator $J_K$ with the kernel $K$, is closely connected with the study of  sparse approximations of the kernel $K$ with respect to the classical bilinear dictionary. Recently, it was discovered that if instead of the Kolmogorov widths we study the errors of optimal linear sampling recovery of the same classes, then we need to study  sparse approximations of the kernel $K$ with respect to an adaptive dictionary, which is determined by the kernel $K$. In this paper we study this important problem of nonlinear approximation with respect to an adaptive dictionary. 
Also, in this paper we continue to develop the following general approach, which is related to the above nonlinear approximation problem. We study asymptotic behavior of the errors of sampling recovery not for an individual smoothness class, how it is usually done, but for the collection of classes, which are defined by integral operators with kernels coming from a given class of functions. Earlier, such approach was realized for the Kolmogorov widths and very recently for the entropy numbers.}
\end{abstract}

\section{Introduction}
\label{In}

This paper is a followup to the recent author's paper \cite{VT217}. In this paper we continue to study approximation of the multivariate functions $K(\bx,\by)$, $\bx =(x_1,\dots,x_d)$,  $\by =(y_1,\dots,y_d)$   by linear combinations   of functions of the form $u(\bx)v(\by)$. In the case, when we can choose arbitrary functions $u$ and $v$, it is a classical problem of best bilinear approximation. In the paper \cite{VT217} it was pointed out that the problem of optimal linear recovery on function classes defined by an integral operator with the kernel $K(\bx,\by)$ is closely related to the problem of 
 approximation of $K$ by linear combinations of functions of the form $u(\bx)v(\by)$ with functions $v(\by)$ defined by the kernel $K(\bx,\by)$, namely, $v(\by) = K(\bz,\by)$ with some $\bz$. This means that we approximate $K(\bx,\by)$ with respect to a dictionary, which is determined by the function $K(\bx,\by)$ itself. We call such a process -- approximation with adaptive dictionaries (see below for more details). In the paper \cite{VT217} mostly the case $d=1$, i.e. the case of functions $K(x,y)$ of two variables, was studied. In this paper we focus on the general case $d\ge 1$. 

We now proceed to the detailed presentation. Let $(\Omega,\mu)$ be a probability space. 
  By the $L_p$, $1\le p< \infty$, norm we understand
$$
\|f\|_p:=\|f\|_{L_p(\Omega,\mu)} := \left(\int_\Omega |f|^p\,d\mu\right)^{1/p}.
$$
By the $L_\infty$-norm we understand the uniform norm of continuous functions
$$
\|f\|_\infty := \sup_{\omega\in\Omega} |f(\omega)|
$$
and with some abuse of notation we occasionally write $L_\infty(\Omega)$ for the space $\cC(\Omega)$ of continuous functions on $\Omega$. We define the vector $L_{\bp}$-norm, $\bp=(p_1,\dots,p_v)$, of functions of $v$ variables $\bx=(x_1,\dots,x_v)$ as
$$
\|f(\bx)\|_{\bp} :=\|f(\bx)\|_{(p_1,\dots,p_v)} :=\|f(\bx)\|_{p_1,\dots,p_v} := \|\cdots\|f(\cdot,x_2,\dots,x_v)\|_{p_1}\cdots\|_{p_v}.
$$

We now introduce some concepts from nonlinear sparse approximation.  

The first example of sparse approximation with respect to redundant dictionaries 
 was considered by E. Schmidt in \cite{Sch}, who studied the
approximation of functions $f(x,y)$ of two variables by bilinear forms,
$$
\sum_{i=1}^mu_i(x)v_i(y),
$$ 
in $L_2([0,1]^2)$. In this case we use the following dictionary (bilinear dictionary)
\be\label{Bi1}
\Pi :=  \{u(x)v(y)\,:\, u,v\in L_2([0,1])\},
\ee
where the   functions $u$ and $v$ are functions of a single variable. 
This problem is closely connected with properties of the integral operator
$$
(J_fg)(x) := \int_0^1 f(x,y)g(y) dy
$$
 with the kernel $f(x,y)$. E. Schmidt (\cite{Sch}) gave an expansion (known as the Schmidt expansion)
\be\label{Bi2}
f(x,y) = \sum_{j=1}^\infty s_j(J_f) \phi_j(x)\psi_j(y),
\ee
where $\{s_j(J_f)\}$ is a nonincreasing sequence of singular numbers of $J_f$, i.e. $s_j(J_f) := \lambda_j(J^*_fJ_f)^{1/2}$, where $\{\lambda_j(A)\}$ is a sequence of eigenvalues of an operator $A$, and $J^*_f$ is the adjoint operator to $J_f$. The two sequences $\{\phi_j(x)\}$ and $\{\psi_j(y)\}$ form orthonormal sequences of eigenfunctions of the operators $J_fJ_f^*$ and $J_f^*J_f$, respectively. He also proved that 
$$
\left\|f(x,y) -\sum_{j=1}^m s_j(J_f) \phi_j(x)\psi_j(y)\right\|_{L_2} 
$$
\be\label{Bi3}
=
\inf_{u_j,v_j\in L_2, \quad j=1,\dots,m}\left\|f(x,y) -\sum_{j=1}^m  u_j(x)v_j(y)\right\|_{L_2}.
\ee
The reader can find a detailed discussion of this connection in \cite{VTbook}, Ch.2.

In a general setting we are working in a Banach space $X$ with a redundant system of elements $\cD$ (dictionary $\cD$).   
An element (function, signal) $h\in X$ is said to be $m$-sparse with respect to $\cD$ if
it has a representation $h=\sum_{i=1}^mc_ig_i$,   $g_i\in \cD$, $i=1,\dots,m$, where $\{c_i\}$ are real or complex numbers. The set of all $m$-sparse elements is denoted by $\Sigma_m(\cD)$. For a given element $f$ we introduce the error of best $m$-term approximation
$$
\sigma_m(f,\cD)_X := \inf_{h\in\Sigma_m(\cD)} \|f-h\|_X.
$$
We now make a comment on terminology. In the greedy approximation literature we define a dictionary $\cD$ as a system $\{g\}$  of elements $g\in X$ with the following two properties
$$
\|g\|_X\le 1 \quad\text{for all} \quad g\in \cD \quad \text{and the closure of}\, \sp(\cD) =X.
$$
The normalization condition $\|g\|_X\le 1$ is imposed for convenience. Clearly, the characteristic $\sigma_m(f,\cD)_X$ does not depend on normalization. In this paper we mostly use this characteristic. 
Let us discuss the second condition. Suppose that a system $\cS\subset X$ does not satisfy this condition. Then, instead of the Banach space $X$ we consider a subspace $X_\cS$ of $X$, which is the closure (in $X$) of $\sp(\cS)$. This makes the system $\cS$ to be a dictionary in the Banach space $X_\cS$. For this reason, we sometimes with a little abuse of exactness freely use both terms {\it system} and {\it dictionary} for a general system. In the greedy approximation theory there are theorems, which guarantee convergence of certain greedy algorithms with respect to any dictionary $\cD$ for any element $f\in X$. Clearly, in the case, when we deal with a system, we can only apply those theorems to $f\in X_\cS$.   

We stress that the bilinear dictionary $\Pi$ does not depend on a function under approximation. In this sense it is not adaptive -- we use it for approximation of all functions. It turns out that in some  problems we need to study approximation of a given kernel $K(\bx,\by)$ with respect to a dictionary, which is determined by $K$. We now give a more general definition of the bilinear dictionary  (system) and define three adaptive systems. Let $\bp=(p_1,p_2)$, $1\le p_1,p_2 \le \infty$ be given. In the case $\bp=(p,p)$ for brevity we write $p$ instead of $p,p$ in the notations. Sometimes, we drop $\bp$ from the notation. 

{\bf Bilinear dictionary $\Pi(\bp)$.}   Define
$$
\Pi(\bp):=\cL\cL(\bp) :=\{g: g(\bx,\by) = u(\bx)v(\by),\, u\in L_{p_1}(\Omega^1),\, v\in L_{p_2}(\Omega^2) \}.
$$

 $\cL\cK(\bp)$-{\bf system.} Assume that $K \in L_\bp(\Omega^1\times \Omega^2)$ satisfies the following property. 
For any $\bz \in \Omega^1$ we have $K(\bz,\cdot) \in L_{p_2}(\Omega^2)$. Define
$$
\cL\cK(\bp) :=\{g: g(\bx,\by) = u(\bx,\bz)K(\bz,\by),\,  \forall \bz \in \Omega^1\,\,\text{we have}\,\, u(\cdot,\bz) \in L_{p_1}(\Omega^1)  \}.
$$

 $\cK\cL(\bp)$-{\bf system.} Assume that $K \in L_\bp(\Omega^1\times \Omega^2)$ satisfies the following property. 
For any $\bz \in \Omega^2$ we have $K(\cdot,\bz) \in L_{p_1}(\Omega^1)$. Define
$$
\cK\cL(\bp) :=\{g: g(\bx,\by) = K(\bx,\bz)v(\bz,\by),\,  \forall \bz \in \Omega^2\,\,\text{we have}\,\, v(\bz,\cdot) \in L_{p_1}(\Omega^2)  \}.
$$

$\cK\cK(\bp)$-{\bf system.} Assume that $K \in L_\bp(\Omega^1\times \Omega^2)$ satisfies the following property. 
For any $\ba \in \Omega^1$ we have $K(\ba,\cdot) \in L_{p_2}(\Omega^2)$ and for any $\bb \in \Omega^2$ we have $K(\cdot,\bb) \in L_{p_1}(\Omega^1)$. Define
$$
\cK\cK(\bp) :=\{g: g(\bx,\by) = K(\bx,\bb)K(\ba,\by), \quad (\ba,\bb)\in  \Omega^1\times \Omega^2 \}.
$$
Note that in the literature (see \cite{BS}) the functions $K_{ab}(x,y) := K(x,b)K(a,y)$, $(x,y),(a,b) \in [0,1]^2$ are called {\it cross-functions} of the function $K(x,y)$ and the system $\cK\cK(\infty)$ is called the {\it system of cross-functions}. 

In Section \ref{RI} we prove the following general inequalities (see Theorem \ref{2InT1}): For any $b\in(1,2]$ there exists a positive constant $B=B(b)$ such that for any continuous $K$ we have
\be\label{In1}
  \sigma_m(K,\cL\cK(\infty))_\infty \le Bm^{1/2}  \sigma_{\theta (m-1)}(K,\Pi(\infty))_\infty
  \ee
  with $\theta =1/b$ in the real case and $\theta =1/(2b)$ in the complex case. 
Also, we prove there that the extra factor $m^{1/2}$ in the inequality (\ref{In1})   is sharp (see Proposition \ref{2InP1}).

A number of results (upper bounds, lower bounds, and sometimes the right orders) are obtained in the paper \cite{VT217} in the case $d=1$ for the following setting: Estimate 
$$
\sup_{K\in \bF}  \sigma_m(K,\cL\cK(\bp))_\bp  
 $$
for a certain function class $\bF$. 

In this paper we extend some of those results from the case $d=1$ to the general case $d\ge 1$. For that we apply here the same general strategy, which was used in \cite{VT217}. It is a three step strategy. First, we relate  $\sigma_m(K,\cL\cK(\bp))_\bp$ to the optimal linear recovery characteristic $\ro_m(\bW^K_q,L_\bp)$. Second, we relate  $\sigma_m(K,\Pi(\bp))_\bp$, $\bp= (p,\infty)$, to the Kolmogorov width $d_m(\bW^K_1,L_p)$. Third, we use known results, which provide an upper bound on $\ro_m(\bW,L_p)$ in terms of the Kolmogorov width $d_n(\bW,L_\infty)$. Note that the first inequality of that type, namely, the inequality
$$
\ro_{bn}(\bW,L_2) \le Bd_n(\bW,L_\infty)
$$
was obtained in \cite{VT183} (see Theorem \ref{VT183T1} below). Later, some generalizations of that inequality to the case $p\in (2,\infty]$ were proved in \cite{KKLT} and \cite{KPUU2}. Here we use   Theorem \ref{krT1}. 

In Section \ref{RI} we prove the following bound (see Corollary \ref{UBC1}). Assume that  we have $r_1>1/2$, $r_2>0$. Then (see the definition of classes $\bW^\br_2$ in Section \ref{fc} below)
 $$
\sup_{K\in \bW^\br_2}  \sigma_m(K,\cL\cK)_{2} \ll m^{-r_1-r_2}(\log m)^{(d-1)(r_1+r_2)+1/2} .
 $$

As we already pointed out above the study of linear recovery is closely related to the Kolmogorov widths. Namely, to the Kolmogorov widths in the uniform norm $L_\infty$.  In Section \ref{NK} we focus on the case of the uniform norm and complement the  results  known in the case of $L_p$, $p\in [2,\infty)$, by the case $p=\infty$. The following bound (see Theorem \ref{KBT1} below) is a step in that direction.  
Assume that  we have $r_1>1/2$, $r_2>0$. Then (see the definition of classes $\bW^\br_2$ in Section \ref{fc} below)
 $$
\sup_{K\in \bW^\br_2}  d_m(\bW^K_2)_\infty \ll m^{-r_1-r_2-1/2}(\log m)^{(d-1)(r_1+r_2)+1/2} .
 $$
 
Thus, the new results of the paper are contained in Sections \ref{NK} and \ref{RI}. In Section \ref{fc} we present the definitions of function classes that we discuss in the paper. In  Section \ref{kr} we formulate results on inequalities between the error of optimal linear sampling recovery and the Kolmogorov widths. In Section \ref{KB} we collect some of the known results on the Kolmogorov widths and their relation to the sparse approximation with respect to the bilinear dictionary $\Pi$. 

\section{Function classes}
\label{fc}

We begin with the definition of classes $\bW^\ba_\bq$ (see, for instance, \cite{VTmon}, p.31, in the case of scalar $q$).
\begin{Definition}\label{fcD1}
In the univariate case, for $a>0$, let
\be\label{Bi8}
F_a(x):= 1+2\sum_{k=1}^\infty k^{-a}\cos (kx-a\pi/2)
\ee
be the Bernoulli kernel and in the multivariate case, for $\ba=(a_1,\dots,a_v) \in \bbR^v_+$, $\bx=(x_1,\dots,x_v)\in \bbT^v$, let
\be\label{Bi8m}
F_\ba(\bx) := \prod_{j=1}^v F_{a_j}(x_j).
\ee
Denote for $\mathbf{1}\le \bq\le \infty$ (we understand the vector inequality coordinate wise)
$$
\bW^\ba_\bq := \{f:f=\varphi\ast F_\ba,\quad \|\varphi\|_\bq \le 1\},
$$
where
$$
( F_\ba \ast \varphi)(\bx):= (2\pi)^{-v}\int_{\bbT^v} F_\ba(\bx-\by) \ff(\by)d\by,\quad \bbT^v := [0,2\pi)^v.
$$
\end{Definition}
The classes $\bW^\ba_\bq$ are classical classes of functions with {\it dominating mixed derivative}
(Sobolev-type classes of functions with mixed smoothness).
 
We now proceed to the definition of the classes $\bH^\ba_\bq := \bH^{\ba,v}_\bq$ of periodic functions of $v$ variables, which is based on the mixed differences (see, for instance, \cite{VTmon}, p.31,  in the case of scalar $q$).  
 
\begin{Definition}\label{fcD2}
Let  $\btt =(t_1,\dots,t_v)$ and $\Delta_{\btt}^l f(\bx)$
be the mixed $l$-th difference with
step $t_j$ in the variable $x_j$, that is
$$
\Delta_{\btt}^l f(\bx) :=\Delta_{t_v,v}^l\cdots\Delta_{t_1,1}^l
f(x_1,\dots ,x_v) .
$$
Let $e$ be a subset of natural numbers in $[1,v]$. We denote
$$
\Delta_{\btt}^l (e) :=\prod_{j\in e}\Delta_{t_j,j}^l,\qquad
\Delta_{\btt}^l (\varnothing) := Id \,-\, \text{identity operator}.
$$
We define the class $\bH_{\bq,l}^\ba B$, $l > \|\ba\|_\infty$,   as the set of
$f\in L_\bq(\bbT^v)$ such that for any $e$
\be\label{Bi9}
\bigl\|\Delta_{\btt}^l(e)f(\bx)\bigr\|_\bq\le B
\prod_{j\in e} |t_j |^{a_j} .
\ee
In the case $B=1$ we omit it. It is known (see Theorem \ref{H} below)  that the classes $\bH^\ba_{\bq,l}$ with different $l>\|\ba\|_\infty$ are equivalent. So, for convenience we omit $l$ from the notation. 
\end{Definition}

We now formulate a result, which gives an equivalent description of classes $\bH^\ba_{\bq,l}$. 
 We need some classical trigonometric polynomials. The univariate Fej\'er kernel of order $j - 1$:
$$
\mathcal K_{j} (x) := \sum_{|k|\le j} \bigl(1 - |k|/j\bigr) e^{ikx} 
=\frac{(\sin (jx/2))^2}{j (\sin (x/2))^2}.
$$
The Fej\'er kernel is an even nonnegative trigonometric
polynomial of order $j-1$.  It satisfies the obvious relations
\be\label{FKm}
\| \mathcal K_{j} \|_1 = 1, \qquad \| \mathcal K_{j} \|_{\infty} = j.
\ee
Let $\cK_\bj(\bx):= \prod_{i=1}^v \cK_{j_i}(x_i)$ be the $v$-variate Fej\'er kernels for $\bj = (j_1,\dots,j_d)$ and $\bx=(x_1,\dots,x_v)$.

The univariate de la Vall\'ee Poussin kernels are defined as follows
$$
\cV_m := 2\cK_{2m} - \cK_m.
$$
We also need the following special trigonometric polynomials.
Let $s$ be a nonnegative integer. We define
$$
\mathcal A_0 (x) := 1, \quad \mathcal A_1 (x) := \mathcal V_1 (x) - 1, \quad
\mathcal A_s (x) := \mathcal V_{2^{s-1}} (x) -\mathcal V_{2^{s-2}} (x),
\quad s\ge 2,
$$
where $\mathcal V_m$ are the de la Vall\'ee Poussin kernels defined above.
For $\bs=(s_1,\dots,s_v)\in \bbN^v_0$ define
$$
\cA_\bs(\bx) := \prod_{j=1}^v  \cA_{s_j}(x_j),\qquad \bx=(x_1,\dots,x_v)
$$
and
$$
A_\bs(f) := \cA_\bs \ast f.
$$

The following result is known (see, for instance, \cite{VTmon}, p.32, for the scalar $q$ and \cite{VT32} for the vector $\bq$).

\begin{Theorem}\label{H} Let $f\in \bH^\ba_{\bq,l}$, $\mathbf 1 \le \bq \le \infty$. Then, for $\bs \ge \mathbf 0$
\be\label{H1}
\|A_\bs(f)\|_\bq \le C(\ba,v,l)2^{-(\ba,\bs)}.
\ee
Conversely, from (\ref{H1}) it follows that there exists $B>0$, which does not depend on $f$, such that $f\in \bH^\ba_{\bq,l}B$.
\end{Theorem}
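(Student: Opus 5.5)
The plan is to argue both directions through the one-dimensional structure of the kernels $\cA_s$, applied one coordinate at a time. The vector norm $\|\cdot\|_\bq$ is an iterated norm and convolution in the variable $x_j$ commutes with it in the right way: a univariate kernel $g$ acting in $x_j$ is bounded on $L_\bq$ with norm at most $\|g\|_1$, by Minkowski's integral inequality applied to the inner norms. So every univariate estimate that uses only translation invariance, Minkowski's inequality and $L_1$ bounds on kernels carries over verbatim to mixed norms.

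\textbf{Direct part.} Fix $\bs$ and let $e:=\{j: s_j\ge 1\}$. For each $j\in e$ the key fact is that $\cA_{s_j}$ annihilates trigonometric polynomials of degree $<2^{s_j-2}$ (for $s_j\ge2$; for $s_j=1$ it annihilates constants). Hence $\cA_{s_j}$ acting in $x_j$ can be rewritten as an average of $l$-th differences. Concretely, I would use the standard identity
\[
\cA_{s}\ast f=\cA_s\ast (f-\mathcal{V}\ast f)
\]
with a de la Vall\'ee Poussin type kernel $\mathcal V$ of order $\asymp 2^{s}$ reproducing polynomials of low degree, and then bound $\|f-\mathcal V\ast f\|$ by the modulus of smoothness of order $l$ at step $2^{-s}$. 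The cleaner route is the Jackson-type kernel $J_{s}(t)$ with $\int J_s=1$, $\int |t|^{a}|J_s(t)|\,dt\ll 2^{-sa}$ for $a<l$. One forms
\[
f-\sum_{k=1}^{l}(-1)^{k+1}\binom lk \int f(\cdot+kt)J_s(t)\,dt = (-1)^{l}\int \Delta^l_t f\, J_s(t)\,dt ,
\]
where the subtracted part is a polynomial of degree $<2^{s-2}$ in $x_j$ and is therefore killed by $\cA_{s}$. Doing this simultaneously in all coordinates $j\in e$ expresses $A_\bs(f)$ as $\cA_\bs$ convolved with an $|e|$-fold integral of $\Delta^l_\btt(e)f$ against $\prod_{j\in e}J_{s_j}(t_j)$. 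Using $\|\cA_{s}\|_1\le C$, Minkowski's inequality in mixed norms and (\ref{Bi9}) then gives $\|A_\bs(f)\|_\bq\le C\prod_{j\in e}2^{-a_js_j}$, which is (\ref{H1}). The requirement $l>\|\ba\|_\infty$ is what makes the moments of $J_s$ give $2^{-sa_j}$.

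\textbf{Converse part.} Write $f=\sum_\bs A_\bs(f)$, which converges in $L_\bq$ (or uniformly when $q=\infty$) because of (\ref{H1}). For a set $e$ and a step $\btt$, apply $\Delta^l_\btt(e)$ termwise. In each coordinate $j\in e$, split the sum over $s_j$ at $2^{s_j}\approx|t_j|^{-1}$. For small $s_j$, use the Bernstein-type bound $\|\Delta^l_{t}g\|\le C(2^{s}|t|)^l\|g\|$ for polynomials $g$ of degree $\le 2^{s}$ in $x_j$; this is again valid in mixed norms coordinatewise. For large $s_j$, use the trivial bound $\|\Delta^l_t g\|\le 2^l\|g\|$. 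Summing the geometric series over each coordinate separately gives $\prod_{j\in e}|t_j|^{a_j}$ times a constant, where $l>a_j$ makes the low-frequency series converge. In coordinates $j\notin e$ no difference is applied, and summing $2^{-a_js_j}$ over $s_j$ converges since $a_j>0$.

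\textbf{Main obstacle.} I expect the main difficulty to be the direct part. One must verify carefully that the polynomial-reproduction identity can be applied in several variables at once, and that the mixed-norm Minkowski inequality applies to the resulting multiple integrals in the right order of integration. I would handle this by inducting on coordinates, treating one variable at a time with the others frozen.
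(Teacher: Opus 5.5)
The paper gives no proof of Theorem~\ref{H}; it quotes the result from \cite{VTmon} (scalar $q$) and \cite{VT32} (vector $\bq$). Your argument is the standard one and is correct:
\begin{itemize}
\item In the direct part, for each $j\in e$ you write $f$ minus a low-degree polynomial in $x_j$ as a Jackson-kernel average of $l$-th differences. The polynomial part is then annihilated by $\cA_{s_j}$.
\item In the converse, you apply Bernstein's inequality and the trivial bound coordinatewise, then sum $\min\{(2^{s_j}|t_j|)^l,1\}\,2^{-a_js_j}$ over each $s_j$.
\item Both directions carry over to mixed norms by Minkowski's inequality and translation invariance, as you say.
\end{itemize}

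One small correction. The moment bound $\int|t|^{a_j}|J_s(t)|\,dt\ll 2^{-sa_j}$ does not depend on $l$. It comes from your choice of kernel:
\begin{itemize}
\item take a Jackson kernel of power $r$ with $2r-1>\|\ba\|_\infty$ and degree at most $2^{s-2}$;
\item use the constant kernel for the finitely many small $s$.
\end{itemize}
The hypothesis $l>\|\ba\|_\infty$ is needed only in the converse. There it ensures $|t|^l\sum_{2^s\le|t|^{-1}}2^{s(l-a_j)}\ll|t|^{a_j}$.
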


The reader can find results on approximation properties of these classes in the books \cite{VTmon}, \cite{VTbookMA}, and \cite{DTU}.

{\bf Notations for the function classes.} In this paper we consider the case, when $v=2d$, $d\in \bbN$, $\mathbf{1}\le \bq \le \infty$, and $\ba$ has a special form: $a_j = r_1$, $a_{j+d}= r_2$ for $j=1,\dots,d$. In this case we write $\bW^{\br}_\bq = \bW^{(\br^1,\br^2)}_\bq= \bW^{r_1,r_2}_\bq$ and $\bH^{\br,2d}_\bq = \bH^{(\br^1,\br^2),2d}_\bq= \bH^{r_1,r_2,2d}_\bq$, where $\br^i := (r_i,\dots,r_i) \in \bbR^d$, $i=1,2$.  Sometimes for brevity we omit $2d$ in the notation for the $\bH$ classes and write, for instance, 
$\bH^{r_1,r_2}_\bq$ instead of $\bH^{r_1,r_2,2d}_\bq$.

\section{Some known results on sampling recovery}
\label{kr}

 In this paper we study the case, when the asymptotic characteristic is the error of sampling recovery. Recall the setting 
 of the optimal linear recovery introduced in \cite{VT51}. For a fixed $m$ and a set of points  $\xi:=\{\xi^j\}_{j=1}^m\subset \Omega$, let $\Phi $ be a linear operator from $\bbC^m$ into $L_p(\Omega,\mu)$.
Denote for a class $\bF$ (usually, centrally symmetric and compact subset of $L_p(\Omega,\mu)$)
$$
\varrho_m(\bF,L_p) := \inf_{\xi} \inf_{\text{linear}\, \Phi } \sup_{f\in \bF} \|f-\Phi(f(\xi^1),\dots,f(\xi^m))\|_p.
$$
The above described recovery procedure is a linear procedure.

Most of the known results on optimal sampling recovery deal with the linear recovery methods. We now give some very brief comments on recent results in this direction and 
refer the reader to the books \cite{DTU}, \cite{VTbookMA} and to the survey paper \cite{KKLT} for a discussion of the previous results in this direction. We are interested in results, which relate the errors of sampling recovery with the Kolmogorov widths for general function classes. We begin with a result from \cite{VT183}. 

\begin{Theorem}[{\cite{VT183}}]\label{VT183T1} There exist two positive absolute constants $b$ and $B$ such that for any   compact subset $\Omega$  of $\bbR^d$, any probability measure $\mu$ on it, and any compact subset $\bF$ of $\cC(\Omega)$ we have
\be\label{kr1}
\ro_{bn}(\bF,L_2(\Omega,\mu)) \le Bd_n(\bF,L_\infty).
\ee
\end{Theorem}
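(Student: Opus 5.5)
The plan is to reduce the inequality to a weighted least squares recovery operator built on a good discretization of the $L_2(\mu)$ norm on a near-optimal Kolmogorov subspace.

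Fix $\e>0$. Choose a subspace $V\subset\cC(\Omega)$ with $\dim V\le n$ such that
$$
\sup_{f\in\bF}\inf_{u\in V}\|f-u\|_\infty\le d_n(\bF,L_\infty)+\e.
$$
Replace $V$ by $V':=\sp(V\cup\{1\})$, so $\dim V'\le n+1$. Adding the constant function is harmless, and it lets us control the total mass of the weights below.

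\textbf{Key step: a weighted Marcinkiewicz-type discretization with $O(n)$ points.} I will look for points $\xi^1,\dots,\xi^m\in\Omega$ with $m\le bn$ and weights $\la_j\ge 0$ such that
\be\label{planMZ}
\frac12\|u\|_2^2\le \sum_{j=1}^m \la_j|u(\xi^j)|^2\le \frac32\|u\|_2^2,\qquad u\in V'.
\ee
To get (\ref{planMZ}) I would proceed in two stages.
\begin{enumerate}
\item Take an $L_2(\mu)$-orthonormal basis $\{\ff_k\}_{k=1}^{N}$ of $V'$. By compactness and continuity of the $\ff_k$, approximate $\mu$ by a finitely supported measure $\sum_{i}\nu_i\delta_{\bx^i}$ that reproduces the Gram matrix of $\{\ff_k\}$ up to a small error in operator norm. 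Concretely, partition $\Omega$ into small pieces on which every $\ff_k$ is nearly constant.
\item Apply the Batson--Spielman--Srivastava spectral sparsification theorem to the rank-one matrices $\nu_i\,\Phi(\bx^i)\Phi(\bx^i)^*$, where $\Phi(\bx):=(\ff_1(\bx),\dots,\ff_N(\bx))$. This keeps $m\le bN$ of them with new weights $\la_j\ge0$, and the resulting sum of rank-one matrices is spectrally equivalent to the identity. That is exactly (\ref{planMZ}).
\end{enumerate}
The complex case is handled the same way with Hermitian matrices. Applying (\ref{planMZ}) to $u\equiv1$ gives $\sum_j\la_j\le 3/2$.

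\textbf{Recovery operator.} Define $\Phi(f)$ as the weighted least squares fit
$$
\Phi(f):=\arg\min_{u\in V'}\sum_j\la_j|f(\xi^j)-u(\xi^j)|^2 .
$$
This is a linear map of the data $(f(\xi^j))_{j=1}^m$, and it is a projection onto $V'$, so $\Phi(u)=u$ for $u\in V'$. It is the orthogonal projection in the seminorm $\|g\|_\la:=(\sum_j\la_j|g(\xi^j)|^2)^{1/2}$, so $\|\Phi(g)\|_\la\le\|g\|_\la$. Combining this with the lower bound in (\ref{planMZ}) gives
$$
\|\Phi(g)\|_2\le\sqrt2\,\|g\|_\la\le \sqrt2\Bigl(\sum_j\la_j\Bigr)^{1/2}\|g\|_\infty\le\sqrt3\,\|g\|_\infty .
$$

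\textbf{Error estimate.} For $f\in\bF$ let $u\in V$ be a near-best $L_\infty$ approximant of $f$. Then
$$
\|f-\Phi(f)\|_2\le\|f-u\|_2+\|\Phi(f-u)\|_2\le(1+\sqrt3)\|f-u\|_\infty\le(1+\sqrt3)\bigl(d_n(\bF,L_\infty)+\e\bigr).
$$
Letting $\e\to0$ gives (\ref{kr1}) with $B=1+\sqrt3$ and $b$ an absolute constant. We use $bN\le 2bn$ points, which is absorbed into the constant $b$.

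The main obstacle is the discretization step. It needs an absolute constant $b$, uniformly over all $\Omega$ and $\mu$, and the passage from a general measure to a finite atomic one must be justified. The BSS theorem is what delivers a linear number of points with bounded weights. Random sampling would cost an extra $\log n$ factor.
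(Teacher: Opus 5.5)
Your proposal is correct and follows the route the paper outlines for this result. You take a near-optimal subspace, enlarged by the constants to bound the weights (Remark \ref{BR1}), build a weighted $L_2(\mu)$ discretization on $O(n)$ points (Theorem \ref{BT2}), and analyze the weighted least squares operator as in Theorem \ref{BT1} with $p=2$. The differences are cosmetic: you re-derive the discretization from BSS after replacing $\mu$ by an atomic measure, which is legitimate here because $V'\subset\cC(\Omega)$ with $\Omega$ compact (choose the nodes in $\mathrm{supp}\,\mu$ in case $L_2(\mu)$ is only a seminorm on $V'$), and you use that the least squares operator is a contraction in $\|\cdot\|_\la$ to get the constant $1+DW^{1/2}$ in place of $1+2DW^{1/2}$.
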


The following generalization of Theorem \ref{VT183T1} to the case $2<p\le \infty$ was obtained in \cite{KPUU2}. 

\begin{Theorem}[{\cite{KPUU2}}]\label{KPUU2} Let $2\le p\le \infty$. There exists a positive absolute constant $C$ such that for any   compact subset $\Omega$  of $\bbR^d$, any probability measure $\mu$ on it, and any compact subset $\bF$ of $\cC(\Omega)$ we have
\be\label{kr2}
\ro_{4n}(\bF,L_p(\Omega,\mu)) \le Cn^{1/2-1/p}d_n(\bF,L_\infty).
\ee
\end{Theorem}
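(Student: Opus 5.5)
We will construct an explicit linear algorithm: a weighted least squares projection onto a near-optimal Kolmogorov subspace. Fix $n$ and let $X_n\subset\cC(\Omega)$ be a subspace of dimension at most $n$ with $\sup_{f\in\bF}\inf_{g\in X_n}\|f-g\|_\infty\le 2d_n(\bF,L_\infty)$. For $f\in\bF$ let $g_f\in X_n$ be a near-best uniform approximant. If we find points $\xi^1,\dots,\xi^{4n}$ and a linear operator $P:\bbC^{4n}\to X_n$ that reproduces $X_n$ (i.e. $P(g(\xi^1),\dots,g(\xi^{4n}))=g$ for $g\in X_n$), then
$$
\|f-P(f(\xi))\|_p\le \|f-g_f\|_p+\|P((f-g_f)(\xi))\|_p ,
$$
so it suffices to prove the operator bound $\|P(h(\xi))\|_{L_p(\mu)}\le Cn^{1/2-1/p}\|h\|_\infty$ for all $h\in\cC(\Omega)$. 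By the interpolation (H\"older) inequality $\|u\|_p\le\|u\|_2^{2/p}\|u\|_\infty^{1-2/p}$, this reduces to the two estimates $\|P(h(\xi))\|_{L_2(\mu)}\le C\|h\|_\infty$ and $\|P(h(\xi))\|_\infty\le Cn^{1/2}\|h\|_\infty$.

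Both estimates come from a good auxiliary measure. By the Kiefer--Wolfowitz equivalence theorem (equivalently, a John/Lewis-type argument), there is a probability measure $\nu$ on $\Omega$ such that, for an $L_2(\nu)$-orthonormal basis $\{u_i\}$ of $X_n$, we have $\sup_x\sum_i|u_i(x)|^2\le n$. This gives $\|g\|_\infty\le n^{1/2}\|g\|_{L_2(\nu)}$ on $X_n$. Set $\lambda:=(\mu+\nu)/2$. Since $\|g\|_{L_2(\nu)}^2\le 2\|g\|_{L_2(\lambda)}^2$, the Christoffel function of $X_n$ in $L_2(\lambda)$ is at most $2n$, and also $\|g\|_{L_2(\mu)}\le\sqrt2\|g\|_{L_2(\lambda)}$. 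Next we discretize $\lambda$ on $X_n$. First, since $X_n$ is finite dimensional and its elements are continuous on compact $\Omega$, we replace $\lambda$ by a finitely supported measure whose quadratic form on $X_n$ is equivalent up to a factor close to $1$. Then we apply the Batson--Spielman--Srivastava sparsification to the rank-one decomposition of the Gram matrix. This yields $4n$ points $\xi^j$ and weights $w_j\ge0$ with
$$
c_1\|g\|_{L_2(\lambda)}^2\le\sum_j w_j|g(\xi^j)|^2\le c_2\|g\|_{L_2(\lambda)}^2,\qquad g\in X_n,
$$
where $c_2/c_1\le 9+o(1)$ when the oversampling factor is $4$. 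We define $P$ as the weighted least squares operator: $P(y)$ minimizes $\sum_j w_j|g(\xi^j)-y_j|^2$ over $g\in X_n$. It is linear and reproduces $X_n$.

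For the operator bounds, note that $P$ is the orthogonal projection in the discrete weighted $\ell_2$ inner product. Hence $\|P(h(\xi))\|_{L_2(\lambda)}\le c_1^{-1/2}\bigl(\sum_j w_j|h(\xi^j)|^2\bigr)^{1/2}\le c_1^{-1/2}\bigl(\sum_j w_j\bigr)^{1/2}\|h\|_\infty$. Combining this with the two comparisons above gives both the $L_2(\mu)$ bound and the $L_\infty$ bound with factor $\sqrt{2n}$, provided $\sum_j w_j\le C$. The interpolation step then finishes the proof.

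The main obstacle is controlling the total weight $\sum_jw_j$ and keeping exactly $4n$ points. My first attempt is to enlarge $X_n$ by the constant function, so that $\sum_j w_j=\sum_j w_j|1|^2\le c_2$ automatically. The cost is working with dimension $n+1$, which forces a slight readjustment of the BSS parameters. Failing that, I would use a weighted BSS variant that also tracks the trace. A secondary technical point is justifying the initial finite discretization of $\lambda$ and the existence of the optimal design on a general compact $\Omega$. I expect both to follow from compactness and continuity, after perturbing the equivalence constants slightly.
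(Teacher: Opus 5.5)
Your argument is correct. The paper does not prove this statement; it is quoted from \cite{KPUU2}. The paper only sketches the $p=\infty$ analogue (\ref{kr3}), with an unspecified absolute oversampling constant: take $\mu$ itself to be the Kiefer--Wolfowitz measure of $X_N$, discretize $L_2(\mu)$ on $X_N+\bbC$ by Theorem \ref{BT2} and Remark \ref{BR1}, and apply the weighted least squares estimate of Theorem \ref{BT1a}. Your $L_\infty$ bound is essentially that chain.

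Your additional ingredients are exactly what the general statement requires.
\begin{itemize}
\item For $p<\infty$ the measure $\mu$ is prescribed, so it cannot be taken to be the optimal design. Your mixture $\lambda=(\mu+\nu)/2$ lets one discretization control both $\|\cdot\|_{L_2(\mu)}$ and $\|\cdot\|_\infty$ on the subspace.
\item H\"older interpolation between these two bounds produces the factor $n^{1/2-1/p}$.
\item BSS (Theorem \ref{BSS}) gives the explicit count $4n$, which Theorem \ref{BT2} with its $C_1(N+1)$ points does not. The transfer from $\lambda$ to a finitely supported measure can be made exact by Carath\'eodory, since the Gram matrix of $\lambda$ lies in the convex hull of the compact set $\{v(x)v(x)^*: x\in\Omega\}$.
\end{itemize}

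A few small points should be fixed when you write it out.
\begin{itemize}
\item Take the Kiefer--Wolfowitz design for $X_n+\operatorname{span}\{1\}$, not for $X_n$. Alternatively, project onto $X_n$ only and use the constant function solely to bound $\sum_j w_j$. Otherwise the Nikol'skii bound $\|g\|_\infty\le C\sqrt{n}\,\|g\|_{L_2(\lambda)}$ need not hold on the range of $P$.
\item With dimension $n+1$, BSS with oversampling factor $2$ uses $2(n+1)\le 4n$ points and gives $c_2/c_1\le(\sqrt2+1)^4$. So the readjustment you anticipate is harmless and, in fact, explains the $4n$.
\item In the complex case apply the BSS argument directly to complex vectors; it goes through verbatim for Hermitian matrices. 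Do not pass to real and imaginary parts, which doubles the dimension and breaks the $4n$ count for $n=1$.
\item Use $d_n+\delta$ with $\delta\to 0$ instead of $2d_n$, to cover the case $d_n=0$.
\end{itemize}
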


In our applications the following analog of the inequality (\ref{kr1}), which is contained in Theorem \ref{KPUU2}, 
\be\label{kr3}
\ro_{bn}(\bF,L_\infty) \le Bn^{1/2}d_n(\bF,L_\infty)
\ee
plays a fundamental role. For this reason, we now present a detailed discussion of this inequality. 

Let as above $\Omega$ be a compact subset of $\bbR^d$ and $X_N$ be an $N$-dimensional subspace of the space of continuous functions
$\cC(\Omega)$.   Given
a fixed $m$ and a set of points $\xi^1,\ldots,\xi^m\in\Omega$, we 
associate with a function $f\in \cC(\Omega)$ a vector (sample vector)
$$
S(f,\xi) := (f(\xi^1),\dots,f(\xi^m)) \in \bbC^m.
$$
We also consider the discrete norms
$$
\|S(f,\xi)\|_p:= \left(\frac{1}{m}\sum_{j=1}^m |f(\xi^j)|^p\right)^{1/p},\quad 1\le p<\infty,
$$
and $\|S(f,\xi)\|_\infty := \max_{j}|f(\xi^j)|$.

For a positive weight $\bw:=(w_1,\dots,w_m)\in \bbR^m$ consider the following seminorm
$$
\|S(f,\xi)\|_{p,\bw}:= \left(\sum_{j=1}^m w_j |f(\xi^j)|^p\right)^{1/p},\quad 1\le p<\infty.
$$
Define the best approximation of $f\in L_p(\Omega,\mu)$, $1\le p\le \infty$, by elements of $X_N$ as follows
$$
d(f,X_N)_p := \inf_{u\in X_N} \|f-u\|_p.
$$

Theorem~\ref{BT1} below was proved in \cite{VT183} under the following assumptions.

{\bf A1. Discretization.} Let $1\le p\le \infty$. Suppose that
$\xi:=\{\xi^j\}_{j=1}^m\subset \Omega$ provides the following discretization property: For any 
$u\in X_N$ in the case $p<\infty$ we have
$$
\|u\|_p \le D \|S(u,\xi)\|_{p,\bw}  
$$
and in the case $p=\infty$ we have
$$
\|u\|_\infty \le D \|S(u,\xi)\|_{\infty}  
$$
with some positive constant $D$.

{\bf A2. Weights.} Suppose that there is a positive constant
$W$ such that 
$\sum_{j=1}^m w_j \le W$.

Consider the following well known recovery operator (algorithm)  
$$
\ell p\bw(\xi)(f) := \ell p\bw(\xi,X_N)(f):=\text{arg}\min_{u\in X_N}
\|S(f-u,\xi)\|_{p,\bw},\quad 1\le p<\infty,
$$
$$
\ell \infty(\xi)(f) := \ell \infty(\xi,X_N)(f):=\text{arg}\min_{u\in X_N}
\|S(f-u,\xi)\|_{\infty}.
$$
Note that the above algorithm $\ell p\bw(\xi)$ only uses the function values $f(\xi^j)$, $j=1,\dots,m$. In the case $p=2$ it is a linear algorithm -- orthogonal projection with respect 
to the  {seminorm} $\|\cdot\|_{2,\bw}$. Therefore, in the case $p=2$ the approximation error in the $L_q$ norm by the algorithm $\ell 2\bw(\xi)$ gives an upper bound for the recovery characteristic $\ro_m(\cdot, L_q)$. 

\begin{Theorem}[{\cite[Theorem~2.1]{VT183}}]\label{BT1}  Under assumptions {\bf A1} and {\bf A2} for any $f\in \cC(\Omega)$ we have
for $1\le p<\infty$
$$
\|f-\ell p\bw(\xi)(f)\|_p \le (2DW^{1/p} +1)d(f, X_N)_\infty.
$$
Under assumption {\bf A1} for any $f\in \cC(\Omega)$ we have
$$
\|f-\ell \infty(\xi)(f)\|_\infty \le (2D+1)d(f, X_N)_\infty.
$$
\end{Theorem}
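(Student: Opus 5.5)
Let $u^*\in X_N$ be a best uniform approximant of $f$, so that $\|f-u^*\|_\infty = d(f,X_N)_\infty$; such an element exists because $X_N$ is finite dimensional. (If one prefers to avoid existence questions, take a near-best element and let the slack tend to zero.) Write $u:=\ell p\bw(\xi)(f)$. The plan is to split with the triangle inequality,
$$
\|f-u\|_p \le \|f-u^*\|_p + \|u^*-u\|_p \le d(f,X_N)_\infty + \|u^*-u\|_p ,
$$
using that $\mu$ is a probability measure, so $\|g\|_p\le \|g\|_\infty$. This accounts for the summand $1$ in the constant. The remaining term involves $u^*-u\in X_N$, so assumption {\bf A1} applies to it and gives
$$
\|u^*-u\|_p \le D\,\|S(u^*-u,\xi)\|_{p,\bw}.
$$

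Next I bound the discrete seminorm. By the triangle inequality for the seminorm $\|\cdot\|_{p,\bw}$,
$$
\|S(u^*-u,\xi)\|_{p,\bw} \le \|S(f-u^*,\xi)\|_{p,\bw}+\|S(f-u,\xi)\|_{p,\bw}.
$$
Since $u$ minimizes $\|S(f-v,\xi)\|_{p,\bw}$ over $v\in X_N$ and $u^*\in X_N$, the second term is at most the first. Hence
$$
\|S(u^*-u,\xi)\|_{p,\bw}\le 2\|S(f-u^*,\xi)\|_{p,\bw}.
$$
By {\bf A2},
$$
\|S(g,\xi)\|_{p,\bw} = \Big(\sum_j w_j|g(\xi^j)|^p\Big)^{1/p} \le W^{1/p}\|g\|_\infty .
$$
Combining these steps yields
$$
\|f-u\|_p\le (2DW^{1/p}+1)\,d(f,X_N)_\infty .
$$
If the argmin is not unique, the argument works for any minimizer. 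If a minimizer might not exist, note that it does here: the seminorm minimization is over a finite-dimensional space, so the minimum is attained.

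The case $p=\infty$ is identical. Replace $\|\cdot\|_{p,\bw}$ by $\|S(\cdot,\xi)\|_\infty$ and use the $L_\infty$ form of {\bf A1}. The trivial bound $\|S(g,\xi)\|_\infty\le\|g\|_\infty$ replaces {\bf A2}, so no weights are needed and the constant is $2D+1$.

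There is no real obstacle. The only points needing care are that $\|\cdot\|_{p,\bw}$ is merely a seminorm, which is harmless since only the triangle inequality is used, and that the comparison $\|g\|_p\le\|g\|_\infty$ relies on $\mu$ being a probability measure.
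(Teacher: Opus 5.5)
Your proof is correct and follows essentially the same route as the paper's proof of Theorem \ref{BT1a}, which the paper says goes along the lines of the proof of Theorem \ref{BT1}. Both pass through an element of $X_N$ by the triangle inequality, apply {\bf A1} to a difference lying in $X_N$, use the minimizing property of $\ell p\bw(\xi)(f)$ to get the factor $2$, and use {\bf A2} (or the trivial bound $\|S(g,\xi)\|_\infty\le\|g\|_\infty$ when $p=\infty$) to return to the uniform norm. The only difference is cosmetic: you fix a best uniform approximant $u^*$ at the start, whereas the paper runs the chain for an arbitrary $g\in X_N$ and minimizes over $g$ at the end, which avoids the existence question.
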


The following version of Theorem~\ref{BT1} for the error of $\|f-\ell p\bw(\xi)(f)\|_{\infty}$ under an extra condition on the Nikol'skii inequality for the $X_N$ was proved in \cite{LMT}. For completeness we present that proof here. For the reader's convenience we recall the classical definition of the Nikol'skii inequality. 

{\bf Nikol'skii-type inequalities.} Let $1\le p\le q\le\infty$ and $X_N\subset L_q(\Omega, \mu)$. The inequality
\begin{equation}\label{I4}
\|f\|_q \leq M\|f\|_p,\   \ \forall f\in X_N
\end{equation}
is called the Nikol'skii inequality for the pair $(p,q)$ with the constant $M$. 
We will also use the brief form of this fact: $X_N \in NI(p,q,M)$. Typically, $M$ depends on $N$, for instance, $M$ can be of order $N^{\frac{1}{p}-\frac{1}{q}}$.

\begin{Theorem}[{\cite{LMT}}]\label{BT1a} Let $1\le p<\infty$.  Under assumptions {\bf A1}, {\bf A2}, and an extra assumption $X_N\in NI(p,\infty,M)$   for any $f\in \cC(\Omega)$ we have
 $$
\|f-\ell p\bw(\xi)(f)\|_{\infty} \le (2MD W^{1/p} +1)d(f, X_N)_\infty.
$$
 \end{Theorem}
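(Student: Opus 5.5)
Let $u:=\ell p\bw(\xi)(f)\in X_N$ and let $v\in X_N$ be a best uniform approximant of $f$, so that $\|f-v\|_\infty=d(f,X_N)_\infty$; such a $v$ exists because $X_N$ is finite dimensional. Write
$$
f-u=(f-v)+(v-u).
$$
The first term is bounded by $d(f,X_N)_\infty$ in the uniform norm, which accounts for the ``$+1$'' in the constant. The second term lies in $X_N$, and for it I will pass from $L_\infty$ to $L_p$ and then to the discrete norm. The Nikol'skii assumption $X_N\in NI(p,\infty,M)$ gives $\|v-u\|_\infty\le M\|v-u\|_p$. Assumption {\bf A1} then gives $\|v-u\|_p\le D\|S(v-u,\xi)\|_{p,\bw}$.

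Next I bound the discrete seminorm by the triangle inequality for $\|\cdot\|_{p,\bw}$:
$$
\|S(v-u,\xi)\|_{p,\bw}\le \|S(f-v,\xi)\|_{p,\bw}+\|S(f-u,\xi)\|_{p,\bw}.
$$
Since $u$ minimizes $\|S(f-\cdot,\xi)\|_{p,\bw}$ over $X_N$ and $v\in X_N$, the second summand is at most the first. Hence $\|S(v-u,\xi)\|_{p,\bw}\le 2\|S(f-v,\xi)\|_{p,\bw}$.

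Finally, {\bf A2} gives
$$
\|S(g,\xi)\|_{p,\bw}\le \Bigl(\sum_{j=1}^m w_j\Bigr)^{1/p}\|g\|_\infty\le W^{1/p}\|g\|_\infty
$$
for any continuous $g$. Applying this with $g=f-v$ and chaining the estimates yields
$$
\|v-u\|_\infty\le 2MDW^{1/p}d(f,X_N)_\infty.
$$
Adding the bound for $f-v$ gives the claimed constant $2MDW^{1/p}+1$.

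There is no real obstacle; the only step needing care is using the Nikol'skii inequality on $v-u$, which is legitimate only because $v-u\in X_N$.
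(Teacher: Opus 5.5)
Your proof is correct and follows the paper's argument step for step. The chain is the same: triangle inequality, the Nikol'skii inequality applied to $v-u\in X_N$, assumption {\bf A1}, the minimizing property of $u$, and then {\bf A2}. The only cosmetic difference is that you fix a best uniform approximant $v$ (which exists since $\dim X_N<\infty$). The paper instead runs the same chain for an arbitrary $g\in X_N$ and takes the infimum at the end, which avoids needing a best approximant to exist.
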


\begin{proof} The proof is simple and goes along the lines of the proof of Theorem~\ref{BT1}. Let $u:= \ell p\bw(\xi)(f)$. For an arbitrary $g \in X_N$ we have the following chain of inequalities.
$$
\|f-u\|_\infty \le \|f-g\|_\infty + \|g-u\|_\infty \le \|f-g\|_\infty +M\|g-u\|_p
$$
$$
\le  \|f-g\|_\infty + MD\|S(g-u,\xi)\|_{p,\bw}
$$
$$
\le  \|f-g\|_\infty + MD(\|S(f-g,\xi)\|_{p,\bw}+ \|S(f-u,\xi)\|_{p,\bw})
$$
$$
\le  \|f-g\|_\infty + 2MD\|S(f-g,\xi)\|_{p,\bw}
$$
$$
\le  \|f-g\|_\infty + 2MDW^{1/p}\|S(f-g,\xi)\|_{\infty} \le (1+ 2MDW^{1/p})\|f-g\|_\infty.
$$
Minimizing over $g\in X_N$, we complete the proof. 

\end{proof}

We now explain how to derive inequality (\ref{kr3}) from Theorem \ref{BT1a}. For a given function class $\bF \subset \cC(\Omega)$ and any $\delta>0$ find a subspace $X_N := X_N^\delta$ such that for any $f\in \bF$ we have
\be\label{kr4}
d(f, X_N)_\infty \le d_N(\bF,L_\infty) +\delta.
\ee
We want to apply Theorem \ref{BT1a} to the subspace $X_N$. We will do that for $p=2$. For that we need to check that the conditions of that theorem are satisfied. Namely, assumptions {\bf A1}, {\bf A2}, and the assumption $X_N\in NI(p,\infty,M)$. To satisfy those conditions we can choose the measure $\mu$, points $\xi^1, \dots, \xi^m$, and weights $\bw$. We begin with the measure $\mu$. We use the following fundamental result of 
  J. Kiefer and J. Wolfowitz \cite{KW}, which
 guarantees that for any finite
dimensional subspace $X_N$ of $\cC(\Omega)$ there exists a probability measure
$\mu$ on $\Omega$ such that for all $f\in X_N$ we have
\be\label{KW}
\|f\|_\infty \le N^{1/2}\|f\|_{L_2(\Omega,\mu)}.
\ee
In other words, for any subspace $X_N$ of $\cC(\Omega)$ we have $X_N \in
NI(2,\infty, N^{1/2})$ with some probability measure $\mu$. We take this measure $\mu$ and solve the discretization problem for the $L_2(\Omega,\mu)$ norm on the subspace $X_N$. 

We  use a result on discretization  from \cite{LT} (see Theorem 3.3 there), which is a generalization to the complex case of an earlier result from \cite{DPSTT2} established for the real case. 

\begin{Theorem}[{\cite{LT}}]\label{BT2} If $X_N$ is an $N$-dimensional subspace of the complex $L_2(\Omega,\mu)$, then there exist three absolute positive constants $C_1$, $c_0$, $C_0$,  a set of $m\leq   C_1N$ points $\xi^1,\ldots, \xi^m\in\Omega$, and a set of nonnegative  weights $\lambda_j$, $j=1,\ldots, m$,  such that
\be\label{kr5}
c_0\|f\|_2^2\leq  \sum_{j=1}^m \lambda_j |f(\xi^j)|^2 \leq  C_0\|f\|_2^2,\  \ \forall f\in X_N.
\ee
\end{Theorem}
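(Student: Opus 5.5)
The plan is to reduce the statement to a spectral sparsification problem for a finite sum of rank-one Hermitian matrices, and then to invoke the Batson--Spielman--Srivastava (BSS) theorem.

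Let $u_1,\dots,u_N$ be an orthonormal basis of $X_N$ in $L_2(\Omega,\mu)$. For definiteness, fix pointwise representatives, which are continuous in the setting of the paper. Put $U(\omega):=(u_1(\omega),\dots,u_N(\omega))^T\in\bbC^N$. For $f=\sum_i c_iu_i$ we have $f(\omega)=\langle \bar{\mathbf c}, \cdot\rangle$-type pairing, and more concretely $|f(\omega)|^2=\bar{\mathbf c}^*\,U(\omega)U(\omega)^*\,\bar{\mathbf c}$ up to conjugation conventions. Moreover $\|f\|_2^2=|\mathbf c|^2$. Orthonormality is then exactly the identity
$$
\int_\Omega U(\omega)U(\omega)^*\,d\mu(\omega)=I_N .
$$
Consequently, (\ref{kr5}) is equivalent to finding points $\xi^j$ and weights $\lambda_j\ge 0$ with
$$
c_0 I_N\le \sum_{j=1}^m\lambda_j U(\xi^j)U(\xi^j)^*\le C_0 I_N
$$
in the Loewner order, with $m\le C_1N$.

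\textbf{Step 1: pass to a finite sum.} The matrices $U(\omega)U(\omega)^*$ lie in the real vector space of Hermitian $N\times N$ matrices, which has dimension $N^2$. The mean of a probability measure (with finite first moment) supported on a subset of a finite-dimensional space lies in the convex hull of that subset itself, not merely in its closure; this is proved by induction on dimension using supporting hyperplanes. Hence $I_N$ lies in the convex hull of $\{U(\omega)U(\omega)^*:\omega\in\Omega\}$. Carathéodory's theorem then gives at most $N^2+1$ points $\omega_k$ and numbers $t_k\ge 0$ such that
$$
I_N=\sum_k t_k\,U(\omega_k)U(\omega_k)^*.
$$
This yields an exact weighted discretization, but with $O(N^2)$ points.

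\textbf{Step 2: sparsify.} Put $v_k:=t_k^{1/2}U(\omega_k)$, so that $\sum_k v_kv_k^*=I_N$. Apply the BSS theorem with a fixed parameter, for instance $d=4$. It gives a subset of at most $4N$ indices and weights $s_k\ge0$ such that
$$
I_N\le\sum_k s_kv_kv_k^*\le \Bigl(\tfrac{\sqrt d+1}{\sqrt d-1}\Bigr)^2 I_N=9I_N .
$$
Setting $\xi^j=\omega_k$ and $\lambda_j=s_kt_k$ over the selected indices gives (\ref{kr5}) with $C_1=4$, $c_0=1$ and $C_0=9$.

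\textbf{Main obstacle.} The main point to check is that BSS holds in the complex Hermitian setting. Its proof uses barrier potentials $\operatorname{tr}(uI-A)^{-1}$ and $\operatorname{tr}(A-\ell I)^{-1}$, together with the Sherman--Morrison formula for rank-one updates $A+tvv^*$. Each of these is valid verbatim for Hermitian matrices, since it relies only on the spectral theorem and on $v^*Mv$ being real for Hermitian $M$. I would therefore re-run the BSS averaging step to confirm that some index always admits an admissible weight, and this is the step that generalizes the real-case result of \cite{DPSTT2}.

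A secondary technical point concerns Step 1. If the $u_i$ are only defined $\mu$-almost everywhere, one should restrict to a full-measure set on which the chosen representatives satisfy the integral identity. The convex-hull argument is then applied to the image of that set.
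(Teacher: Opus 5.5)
Your argument is correct, and it follows a different route from the one the paper relies on. The paper does not prove this statement. It imports it from \cite{LT} as the complex analogue of Theorem~\ref{DPSTT} of \cite{DPSTT2}. As the historical comments indicate, that result rests on the discrete BSS Theorem~\ref{BSS} plus a separate passage from a general $\mu$ to a discrete measure; this passage is where the hypothesis $X_N\subset L_4$ of \cite{DPTT} had to be removed. The count $\lceil b(2n+1)\rceil$ in the complex case of Theorem~\ref{SRb} suggests that the complex case was obtained from the real one by applying it to $\operatorname{Re} f$ and $\operatorname{Im} f$. These lie in a real subspace of dimension at most $2N$, so that route doubles the number of points. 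You handle both issues directly. Your convex-hull/Carath\'eodory step is a Tchakaloff-type exact positive cubature for the at most $N^2$-dimensional span of the $u_i\overline{u_j}$, and it needs nothing beyond $X_N\subset L_2$. Running BSS over $\bbC$ then avoids the dimension doubling. You in fact get $m\le\lceil bN\rceil$ for every $b>1$, with $c_0=1$ and $C_0=(\sqrt b+1)^2/(\sqrt b-1)^2$, which is the same form as the real Theorem~\ref{DPSTT}. The price is that you must verify BSS over $\bbC$ rather than use the real result as a black box. That check does go through. The barrier-shift lemmas use only the Sherman--Morrison formula for $A+tvv^*$ and the reality of $v^*Mv$ for Hermitian $M$. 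The averaging step uses only $\sum_k v_k^*Mv_k=\operatorname{tr}M$ when $\sum_k v_kv_k^*=I_N$. One cosmetic correction: for $f=\sum_i c_iu_i$ the relevant rank-one matrix is $\overline{U(\omega)}\,U(\omega)^T$, since $|f(\omega)|^2=\mathbf{c}^*\,\overline{U(\omega)}\,U(\omega)^T\mathbf{c}$. This matrix is the entrywise conjugate of $U(\omega)U(\omega)^*$, it also integrates to $I_N$, and conjugation preserves the Loewner order, so nothing else changes.
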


For our application we need to satisfy the assumption {\bf A2} on weights. We use the following remark from \cite{VT183}.
\begin{Remark}[{\cite{VT183}}]\label{BR1} Considering a new subspace $X_N' := \{f\,:\, f= g+c, \, g\in X_N,\, c\in \bbC\}$ and applying Theorem \ref{BT2} to 
the $X_N'$ with $f=1$ ($g=0$, $c=1$) we conclude that a version of Theorem \ref{BT2} holds with the inequality $m\le C_1N$ replaced by $m\le C_1(N+1)$ and with weights satisfying 
$$
\sum_{j=1}^m \lambda_j \le C_0.
$$
\end{Remark}

We apply Theorem \ref{BT1a} with $p=2$, $M=N^{1/2}$, $D= c_0^{-1/2}$, $\bw=(\la_1,\dots,\la_m)$, $W=C_0^{1/2}$ and obtain 
\be\label{kr6}
\ro_{C_1(N+1)}(\bF,L_\infty) \le (2N^{1/2}(C_0/c_0)^{1/2}+1)(d_N(\bF,L_\infty) +\delta),
\ee
which implies (\ref{kr3}). 

In the inequality (\ref{kr3}) we only say that the parameter $b$ can be chosen as an absolute constant. There are results on the inequality (\ref{kr1}) with $b$ being arbitrarily close to $1$. The first result in that direction was proved in \cite{LT}. 

\begin{Theorem}[{\cite{LT}}]\label{SRb}  For any $b\in(1,2]$ there exists a positive constant $B=B(b)$ such that for any   compact subset $\Omega$  of $\bbR^d$, any probability measure $\mu$ on it, and any compact subset $\bF$ of $\cC(\Omega)$ we have in the real case
$$
\ro_{\lceil b(n+1) \rceil}(\bF,L_2(\Omega,\mu)) \le Bd_n(\bF,L_\infty)
$$
and in the complex case
$$
\ro_{\lceil b(2n+1) \rceil}(\bF,L_2(\Omega,\mu)) \le Bd_n(\bF,L_\infty).
$$
\end{Theorem}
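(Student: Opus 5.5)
We follow the route used above to derive (\ref{kr3}) from Theorem \ref{BT1a}, with two changes. First, the measure $\mu$ is now given, so we use Theorem \ref{BT1} with $p=2$ and need no Nikol'skii inequality or Kiefer--Wolfowitz. Second, the discretization Theorem \ref{BT2} is replaced by a sharper one: the number of points is $\lceil b\cdot\dim\rceil$ and the constants depend only on $b$.

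Fix $\delta>0$ and choose an $n$-dimensional subspace $X_n\subset\cC(\Omega)$ such that $d(f,X_n)_\infty\le d_n(\bF,L_\infty)+\delta$ for all $f\in\bF$. The constants $B$ will not depend on $\delta$, so we let $\delta\to0$ at the end.

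In the real case, set $X' := X_n+\sp\{1\}$, so $\dim X'\le n+1$. In the complex case, let $X'$ be the real linear span of $\{\operatorname{Re} u,\operatorname{Im} u: u\in X_n\}$ together with the constant function $1$. Then $\dim_{\bbR}X'\le 2n+1$. The real space $X'$ still controls the complex space $X_n$, because $|f|^2=(\operatorname{Re}f)^2+(\operatorname{Im}f)^2$ and a two-sided weighted $L_2$ discretization on $X'$ passes to $X_n$ coordinatewise. This explains the $2n+1$.

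The key step is a weighted Marcinkiewicz-type discretization with nearly optimal cardinality. For a real $N$-dimensional $X'\subset\cC(\Omega)$, we want $m\le\lceil bN\rceil$ points $\xi^j$ and weights $\la_j\ge0$ with
$$
c_0(b)\|f\|_2^2\le\sum_{j=1}^m\la_j f(\xi^j)^2\le C_0(b)\|f\|_2^2,\qquad f\in X'.
$$
To get this, take an $L_2(\mu)$-orthonormal basis $u_1,\dots,u_N$ of $X'$ and set $\bu(x)=(u_1(x),\dots,u_N(x))$. Then $\int_\Omega \bu(x)\bu(x)^T\,d\mu=I_N$.

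Step (i): replace $\mu$ by a finitely supported measure. By Carath\'eodory's theorem in the finite-dimensional space of symmetric matrices, together with continuity of $\bu$ and compactness of $\Omega$, we get $\sum_k \nu_k\bu(x^k)\bu(x^k)^T$ within factor $1\pm\e$ of $I_N$ in the Loewner order.

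Step (ii): apply the Batson--Spielman--Srivastava spectral sparsification theorem to these rank-one matrices. It yields at most $\lceil bN\rceil$ of them, with new nonnegative weights, whose sum $A$ satisfies $I\preceq A\preceq \kappa(b)I$, where $\kappa(b)=\big((\sqrt b+1)/(\sqrt b-1)\big)^2$.

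Together these give the displayed inequality. Testing it on $f=1\in X'$ gives $\sum_j\la_j\le C_0(b)$, which is assumption {\bf A2} (as in Remark \ref{BR1}). Assumption {\bf A1} holds with $D=c_0(b)^{-1/2}$.

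Finally, apply Theorem \ref{BT1} with $p=2$ to the subspace $X_n$ using these points and weights. The discretization holds on $X'\supset X_n$, so {\bf A1} holds on $X_n$. In the complex case we use the complex least squares fit onto $X_n$ with the weighted seminorm, which is still linear in the samples. Theorem \ref{BT1} gives, for all $f\in\bF$,
$$
\|f-\ell2\bw(\xi)(f)\|_2\le \bigl(2c_0^{-1/2}C_0^{1/2}+1\bigr)\bigl(d_n(\bF,L_\infty)+\delta\bigr).
$$
Since $\ell2\bw(\xi)$ is a linear algorithm using $m\le\lceil b(n+1)\rceil$ (respectively $\lceil b(2n+1)\rceil$) samples, this bounds $\ro_m$. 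Letting $\delta\to0$ completes the argument.

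The main obstacle is the discretization step: obtaining cardinality $b\cdot\dim$ rather than $C_1\dim$, with constants depending only on $b$. This rests on the BSS barrier-function argument and on the reduction from a general measure to a finite atomic one.
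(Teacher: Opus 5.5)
Your proposal is correct and follows the route implicit in the paper, which cites \cite{LT} for this statement rather than proving it. The chain is the same: a nearly optimal subspace from the definition of $d_n(\bF,L_\infty)$, the constant-function trick of Remark \ref{BR1} (with real and imaginary parts in the complex case, which is exactly where $2n+1$ comes from), a weighted $L_2(\mu)$ discretization with $\lceil b\cdot\dim X'\rceil$ nodes and constants depending only on $b$, and then Theorem \ref{BT1} with $p=2$.

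The one difference is that, where the paper would invoke Theorem \ref{DPSTT}, you derive the discretization yourself from BSS via a Carath\'eodory reduction to a finitely supported measure. This is legitimate here because $X'\subset\cC(\Omega)$ with $\Omega$ compact, so the unbounded case that \cite{DPSTT2} had to handle never arises. You should take the nodes in $\operatorname{supp}\mu$, so that elements of $X'$ vanishing $\mu$-a.e.\ do not spoil the lower bound.
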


In the same way as we obtained above an analog (\ref{kr3}) of the original inequality (\ref{kr1}) we can obtain the following 
analog of Theorem \ref{SRb}. 

\begin{Theorem}\label{krT1}  For any $b\in(1,2]$ there exists a positive constant $B=B(b)$ such that for any   compact subset $\Omega$  of $\bbR^d$  and any compact subset $\bF$ of $\cC(\Omega)$ we have in the real case
$$
\ro_{\lceil b(n+1) \rceil}(\bF,L_\infty) \le Bn^{1/2}d_n(\bF,L_\infty)
$$
and in the complex case
$$
\ro_{\lceil b(2n+1) \rceil}(\bF,L_\infty)) \le Bn^{1/2}d_n(\bF,L_\infty).
$$
\end{Theorem}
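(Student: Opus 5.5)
We obtain Theorem \ref{krT1} in the same way as (\ref{kr3}) was derived, replacing Theorem \ref{BT2} by the sharper discretization result behind Theorem \ref{SRb}. Fix $n$, a compact class $\bF\subset\cC(\Omega)$ and $\delta>0$. Choose an $n$-dimensional subspace $X_n=X_n^\delta\subset\cC(\Omega)$ such that $d(f,X_n)_\infty\le d_n(\bF,L_\infty)+\delta$ for all $f\in\bF$, as in (\ref{kr4}). By the Kiefer--Wolfowitz theorem (\ref{KW}) there is a probability measure $\mu$ on $\Omega$ with $X_n\in NI(2,\infty,n^{1/2})$ with respect to $L_2(\Omega,\mu)$. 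The measure in the statement is not prescribed, because the target norm is $L_\infty$, so we are free to use this $\mu$.

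Next we need points and weights satisfying {\bf A1} and {\bf A2} for $p=2$, with the number of points $m=\lceil b(n+1)\rceil$ in the real case and $m=\lceil b(2n+1)\rceil$ in the complex case. This is the ingredient used in \cite{LT} to prove Theorem \ref{SRb}. Its proof rests on a weighted $L_2$ discretization with $m\le \lceil b(\dim+1)\rceil$ points (real case) and constants depending only on $b$, obtained from the Batson--Spielman--Srivastava type sparsification. As in Remark \ref{BR1}, we apply it to the subspace $X_n'=X_n+\bbC$ rather than $X_n$. Testing the lower and upper inequalities on $f=1$ then gives $\sum_j\lambda_j\le C_0(b)$, and the Marcinkiewicz-type inequality gives {\bf A1} with $D=c_0(b)^{-1/2}$ and {\bf A2} with $W=C_0(b)$. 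We also need the Nikol'skii inequality for $X_n'$. Taking the Kiefer--Wolfowitz measure for $X_n'$ (dimension $n+1$) gives $M=(n+1)^{1/2}\le 2n^{1/2}$.

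In the complex case a complex subspace of dimension $n+1$ is a real space of dimension $2n+2$. This is exactly why Theorem \ref{SRb} contains $\lceil b(2n+1)\rceil$ there, and we take the corresponding version of the discretization result.

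Finally, apply Theorem \ref{BT1a} with $p=2$. The operator $\ell2\bw(\xi,X_n')$ is linear and uses only the $m$ values $f(\xi^j)$, and $d(f,X_n')_\infty\le d(f,X_n)_\infty$. This gives, for every $f\in\bF$,
$$
\|f-\ell2\bw(\xi)(f)\|_\infty\le \bigl(2(n+1)^{1/2}(C_0(b)/c_0(b))^{1/2}+1\bigr)\bigl(d_n(\bF,L_\infty)+\delta\bigr).
$$
Hence $\ro_m(\bF,L_\infty)\le B(b)n^{1/2}(d_n(\bF,L_\infty)+\delta)$, and letting $\delta\to0$ finishes the proof.

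The main obstacle is the second step. We need to extract from \cite{LT} the precise statement of the sharp-count discretization: that it is valid for an arbitrary probability measure and subspace of $\cC(\Omega)$, and that the added constant function still fits in the count $\lceil b(n+1)\rceil$. If the statement in \cite{LT} is phrased only through $\ro$ in $L_2$, we would re-run its argument with $X_n'$ and record the weights and discretization constants explicitly.
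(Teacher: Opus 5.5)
Your proposal is correct and follows the paper's argument: the paper proves Theorem \ref{krT1} by repeating the derivation of (\ref{kr3}) (Kiefer--Wolfowitz measure, weighted discretization of the subspace enlarged by constants as in Remark \ref{BR1}, then Theorem \ref{BT1a} with $p=2$), with Theorem \ref{BT2} replaced by the sharp-count discretization. The step you flag as the obstacle is covered in the real case by Theorem \ref{DPSTT} applied to $X_n+\bbR$, giving $m\le\lceil b(n+1)\rceil$, $D=1$, $W=C(b-1)^{-2}$; in the complex case you apply it to the real space $\{\mathrm{Re}\, g: g\in X_n+\bbC\}=\mathrm{Re}(X_n)+\bbR$, whose dimension is at most $2n+1$ (not $2n+2$, as you wrote), and this is where $\lceil b(2n+1)\rceil$ comes from.
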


We complete this section with a brief historical comment. 

 {\bf Historical comments on weighted discretization.}
 In the case of weighted discretization, namely, when instead of $\frac{1}{m}\sum_{j=1}^m |f(\xi^j)|^2$ we use the weighted sum $\sum_{j=1}^m\lambda_j |f(\xi^j)|^2$, 
the problem of discretization is solved in the sense of order  in the case of real subspaces $X_N$. It is pointed out in \cite{VT159} that the paper by J. Batson, D.A. Spielman, and N. Srivastava \cite{BSS}  basically solves the discretization problem with weights.  We present an explicit formulation of this important result in our notation.

\begin{Theorem}[{\cite[Theorem 3.1]{BSS}}]\label{BSS} Let  $\Omega_M=\{x^j\}_{j=1}^M$ be a discrete set with the probability measure $\mu_M(x^j)=1/M$, $j=1,\dots,M$, and let $X_N$ be an $N$-dimensional subspace of real functions defined on $\Omega_M$.
Then for any
number $b>1$ there  exists a set of weights $\lambda_j\ge 0$ such that $|\{j: \lambda_j\neq 0\}| \le \lceil bN \rceil$ so that for any $f\in X_N $ we have
\begin{equation*}
\|f\|_2^2 \le \sum_{j=1}^M \lambda_jf(x^j)^2 \le \frac{b+1+2\sqrt{b}}{b+1-2\sqrt{b}}\|f\|_2^2.
\end{equation*}
\end{Theorem}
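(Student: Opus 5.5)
We follow the barrier-potential method of Batson, Spielman and Srivastava: first recast the statement as a spectral sparsification problem for a decomposition of the identity matrix in $\bbR^N$, then build the weights greedily one point at a time.

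\textbf{Reduction to matrices.} Let $u_1,\dots,u_N$ be an orthonormal basis of $X_N$ in $L_2(\Omega_M,\mu_M)$. Define vectors
$$
v_j := M^{-1/2}\bigl(u_1(x^j),\dots,u_N(x^j)\bigr)\in\bbR^N,\qquad j=1,\dots,M.
$$
Orthonormality gives $\sum_{j=1}^M v_jv_j^T=I_N$. For $f=\sum_i c_iu_i$ we have $\|f\|_2^2=|c|^2$ and $f(x^j)^2=M\<c,v_j\>^2$. So it suffices to find scalars $s_j\ge0$, at most $\lceil bN\rceil$ of them nonzero, with
$$
\lambda_{\min}\Bigl(\sum_j s_jv_jv_j^T\Bigr)\ge \alpha,\qquad \lambda_{\max}\Bigl(\sum_j s_jv_jv_j^T\Bigr)\le \kappa\alpha,\qquad \kappa=\frac{b+1+2\sqrt b}{b+1-2\sqrt b}.
$$
We then set $\lambda_j:=s_j/(M\alpha)$.

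\textbf{Barrier construction.} Start from $A_0=0$ and carry out $q=\lceil bN\rceil$ steps. At step $k$ we add $t\,v_jv_j^T$ for one chosen index $j$ and one chosen $t>0$. We maintain barriers $l_k<\lambda_{\min}(A_k)\le\lambda_{\max}(A_k)<u_k$, with $l_{k+1}=l_k+\delta_L$ and $u_{k+1}=u_k+\delta_U$. Control is through the potentials
$$
\Phi^u(A)=\operatorname{tr}(uI-A)^{-1},\qquad \Phi_l(A)=\operatorname{tr}(A-lI)^{-1}.
$$
The key lemma has two halves, both proved with the Sherman--Morrison formula.
\begin{enumerate}[(i)]
\item The upper potential does not increase when $u$ is shifted to $u+\delta_U$, provided $1/t\ge U_A(v)$, where
$$
U_A(v)=\frac{v^T(u'I-A)^{-2}v}{\Phi^u(A)-\Phi^{u'}(A)}+v^T(u'I-A)^{-1}v,\qquad u'=u+\delta_U .
$$
\item Symmetrically, the lower potential does not increase when $l$ is shifted to $l+\delta_L$, provided $1/t\le L_A(v)$. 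Here $L_A(v)$ is the analogous expression built from $(A-l'I)^{-1}$ and $(A-l'I)^{-2}$, with $l'=l+\delta_L$, and the two terms combined with a minus sign.
\end{enumerate}
Convexity and monotonicity of $x\mapsto (u-x)^{-1}$ and $x\mapsto (x-l)^{-1}$ supply the needed one-sided estimates.

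\textbf{Averaging and parameter choice.} Since $\sum_j v_jv_j^T=I$, averaging over $j$ gives
$$
\sum_j U_A(v_j)\le \frac1{\delta_U}+\Phi^u(A),\qquad \sum_j L_A(v_j)\ge \frac1{\delta_L}-\Phi_l(A).
$$
So some index $j$ satisfies $U_A(v_j)\le L_A(v_j)$ as soon as $1/\delta_U+\epsilon_U\le 1/\delta_L-\epsilon_L$, where $\epsilon_U,\epsilon_L$ are the initial potential values. We then choose $t$ between $1/L_A(v_j)$ and $1/U_A(v_j)$.

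Following BSS, take
$$
\delta_L=1,\quad \epsilon_L=\frac{1}{\sqrt b},\quad l_0=-\frac{N}{\epsilon_L},\qquad \delta_U=\frac{\sqrt b+1}{\sqrt b-1},\quad \epsilon_U=\frac{\sqrt b-1}{b+\sqrt b},\quad u_0=\frac{N}{\epsilon_U}.
$$
After $q$ steps the eigenvalue ratio is at most $(u_0+q\delta_U)/(l_0+q\delta_L)$. A direct computation shows this equals $\kappa$, and this is where $\kappa$ comes from. Rescaling $A_q$ so that its smallest eigenvalue is at least $1$ gives the lower bound $\|f\|_2^2$ in the statement.

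\textbf{Main obstacle.} The main difficulty is the one-step potential lemma. One must verify that the rank-one update together with the barrier shift leaves each potential non-increasing. For the lower barrier this requires the additional check that $\Phi_l\le 1/\delta_L$, which keeps $\lambda_{\min}$ strictly above the moving barrier. I would handle this carefully via Sherman--Morrison, together with the inequality $\operatorname{tr}(A-l'I)^{-2}\ge\ldots$ obtained by comparing eigenvalue sums. Everything else is bookkeeping.
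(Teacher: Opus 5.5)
Your proposal is correct and is exactly the Batson--Spielman--Srivastava barrier argument, preceded by the standard translation of $X_N$ into isotropic vectors $v_j$ with $\sum_j v_jv_j^T=I_N$; the paper gives no proof of its own and simply restates \cite[Theorem 3.1]{BSS} in this form. One small slip: with $q=\lceil bN\rceil$ steps the ratio $(u_0+q\delta_U)/(l_0+q\delta_L)$ equals $\kappa$ only when $bN$ is an integer, and in general it is at most $\kappa$, because (with $\delta_L=1$) the ratio is decreasing in $q$ for $q>-l_0$, since $\delta_U l_0-u_0<0$.
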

As observed  in \cite [Theorem 2.13]{DPTT},  this last theorem with  a general probability space $(\Omega,  \mu)$ in place of the discrete space  $(\Omega_M, \mu_M)$ remains true (with other constant in the right hand side) if $X_N\subset L_4(\Omega,\mu)$.  It was proved in \cite{DPSTT2}  that the additional assumption $X_N\subset L_4(\Omega,\mu)$ can be dropped as well.
\begin{Theorem}[{\cite[Theorem 6.3]{DPSTT2}}]\label{DPSTT} If $X_N$ is an $N$-dimensional subspace of the real $L_2(\Omega,\mu)$, then for any $b\in (1,2]$, there exist a set of $m\leq    \lceil bN \rceil$ points $\xi^1,\ldots, \xi^m\in\Omega$ and a set of nonnegative  weights $\lambda_j$, $j=1,\ldots, m$,  such that
\[ \|f\|_2^2\leq  \sum_{j=1}^m \lambda_j f(\xi^j)^2 \leq  \frac C{(b-1)^2}  \|f\|_2^2,\  \ \forall f\in X_N,\]
where $C>1$ is an absolute constant.
\end{Theorem}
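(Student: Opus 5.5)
The plan is to reduce the statement to the discrete Theorem~\ref{BSS}, which needs a finite set of points carrying the uniform measure. I will manufacture that set by random sampling from a suitably reweighted measure. The main worry is that functions in $X_N$ may be unbounded and need not lie in $L_4(\Omega,\mu)$. Reweighting by the Christoffel function removes this difficulty.

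\textbf{Step 1 (Christoffel reweighting).} Let $u_1,\dots,u_N$ be an orthonormal basis of $X_N$ in $L_2(\Omega,\mu)$. Define
\[
w(x):=\frac1N\sum_{i=1}^N u_i(x)^2 ,
\]
so that $\int_\Omega w\,d\mu=1$, and let $d\nu:=w\,d\mu$, which is a probability measure. For $f=\sum_i c_iu_i\in X_N$, Cauchy--Schwarz gives $f(x)^2\le Nw(x)\|f\|_2^2$. Set $g_f:=f/\sqrt{w}$ on $\{w>0\}$; the set $\{w=0\}$ is $\nu$-null and every $f\in X_N$ vanishes there. Then:
\begin{itemize}
\item $\|g_f\|_{L_2(\nu)}=\|f\|_{L_2(\mu)}$;
\item $\|g_f\|_\infty\le N^{1/2}\|f\|_2$;
\item the feature vectors $\varphi(x):=(u_i(x)/\sqrt{w(x)})_{i=1}^N$ satisfy $|\varphi(x)|^2=N$ identically and $\mathbb E_\nu \varphi\varphi^T=I$.
\end{itemize}
So, in the new coordinates, $X_N$ is a space of uniformly bounded functions, and the $L_4$ issue disappears.

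\textbf{Step 2 (random discretization).} Draw $x^1,\dots,x^M$ i.i.d.\ from $\nu$. By the matrix Chernoff (Rudelson-type) inequality for bounded isotropic vectors, for $M\ge CN\log N$ with some realization we have
\[
\Bigl\|\frac1M\sum_{j=1}^M\varphi(x^j)\varphi(x^j)^T-I\Bigr\|\le \tfrac12 .
\]
Equivalently, for all $f\in X_N$,
\[
\tfrac12\|f\|_2^2\le \frac1M\sum_{j=1}^M g_f(x^j)^2\le \tfrac32\|f\|_2^2 .
\]
In particular, restriction to $\Omega_M=\{x^j\}$ is injective on $X_N$. Hence $\{g_f|_{\Omega_M}\}$ is an $N$-dimensional space of real functions on $\Omega_M$ with the uniform measure $\mu_M$. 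I expect this step to be the main obstacle, since it is where unboundedness is really handled. If needed, I would first try Rudelson's lemma directly, because the bound $|\varphi|^2=N$ is exactly what that inequality requires.

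\textbf{Step 3 (sparsification and pulling back).} Apply Theorem~\ref{BSS} to this discrete space with the given $b\in(1,2]$. It yields weights $\lambda'_j\ge0$, at most $\lceil bN\rceil$ of them nonzero, such that
\[
\|g\|_{L_2(\mu_M)}^2\le\sum_j\lambda'_j g(x^j)^2\le\kappa(b)\|g\|_{L_2(\mu_M)}^2,\qquad \kappa(b)=\frac{b+1+2\sqrt b}{b+1-2\sqrt b}.
\]
Since $b+1-2\sqrt b=(\sqrt b-1)^2\asymp (b-1)^2$ for $b\in(1,2]$, we get $\kappa(b)\le C/(b-1)^2$.

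Now set $\lambda_j:=2\lambda'_j/w(x^j)$ and keep only the points with $\lambda'_j\neq0$; this leaves $m\le\lceil bN\rceil$ points. Because $g_f(x^j)^2=f(x^j)^2/w(x^j)$, combining with Step 2 gives
\[
\|f\|_2^2\le\sum_j\lambda_j f(x^j)^2\le 3\kappa(b)\|f\|_2^2\le\frac{C'}{(b-1)^2}\|f\|_2^2
\]
for all $f\in X_N$, which is the claimed statement.
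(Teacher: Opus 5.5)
Your argument is correct. The paper gives no proof of this statement: it is quoted from \cite{DPSTT2}, with the remark that Theorem~\ref{BSS} extends to a general $(\Omega,\mu)$ when $X_N\subset L_4(\Omega,\mu)$ and that this hypothesis can be removed. Your Christoffel reweighting is a valid way to remove it, since the transformed space is bounded and hence lies in $L_4(\nu)$; a preliminary discretization followed by Theorem~\ref{BSS} then finishes, and the pullback $\lambda_j=2\lambda'_j/w(x^j)$ and the bound $\kappa(b)\le C/(b-1)^2$ check out.

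Step~2 is also less of an obstacle than you expect. The sample size $M$ never enters the final count, so qualitative convergence is enough. The strong law of large numbers applies to the entries $u_iu_k\in L_1(\mu)$ of the empirical Gram matrix at i.i.d.\ $\mu$-samples, and this already gives a finite sample whose Gram matrix is within $1/2$ of the identity. So even the reweighting is optional.
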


\section{Some connections between the Kolmogorov widths and bilinear approximations}
\label{KB}

In this section we discuss the best $m$-term bilinear approximations in $L_\bp(\bbT^{2d})$  of functions from different classes. Our standard notation for the best $m$-term bilinear approximations is the following (see Section \ref{In})
$$
\sigma_m(\bF,\Pi)_\bp := \sup_{f\in \bF}\sigma_m(f,\Pi)_\bp.
$$
Note, that in a number of papers on this topic the following notation is used as well
$$
\tau_m(\bF)_\bp := \sigma_m(\bF,\Pi)_\bp,\qquad \tau_m(K)_\bp := \sigma_m(K,\Pi)_\bp.
$$
In the formulation of the known results we use the $\tau$ notation, which is used in the corresponding papers. 

We begin with a simple lemma, which was proved (in a particular case) in \cite{VT32}. For completeness we present 
a proof here.

\begin{Lemma}[{\cite{VT32}}]\label{KBL1} We have for $1\le p \le \infty$
\be\label{KB1}
d_n(\bW^K_1,L_p) = \tau_n(K)_{p,\infty}
\ee
and for $1\le q \le \infty$
\be\label{KB1a}
d_n(\bW^K_q,L_p) \le \tau_n(K)_{p,q'}.
\ee
\end{Lemma}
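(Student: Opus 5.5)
The plan rests on the representation $f(\bx)=\int K(\bx,\by)\varphi(\by)\,d\mu(\by)$, where $\bW^K_q:=\{J_K\varphi:\ \|\varphi\|_q\le 1\}$ and $J_K$ is the integral operator with kernel $K$. The vector norm $\|K\|_{p,s}$ takes the $L_p$ norm in $\bx$ first and then the $L_s$ norm in $\by$. We prove the upper bounds (\ref{KB1a}) and the "$\le$" half of (\ref{KB1}) together, and then prove the reverse inequality in (\ref{KB1}) by a column-sampling argument.

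\textbf{Upper bound.} Fix $\delta>0$ and pick $G(\bx,\by)=\sum_{i=1}^n u_i(\bx)v_i(\by)$ with $\|K-G\|_{p,q'}\le \tau_n(K)_{p,q'}+\delta$. Set $X_n:=\sp\{u_1,\dots,u_n\}$. For $f=J_K\varphi$ with $\|\varphi\|_q\le1$, the function $J_G\varphi=\sum_i u_i\int v_i\varphi\,d\mu$ lies in $X_n$. Minkowski's integral inequality followed by H\"older's inequality in $\by$ gives
$$
\|f-J_G\varphi\|_p\le \int \|K(\cdot,\by)-G(\cdot,\by)\|_p|\varphi(\by)|\,d\mu(\by)\le \|K-G\|_{p,q'}.
$$
Hence $d_n(\bW^K_q,L_p)\le \tau_n(K)_{p,q'}+\delta$, and letting $\delta\to0$ yields (\ref{KB1a}). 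Taking $q=1$, so that $q'=\infty$, gives the "$\le$" half of (\ref{KB1}).

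\textbf{Lower bound in (\ref{KB1}).} Let $X_n$ be any $n$-dimensional subspace of $L_p$. The functions $\varphi$ with $\|\varphi\|_1\le 1$ include approximate delta functions concentrated near a point $\bz$. For these, $J_K\varphi$ approximates the column $K(\cdot,\bz)$, so we expect
$$
\sup_{\bz}d(K(\cdot,\bz),X_n)_p\le \sup_{f\in\bW^K_1}d(f,X_n)_p .
$$
In the periodic setting with $K$ continuous one can instead use Fej\'er kernels $\cK_\bj(\cdot-\bz)$ as $\varphi$, which have $\|\cdot\|_1=1$ by (\ref{FKm}), and pass to the limit. In the general measurable case one uses $\varphi=\chi_E/\mu(E)$ together with Lebesgue differentiation, which gives the bound for almost every $\bz$; that is exactly what the $L_\infty$ norm in $\by$ requires.

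Next we choose for each $\bz$ a near-best approximant $P(\cdot,\bz)\in X_n$. Writing $P(\bx,\bz)=\sum_{i=1}^n u_i(\bx)c_i(\bz)$ in a basis $u_1,\dots,u_n$ of $X_n$, we get
$$
\|K-P\|_{p,\infty}\le d_n(\bW^K_1,L_p)+\delta ,
$$
which gives $\tau_n(K)_{p,\infty}\le d_n(\bW^K_1,L_p)$ once $\delta\to0$ and the infimum over $X_n$ is taken.

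\textbf{Main obstacle.} The hard step is measurability of the coefficient functions $c_i(\bz)$, so that $P$ really is an element of the bilinear form $\sum u_i(\bx)v_i(\by)$ with $v_i$ measurable, together with the passage from averages to point values. I would handle this by a measurable selection of near-best approximants, or more simply by working with continuous $K$ (as in the periodic applications). In that case $\bz\mapsto K(\cdot,\bz)$ is continuous into $L_p$, so a finite $\delta$-net in $\bz$ combined with a continuous partition of unity produces continuous coefficients $c_i$. Given the delta-function comparison above, the remaining estimates are routine.
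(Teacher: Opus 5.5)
Your proposal is correct and follows the paper's proof. The upper bound (\ref{KB1a}) comes from Minkowski's integral inequality and H\"older's inequality applied to a near-best bilinear approximant, and the reverse inequality in (\ref{KB1}) comes from each column $K(\cdot,\by)$ lying in the closure of $\bW^K_1$ after reducing to continuous $K$. Your partition-of-unity construction of continuous coefficients $c_i(\bz)$ usefully makes explicit a step the paper passes over with ``therefore'' in (\ref{KB2}).
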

\begin{proof} It is clear that it is sufficient to prove (\ref{KB1}) for continuous functions $K$. For a fixed $\by \in \Omega^2$
the function $K(\bx,\by)$ as a function on $\bx\in \Omega^1$ belongs to the closure of the class $\bW^K_1$. Therefore,
\be\label{KB2}
d_n(\bW^K_1,L_p) \ge \inf_{u_i,v_i}\left \|K(\bx,\by) -\sum_{i=1}^n u_i(\bx)v_i(\by)\right\|_{p,\infty} = \tau_n(K)_{p,\infty}.
\ee
We now prove (\ref{KB1a}). Let for $\e>0$ the systems of functions $\{u_i\}_{i=1}^n \subset L_p(\Omega^1)$ and $\{v_i\}_{i=1}^n \subset L_\infty(\Omega^2)$ be such that
\be\label{KB3}
\left \|K(\bx,\by) -\sum_{i=1}^n u_i(\bx)v_i(\by)\right\|_{p,q'} \le \tau_n(K)_{p,q'} +\e.
\ee
Then for any $\ff \in L_q(\Omega^2)$, $\|\ff\|_q \le1$, we have  
\be\label{KB4}
\int_{\Omega^2} \left(K(\bx,\by) -\sum_{i=1}^n u_i(\bx)v_i(\by)\right)\ff(\by)d\mu_2 = f(\bx) - \sum_{i=1}^n a_iu_i(\bx)
\ee
and 
$$
\left\|f(\bx) - \sum_{i=1}^n a_iu_i(\bx)\right\|_p \le \int_{\Omega^2} \left\|K(\cdot,\by) -\sum_{i=1}^n u_i(\cdot)v_i(\by)\right\|_p|\ff(\by|d\mu_2
$$
$$
\le \left\|K(\bx,\by) -\sum_{i=1}^n u_i(\bx)v_i(\by)\right\|_{p,q'} \le \tau_n(K)_{p,q'} +\e.
$$
This implies that
\be\label{KB5}
d_n(\bW^K_q,L_p) \le \tau_n(K)_{p,q'},
\ee
which proves (\ref{KB1a}). 
Inequalities (\ref{KB2}) and (\ref{KB5}) with $q=1$ complete the proof of (\ref{KB1}). 
\end{proof}

{\bf Some useful tricks.} 
For bounded linear operators $P\,:\, X\to Y$ and $Q\,:\, Y\to Z$ acting in Banach spaces $X$, $Y$, $Z$ we have the following simple inequality
\be\label{KB6}
d_{2n}(QP(B_X),Z) \le d_n(P(B_X),Y)d_n(Q(B_Y),Z). 
\ee

Let $H$ be a Hilbert space and $J\,:\, H\to H$ be a compact linear operator. Then 
 \be\label{KB7}
 d_n(J(B_H),H) = s_{n+1}(J). 
 \ee

Let $K\in L_2(\Omega^1\times\Omega^2)$. Then the following Schmidt's formula holds
\be\label{KB8}
\tau_n(K)_2 = \left(\sum_{i=n+1}^\infty s_i(J_K)^2\right)^{1/2},
\ee
which implies that 
\be\label{KB9}
s_{2n}(J_K) \le n^{-1/2}\tau_n(K)_2.
\ee

{\bf Some  known results on bilinear approximation and singular numbers.}

{\bf The case $d\ge 1$.} Here is the result from \cite{VT47}.

\begin{Theorem}[{\cite{VT47}}, Theorem 2.1]\label{VT47T2.1} Let $\br = (r_1,\dots,r_1,r_2,\dots,r_2)\in \bbR_+^{2d}$ have the first 
$d$ coordinates equal $r_1$ and the rest equal $r_2$. Assume that $r_i>1/2$, $i=1,2$ and $2\le \bq \le \infty$, $2\le \bp<\infty$. Then for the class $\bW^\br_{\bq}$ we have
$$
\tau_m(\bW^\br_{\bq})_\bp \asymp m^{-r_1-r_2} (\log m)^{(r_1+r_2)(d-1)}.
$$
\end{Theorem}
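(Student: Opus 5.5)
The two-sided estimate splits into an upper bound and a lower bound, and monotonicity reduces each to one extreme case. Since $\mu$ is a probability measure, $\bW^\br_{\bq}\subset \bW^\br_{\mathbf 2}$ for $\bq\ge\mathbf 2$ and $\bW^\br_{\boldsymbol\infty}\subset\bW^\br_\bq$. Also $\|\cdot\|_{\mathbf 2}\le\|\cdot\|_\bp$. So it suffices to prove the upper bound for $\bq=\mathbf 2$ and arbitrary $2\le\bp<\infty$, and the lower bound for $\bq=\boldsymbol\infty$, $\bp=\mathbf 2$.

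\textbf{Upper bound, $\bp=\mathbf 2$.} Write $K=F_{\br^1}(\bx)\ast_{\bx}\ff\ast_{\by}F_{\br^2}(\by)$ with $\|\ff\|_2\le1$. Then $J_K=A\,J_\ff\,B$, where $A,B$ are the convolution operators with $F_{\br^1}$ and $F_{\br^2}$ on $L_2(\bbT^d)$ and $J_\ff$ is Hilbert--Schmidt with $\sum_k s_k(J_\ff)^2\le1$. The singular numbers of $A$ are the rearranged multipliers $\prod_j\max(1,|k_j|)^{-r_1}$. Counting points of the hyperbolic cross gives $s_k(A)\asymp (k^{-1}\log^{d-1}k)^{r_1}$, and similarly $s_k(B)\asymp(k^{-1}\log^{d-1}k)^{r_2}$. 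The Horn-type inequality $s_{i+j+l-2}(AJ_\ff B)\le s_i(A)s_j(J_\ff)s_l(B)$ with $i=l\approx m/3$ and $j=k-2i+2$ then gives, via Schmidt's formula (\ref{KB8}),
$$
\tau_m(K)_2^2=\sum_{k>m}s_k(J_K)^2\le s_{m/3}(A)^2s_{m/3}(B)^2\sum_{j>m/3}s_j(J_\ff)^2\ll \bigl(m^{-r_1-r_2}(\log m)^{(r_1+r_2)(d-1)}\bigr)^2 .
$$
Here the Hilbert--Schmidt property of $J_\ff$ absorbs the extra $m^{1/2}$ that a crude estimate would lose. The case $p=2$ does not need $r_i>1/2$.

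\textbf{Upper bound, $2<\bp<\infty$.} This is where $r_i>1/2$ enters, and it is the first real obstacle. I would decompose $K$ into dyadic blocks $K_{\mu,\nu}:=\sum_{|\bs|_1=\mu,\,|\btt|_1=\nu}\delta_{\bs,\btt}(K)$, with $\bs$ the $\bx$-blocks and $\btt$ the $\by$-blocks. I would approximate each block in $L_2$ by the Schmidt construction above, restricted to the subspace $\cT(Q_\mu)\otimes\cT(Q_\nu)$ of hyperbolic-cross polynomials, using rank $m_{\mu,\nu}$ chosen to decay geometrically away from the level $\mu+\nu\approx n$, $2^n n^{d-1}\asymp m$. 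Orthogonal projection onto that tensor subspace keeps the approximant inside it. I would then pass from $L_2$ to mixed $L_\bp$ on each block by the Littlewood--Paley theorem together with the Nikol'skii inequality for hyperbolic-cross polynomials, which costs $2^{(\mu+\nu)(1/2-1/p)}$ up to logarithms. The condition $r_i>1/2$ should make the resulting double geometric series in $(\mu,\nu)$ converge to the main term. I expect the bookkeeping of logarithmic factors here to need care, so as not to lose powers of $\log m$.

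\textbf{Lower bound.} Take $p=2$. I would use kernels $K=F_{\br^1}\ast\ff\ast F_{\br^2}$ with $\ff(\bx,\by)=g(\bx-\by)$, so that $J_K$ is a convolution and its singular numbers are $|\hat g(\bk)|\prod_j|k_j|^{-r_1-r_2}$. Let $g$ have $|\hat g(\bk)|=N^{-1/2}$ on the hyperbolic layer $\{\bk\in\rho(\bs):|\bs|_1=n\}$, of size $N\asymp 2^nn^{d-1}\asymp 2m$, and suppose $\|g\|_\infty\ll1$. Then (\ref{KB8}) gives
$$
\tau_m(K)_2^2\gg (N-m)N^{-1}2^{-2n(r_1+r_2)}\asymp \bigl(m^{-r_1-r_2}(\log m)^{(r_1+r_2)(d-1)}\bigr)^2 .
$$
The main obstacle is constructing such a $g$, or more generally a bounded $\ff$ whose operator has $\asymp m$ singular numbers of size $N^{-1/2}$ on the layer. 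Summing Rudin--Shapiro polynomials over the boxes $\rho(\bs)$ loses a factor $n^{(d-1)/2}$ in $\|g\|_\infty$. To avoid this I would first try a $\ff$ that is not a difference kernel, for example $\ff(\bx,\by)=\sum_{|\bs|_1=n}R_\bs(\bx)\overline{R_{\bs}(\by)}$ with independent sign patterns. Failing that, I would use a duality argument: bound $\tau_m$ from below by pairing with a test polynomial whose $L_1$-type norm on the layer is controlled, in the spirit of the lower bounds for Kolmogorov widths of $\bW^r_\infty$.
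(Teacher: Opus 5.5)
Your reduction (upper bound only for $\bq=\mathbf 2$; lower bound only for $\bq=\infty$, $\bp=\mathbf 2$) is correct. So is your $\bp=\mathbf 2$ upper bound via $J_K=AJ_\varphi B$, Horn's inequality and (\ref{KB8}). The paper only quotes this theorem from \cite{VT47}, so there is no in-paper proof to compare against; the question is whether your plan closes. It does not, in either of the two substantive parts.

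\textbf{Upper bound for $2<\bp<\infty$.} The critical blocks are those with $\mu\approx\nu\approx n$. There $\|K_{\mu,\nu}\|_2$ is of the order of the target $2^{-n(r_1+r_2)}$ and the rank is $\asymp m$. The level $\mu+\nu\approx n$ that you single out consists of blocks far larger than the target but cheap to keep whole.

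More importantly, your only passage from $L_2$ to $L_p$ is the Nikol'skii inequality applied to the $L_2$ error of an SVD truncation. On high-rank blocks with flat spectra, truncation gains essentially nothing in $L_2$ until the rank is a fixed fraction of the block rank. Take $\Delta Q_{n+1}:=\bigcup_{|\bs|_1=n+1}\rho(\bs)$, $N:=|\Delta Q_{n+1}|\ge 2m$, and $\varphi=N^{-1/2}\sum_{\mathbf k\in\Delta Q_{n+1}}e^{i(\mathbf k,\bx-\by)}$. Then:
\begin{itemize}
\item $K$ is a single block, and $J_K$ has $N$ singular numbers of comparable size;
\item the block cannot be kept whole, and every approximant of rank at most $m$ leaves $L_2$ error $\gg 2^{-n(r_1+r_2)}$;
\item your chain therefore yields only $\|K-G\|_p\ll 2^{2n(1/2-1/p)}2^{-n(r_1+r_2)}$ (scalar $p$), a positive power of $m$ above the target, whatever the choice of $m_{\mu,\nu}$.
\end{itemize}
Nothing in the plan addresses this. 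What is missing is an inherently nonlinear $L_2\to L_p$ step of Kashin--Gluskin type. One needs $k$-dimensional approximation of $L_2$-balls of $N$-dimensional trigonometric spaces in $L_p$ with an error factor of order $(N/k)^{1/2}$, rather than the Nikol'skii factor $N^{1/2-1/p}$. This is what underlies the Kolmogorov-width bound $d_m(\bW^{r}_2,L_p)\asymp(m^{-1}\log^{d-1}m)^{r}$, $2\le p<\infty$; compare (\ref{KolWb}) for $p=\infty$. It is the same phenomenon that makes linear approximation of $\bW^r_2$ in $L_p$ lose a factor $m^{1/2-1/p}$.

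\textbf{Lower bound.} Since the statement includes $\bq=\infty$, the case your reduction leaves is the genuinely hard one, and it is not settled.
\begin{itemize}
\item Your main route needs a bounded $g$ with $|\hat g|\asymp N^{-1/2}$ on essentially the whole layer. Summing Rudin--Shapiro blocks loses $n^{(d-1)/2}$, which is exactly the logarithmic factor at stake. (For finite $\bq$, Littlewood--Paley removes this loss and your construction works.)
\item Your first fallback cannot work. The kernel $\varphi=\sum_{|\bs|_1=n}R_\bs(\bx)\overline{R_\bs(\by)}$ is a sum of $\asymp n^{d-1}$ rank-one products. Hence $J_K$ has rank $O(n^{d-1})<m$, and $\tau_m(K)_2=0$.
\item The duality remark is not an argument.
\end{itemize}
A usable criterion is $\sum_{k>m}s_k(J_K)^2\ge\|K\|_2^2-m\|J_K\|^2$. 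By it, it suffices to construct $\varphi$ with
\begin{itemize}
\item $\|\varphi\|_\infty\ll 1$,
\item spectrum essentially in $\Delta Q_n\times\Delta Q_n$,
\item $\|\varphi\|_2\gg 1$,
\item $\|J_\varphi\|\le c\,m^{-1/2}$ with $c$ small.
\end{itemize}
Boundedness then has to come from more than Rudin--Shapiro sign patterns, for instance from localizing the individual block pieces on essentially disjoint sets. This construction is the missing step.
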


\begin{Corollary}\label{KBC1} Under conditions of Theorem \ref{VT47T2.1} we have
$$
s_m(\bW^\br_{\bq}) := \sup_{K\in \bW^\br_{\bq}} s_m(J_K) \ll m^{-r_1-r_2-1/2} (\log m)^{(r_1+r_2)(d-1)}.
$$
\end{Corollary}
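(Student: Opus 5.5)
The bound follows directly from Theorem \ref{VT47T2.1} combined with the Schmidt-type inequality (\ref{KB9}). Fix $K\in \bW^\br_\bq$ under the hypotheses of Theorem \ref{VT47T2.1}.

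First we pass from the vector norm $L_\bp$ to the $L_2$ norm on $\bbT^{2d}$. Since $\bp\ge 2$ coordinatewise and the normalized measure on $\bbT^{2d}$ is a probability measure, monotonicity of $L_p$ norms in each variable (iterated, using Minkowski/Hölder layer by layer) gives $\|g\|_2\le \|g\|_\bp$ for every $g$. Hence
$$
\tau_n(K)_2 \le \tau_n(K)_\bp \le \tau_n(\bW^\br_\bq)_\bp .
$$
Any admissible $\bp$ may be used here, for instance $\bp=(2,2)$ itself.

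Second, we apply (\ref{KB9}), which follows from Schmidt's formula (\ref{KB8}) and the monotonicity of the singular numbers. Indeed, $n\,s_{2n}(J_K)^2\le \sum_{i=n+1}^{2n}s_i(J_K)^2\le \tau_n(K)_2^2$, so
$$
s_{2n}(J_K)\le n^{-1/2}\tau_n(\bW^\br_\bq)_\bp \ll n^{-1/2}\, n^{-r_1-r_2}(\log n)^{(r_1+r_2)(d-1)},
$$
where the last step is the upper bound of Theorem \ref{VT47T2.1}. Taking the supremum over $K$ gives the estimate for even indices $m=2n$.

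Finally, for general $m\ge 2$ we put $n=\lfloor m/2\rfloor$ and use $s_m\le s_{2n}$. Since $n\asymp m$ and $\log n\asymp \log m$, the bound becomes $m^{-r_1-r_2-1/2}(\log m)^{(r_1+r_2)(d-1)}$, with the constants absorbed into $\ll$. Small values of $m$ are covered trivially, because $s_1(J_K)\le\|K\|_2$ is uniformly bounded on the class.

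The argument has no real obstacle. The only point needing care is the norm comparison $\|\cdot\|_2\le\|\cdot\|_\bp$ for mixed norms, together with checking that the hypotheses of Theorem \ref{VT47T2.1} (namely $2\le\bp<\infty$) allow the choice $\bp=2$. They do, so one may take $\bp=2$ directly and skip the comparison entirely.
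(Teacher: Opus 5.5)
Your proposal is correct and follows the route the paper intends. You take $\bp=2$ in Theorem \ref{VT47T2.1} and combine it with the Schmidt consequence (\ref{KB9}), $s_{2n}(J_K)\le n^{-1/2}\tau_n(K)_2$; the passage from even to general indices by monotonicity of $s_m$ and the handling of small $m$ are exactly the routine details the paper leaves implicit.
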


Here are the corresponding results for the $\bH$ classes form \cite{VT47}.

\begin{Theorem}[{\cite{VT47}}, Theorem 2.2]\label{VT47T2.2} Let $\br = (r_1,\dots,r_1,r_2,\dots,r_2)\in \bbR_+^{2d}$ have the first 
$d$ coordinates equal $r_1$ and the rest equal $r_2$. Assume that $r_i>1/2$, $i=1,2$ and $2\le \bq \le \infty$, $2\le \bp<\infty$. Then for the class $\bH^\br_{\bq}$ we have
$$
\tau_m(\bH^\br_{\bq})_\bp \asymp m^{-r_1-r_2} (\log m)^{(r_1+r_2+1)(d-1)}.
$$
\end{Theorem}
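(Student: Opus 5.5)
The plan is to prove the upper bound for the hardest parameter choice, $\bq=(2,2)$ with $\bp$ large, and the lower bound for the easiest choice. Monotonicity of the classes in $\bq$ and of the norms in $\bp$ then gives the full range. Throughout, $m\asymp 2^n n^{d-1}$, and every $f\in\bH^\br_\bq$ is written as $f=\sum_{\bs}A_\bs(f)$ with $\bs=(\bs^1,\bs^2)\in\bbN_0^{2d}$. By Theorem \ref{H},
$$
\|A_\bs(f)\|_\bq\ll 2^{-r_1\|\bs^1\|_1-r_2\|\bs^2\|_1}.
$$

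\emph{Upper bound.} The key structural fact is rank control. For fixed $\bs^1$, the function $\sum_{\bs^2}A_{(\bs^1,\bs^2)}(f)$ has $\bx$-frequencies in a dyadic box of cardinality $\asymp 2^{\|\bs^1\|_1}$, so it is exactly a sum of $\ll 2^{\|\bs^1\|_1}$ products $e^{i(\bk,\bx)}v_\bk(\by)$. I would split $f=f_1+f_2+f_3$:
\begin{itemize}
\item $f_1$ collects the blocks with $\|\bs^1\|_1\le n$;
\item $f_2$ collects the blocks with $\|\bs^1\|_1>n$ and $\|\bs^2\|_1\le n$;
\item $f_3$ collects the remaining blocks, where both $\|\bs^1\|_1>n$ and $\|\bs^2\|_1>n$.
\end{itemize}
By the rank fact, $f_1$ is exactly representable with $\sum_{\|\bs^1\|_1\le n}2^{\|\bs^1\|_1}\asymp 2^n n^{d-1}\asymp m$ bilinear terms. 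Expanding in $\by$ instead, $f_2$ is representable with the same number of terms. Hence $\tau_{2m}(f)_\bp\le\|f_3\|_\bp$.

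It remains to estimate $f_3$. I would use the Littlewood--Paley inequality in the mixed norm $L_\bp$, $2\le\bp<\infty$, applied separately in the $\bx$ and $\by$ groups of variables, so that $\|f_3\|_\bp$ is controlled by a square function over the blocks. Each block is then estimated by a Nikol'skii-type inequality in each group, and the factors are summed. Heuristically, summing $2^{-2r_1\|\bs^1\|_1}$ over $\|\bs^1\|_1>n$ gives $(2^{-2r_1n}n^{d-1})^{1/2}$, and similarly in $\by$. This yields
$$
\|f_3\|_\bp\ll 2^{-(r_1+r_2)n}n^{d-1},
$$
and since $2^{-n}\asymp m^{-1}(\log m)^{d-1}$, this is exactly $m^{-r_1-r_2}(\log m)^{(r_1+r_2+1)(d-1)}$.

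The main obstacle is this step when $\bp$ exceeds $\bq$. A naive Nikol'skii inequality per block loses $2^{(1/q-1/p)\|\bs\|_1}$ and destroys the rate. I would first try to trade smoothness for integrability, namely the embedding $\bH^{r}_2\hookrightarrow \bH^{r-1/2+1/p}_p$, used only in the tail range $\|\bs^i\|_1>n$, where the hypothesis $r_i>1/2$ keeps the resulting geometric sums convergent. If that still loses, I would refine the splitting: inside $f_3$, approximate each fixed-$\bs^1$ group not exactly but by its best low-rank part, with rank budget $\asymp 2^{n-\kappa(\|\bs^1\|_1-n)}$ for a suitable $\kappa>0$. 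Its error can then be estimated through the singular numbers of the block via (\ref{KB8}), with the total rank kept $\ll m$ by the geometric decay in $\|\bs^1\|_1-n$. It is precisely here that $r_i>1/2$ should be needed.

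\emph{Lower bound.} It suffices to take $\bq=(\infty,\infty)$ and $\bp=(2,2)$. Take a test function built from the $\asymp n^{d-1}$ blocks with $\|\bs^1\|_1=\|\bs^2\|_1=n$:
$$
K=2^{-(r_1+r_2)n}\sum_{\|\bs^1\|_1=\|\bs^2\|_1=n}\ \sum_{\bk\in\rho(\bs^1)}e^{i(\bk,\bx)}e^{i(\bk',\by)},
$$
where $\bk\mapsto\bk'$ is a bijection pairing frequencies of $\rho(\bs^1)$ with those of a chosen $\rho(\bs^2)$. The choice must be arranged, for instance by using Rudin--Shapiro-type signs, so that each block has uniformly bounded sup-norm; then Theorem \ref{H} shows $K\in \bH^\br_\infty B$ with $B$ absolute. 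The operator $J_K$ is diagonal in the exponential bases, with $\asymp 2^n n^{d-1}$ singular values all equal to $2^{-(r_1+r_2)n}$. Schmidt's formula (\ref{KB8}) with $m$ a small fraction of $2^nn^{d-1}$ then gives
$$
\tau_m(K)_2\gg 2^{-(r_1+r_2)n}(2^nn^{d-1})^{1/2}.
$$
The normalization has to be tuned against the extra $n^{d-1}$ from $\bH$ (versus $\bW$) so that this is $\gg m^{-r_1-r_2}(\log m)^{(r_1+r_2+1)(d-1)}$. Keeping the test function in $\bH^\br_\infty$ while achieving that size is the delicate point of the lower bound.
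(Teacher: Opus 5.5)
The paper only quotes this theorem from \cite{VT47}, so there is no in-paper proof to compare with. Judged on its own, your proposal has a genuine gap in each direction. The reduction to extreme parameters, the rank splitting $f=f_1+f_2+f_3$, and the square-function bound for $f_3$ are fine and give the upper bound for $\bp=2$. The case you single out, $\bq=2$ with $\bp$ large, is not proved, and neither fallback can prove it.

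\textbf{Upper bound for $\bp>2$.}
\begin{itemize}
\item The embedding $\bH^{\br}_2\hookrightarrow\bH^{\br-1/2+1/p}_p$ is the blockwise Nikol'skii inequality in another guise. It gives only $\|f_3\|_p\ll 2^{-(r_1+r_2-1+2/p)n}n^{d-1}$.
\item The singular-number route is purely $L_2$, and (\ref{KB8}) gives no control of $L_p$ errors. Take $g=N^{-1/2}\sum_{\mathbf k\in\rho(\bs)}e^{i(\mathbf k,\bx+\by)}$, $N=|\rho(\bs)|$; after multiplication by $2^{-(r_1+r_2)\|\bs\|_1}$ it is a legitimate block of a function in $C\bW^\br_2$. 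All its singular values coincide, and every truncated Schmidt expansion of rank at most $N/2$ leaves an $L_p$-error $\gg N^{1/2-1/p}\|g\|_2$. With $\|\bs\|_1=n+1$ this already misses the target by a power of $m$.
\end{itemize}

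What is missing is a genuinely nonlinear width estimate of Kashin--Gluskin type: $d_k(\cT(\Theta)_2,L_p)\ll(|\Theta|/k)^{1/2}$ for $2\le p<\infty$. It follows from $d_k(B^N_2,\ell^N_p)\ll N^{1/p}k^{-1/2}$ via Marcinkiewicz-type discretization and Littlewood--Paley. One way to use it:
\begin{itemize}
\item Take the layer $G_{a,b}$ of $f_3$ with $\|\bs^1\|_1=n+a$, $\|\bs^2\|_1=n+b$, and spectra of sizes $N_1,N_2$. Remove its first $k/2$ Schmidt terms.
\item For any orthonormal system of polynomials with $N_2$ frequencies, $\sum_j|\psi_j(\by)|^2\le N_2$. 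Hence the remainder $R$ satisfies $\sup_{\by}\|R(\cdot,\by)\|_2\le(2N_2/k)^{1/2}\|G_{a,b}\|_2$.
\item Approximating every $R(\cdot,\by)$ from one Gluskin subspace of dimension $k/2$ gives $\tau_k(G_{a,b})_{p,\infty}\ll(N_1N_2)^{1/2}k^{-1}\|G_{a,b}\|_2$.
\item Choose $k\asymp 2^{n-\kappa(a+b)}n^{d-1}$, and drop the layers with $\kappa(a+b)>n$, bounding them by Nikol'skii's inequality. The total rank is $\ll m$, and the errors add up to $\ll 2^{-(r_1+r_2)n}n^{d-1}$ as soon as $r_i>1/2+\kappa$.
\end{itemize}
So your budgeting idea survives, but the per-layer estimate must come from widths, not singular numbers. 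This is where $r_i>1/2$ really enters (cf. Remark \ref{KBR1}).

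\textbf{Lower bound.} Your test function is not in $\bH^\br_\infty$. Each block $\sum_{\mathbf k\in\rho(\bs^1)}\pm e^{i(\mathbf k,\bx)+i(\mathbf k',\by)}$ has $L_2$-norm $2^{n/2}$, hence sup-norm at least $2^{n/2}$ for any signs. Membership therefore forces the coefficient down to $\ll 2^{-(r_1+r_2)n-n/2}$, which Rudin--Shapiro-type signs attain.

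More seriously, a one-to-one pairing $\bs^1\mapsto\bs^2$ gives only $\asymp n^{d-1}$ blocks, each of $L_2$-norm $\ll 2^{-(r_1+r_2)n}$. So, whatever the normalization,
$\tau_m(K)_2\le\|K\|_2\ll 2^{-(r_1+r_2)n}n^{(d-1)/2}\asymp m^{-r_1-r_2}(\log m)^{(r_1+r_2+1/2)(d-1)}$.
The ``delicate point'' you leave open cannot be resolved by any diagonal construction; it falls short by $(\log m)^{(d-1)/2}$.

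The missing idea is to couple each $\rho(\bs^1)$, $\|\bs^1\|_1=n$, with all $\asymp n^{d-1}$ blocks $\rho(\bs^2)$, $\|\bs^2\|_1=n$.
\begin{itemize}
\item Put on each of the $\asymp n^{2(d-1)}$ rectangles $\rho(\bs^1)\times\rho(\bs^2)$ a flat polynomial of sup-norm $\asymp 2^{-(r_1+r_2)n}$. For example, use a permutation-type pairing with generalized Rudin--Shapiro signs, times a sign $\eta_{\bs^1\bs^2}$.
\item This is exactly where $\bH$ allows more than $\bW$, and it raises $\|K\|_2$ to $\asymp 2^{-(r_1+r_2)n}n^{d-1}$.
\item $J_K$ is then no longer diagonal, so you must still show its singular values are not concentrated on the first $cm$ indices. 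Choose the $\eta_{\bs^1\bs^2}$ so that $\|J_K^*J_K\|_{HS}^2\ll\|K\|_2^4/(2^nn^{d-1})$; a random choice does this on average.
\item Then $\sum_{i\le m}s_i(J_K)^2\le m^{1/2}\|J_K^*J_K\|_{HS}$ gives $\tau_m(K)_2\gg\|K\|_2$ for $m\le c2^nn^{d-1}$.
\end{itemize}
Alternatively, the lower bound follows from that in Corollary \ref{KBC2} via $\tau_m(K)_2\ge m^{1/2}s_{2m}(J_K)$. That result of \cite{VT47}, however, rests on a construction of this kind.
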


\begin{Corollary}[{\cite{VT47}}, Theorem 3.1]\label{KBC2} Let $\br = (r_1,\dots,r_1,r_2,\dots,r_2)\in \bbR_+^{2d}$ have the first 
$d$ coordinates equal $r_1$ and the rest equal $r_2$. Assume that $r_i>0$, $i=1,2$ and $2\le \bq \le \infty$. Then for the class $\bH^\br_{\bq}$ we have
$$
s_m(\bH^\br_{\bq}) \asymp m^{-r_1-r_2-1/2} (\log m)^{(r_1+r_2+1)(d-1)}.
$$
\end{Corollary}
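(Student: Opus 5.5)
The plan is to prove the upper and lower bounds separately. The upper bound uses the dyadic blocks $A_\bs$ of Theorem \ref{H}. The lower bound uses an explicit extremal kernel whose integral operator has many equal singular numbers.

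\textbf{Upper bound.}
\begin{enumerate}
\item Since $\bq\ge 2$, we have $\bH^\br_\bq\subset\bH^\br_2$, so it suffices to treat $\bq=2$. When $r_1,r_2>1/2$ the bound follows at once from Theorem \ref{VT47T2.2} with $\bp=2$ and the inequality (\ref{KB9}). For general $r_i>0$ the argument has to be done directly.
\item Write $\bs=(\bs^1,\bs^2)\in\bbN_0^d\times\bbN_0^d$ and $K=\sum_{\bs}A_\bs(K)$. Group the blocks into layers
$$
K_{a,b}:=\sum_{\|\bs^1\|_1=a,\ \|\bs^2\|_1=b}A_\bs(K).
$$
\item Each $J_{K_{a,b}}$ has rank at most $\min(a^{d-1}2^{a},b^{d-1}2^{b})$, up to constants.
\item By Theorem \ref{H} and the finite overlap of the frequency supports of the $\cA_\bs$, the operator norm of $J_{K_{a,b}}$ is bounded by its Hilbert--Schmidt norm:
$$
\|J_{K_{a,b}}\|\le \|K_{a,b}\|_2\ll (ab)^{(d-1)/2}2^{-r_1a-r_2b}.
$$
\item Fix $n$ with $m\asymp 2^nn^{d-1}$. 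Use the Ky Fan inequality
$$
s_{1+\sum k_{a,b}}\Big(\sum J_{a,b}\Big)\le\sum s_{1+k_{a,b}}(J_{a,b}).
$$
Take $k_{a,b}$ equal to the full rank when $\min(a,b)\le n$ minus a small margin, and $k_{a,b}=0$ otherwise.
\item For the remaining layers, bound each contribution by $\|K_{a,b}\|_2\,(\text{rank})^{-1/2}$ when the layer is only partially used, and by the operator norm otherwise. Summing the resulting geometric series in $a,b$ gives $m^{-r_1-r_2-1/2}(\log m)^{(r_1+r_2+1)(d-1)}$.
\end{enumerate}
The bookkeeping of the ranks so that $\sum k_{a,b}\ll m$ is routine, but it must be done carefully. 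The factor $(ab)^{(d-1)/2}$ from the Hilbert--Schmidt norm supplies part of the logarithm; the conversion $2^{-n}\asymp m^{-1}(\log m)^{d-1}$ supplies the rest.

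\textbf{Lower bound.}
\begin{enumerate}
\item Take a convolution-type kernel
$$
K(\bx,\by)=c\sum_{\|\bs\|_1=n}\ \sum_{\bj\in\rho(\bs)}\e_\bj\, e^{i(\bj,\bx-\by)},
$$
with signs $\e_\bj=\pm1$ (Rudin--Shapiro type, or random). The operator $J_K$ is then diagonal in the exponential basis, with $\asymp 2^nn^{d-1}$ singular numbers all equal to $c$.
\item Membership in $\bH^\br_\bq$ is checked via the converse part of Theorem \ref{H}, with $\bq=\infty$ as the worst case. Only the diagonal blocks $A_{(\bs,\bs)}K$ with $\|\bs\|_1=n$ are nonzero, and the choice of signs gives $\|A_{(\bs,\bs)}K\|_\infty\ll c\,2^{n/2}$. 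Hence $c=2^{-(r_1+r_2+1/2)n}$ is admissible.
\end{enumerate}
With this $c$ the bound comes out as $m^{-r_1-r_2-1/2}(\log m)^{(r_1+r_2+1/2)(d-1)}$. This is short of the claimed log exponent by $(d-1)/2$.

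The main obstacle is recovering that missing factor $(\log m)^{(d-1)/2}$ in the lower bound. The diagonal kernel $g(\bx-\by)$ wastes the freedom to choose $\bs^1\ne\bs^2$. The first attempt will be to replace the single-sign diagonal by a block construction in which the coefficient matrix between $\rho(\bs^1)$ and $\rho(\bs^2)$ is, for all pairs with $\|\bs^1\|_1=\|\bs^2\|_1=n$, a scaled orthogonal (Hadamard-type) matrix. This keeps all singular numbers equal, while each individual block $A_{(\bs^1,\bs^2)}$ still has small uniform norm. The number of blocks grows by a factor $n^{d-1}$, and this is where the extra $n^{(d-1)/2}$ should come from.
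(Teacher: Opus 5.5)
\paragraph{Source of the statement and your upper bound.}
The paper gives no proof of this statement; it is quoted from \cite{VT47}. Within the paper it can be read as a corollary of the two-sided $L_2$ estimate in Theorem \ref{VT47T2.2} (take $\bp=2$, and use Remark \ref{KBR1} for $r_i\le 1/2$) combined with Schmidt's formula (\ref{KB8}). This parallels how Corollary \ref{KBC1} comes from Theorem \ref{VT47T2.1}.

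Your step 1 is exactly this route for the upper bound, and with Remark \ref{KBR1} it already covers all $r_i>0$. So the Ky Fan detour is unnecessary. As written it would not work anyway, for two reasons:
\begin{itemize}
\item The full ranks of the individual layers with $\min(a,b)\le n$ sum to infinity, since each $a\le n$ pairs with infinitely many $b$.
\item Setting $k_{a,b}=0$ for $a,b>n$ only gives $\sum_{a,b>n}\|K_{a,b}\|_2\ll n^{d-1}2^{-(r_1+r_2)n}$. That is the rate of $\tau_m$, not of $s_m$.
\end{itemize}
The self-contained version goes as follows. Group all layers with $\min(a,b)\le n$; their $\bx$- or $\by$-frequencies lie in a hyperbolic cross, so together they have rank $\ll 2^nn^{d-1}$. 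The remaining part has $L_2$ norm $\ll n^{d-1}2^{-(r_1+r_2)n}$. Hence $\tau_m(K)_2\ll m^{-r_1-r_2}(\log m)^{(r_1+r_2+1)(d-1)}$, and (\ref{KB9}) finishes.

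\paragraph{The gap: the lower bound.}
As you note yourself, your kernel only gives $(\log m)^{(r_1+r_2+1/2)(d-1)}$. So for $d\ge 2$ the stated lower bound is not established. Your Hadamard fix leaves open exactly the hard point: keeping $J_K$ a multiple of a unitary while every block polynomial has uniform norm comparable to its $L_2$ norm.

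Dense Hadamard blocks do not have this property. For the Sylvester matrix, with $\<j,k\>$ the inner product of binary digit vectors,
\[
\sum_{j,k<2^n}(-1)^{\<j,k\>}e^{i(jx+ky)}=\prod_{\ell<n}\bigl(1+e^{i2^\ell x}+e^{i2^\ell y}-e^{i2^\ell(x+y)}\bigr).
\]
At $x=y=2\pi/3$ this has modulus $7^{n/2}$, against an $L_2$ norm of $2^n$.

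An explicit construction can be rescued by mixing at the level of blocks instead:
\begin{itemize}
\item take an $N\times N$ unitary $W$ with $|W_{\bs^1\bs^2}|=N^{-1/2}$, where $N\asymp n^{d-1}$;
\item multiply it by signed bijections $\rho(\bs^2)\to\rho(\bs^1)$ that redistribute binary digits, with Rudin--Shapiro signs.
\end{itemize}
Each block is then a Rudin--Shapiro polynomial on a Hilbert cube, with uniform norm $\ll 2^{n/2}$. This allows $\sigma\asymp N^{1/2}2^{-(r_1+r_2+1/2)n}$, which supplies the missing $n^{(d-1)/2}$.

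\paragraph{The missing idea: no construction is needed.}
By (\ref{KB8}) and monotonicity of $s_i$, for any constant $A>1$,
\[
(A-1)m\,s_{m+1}(J_K)^2\ \ge\ \sum_{i=m+1}^{Am}s_i(J_K)^2\ =\ \tau_m(K)_2^2-\tau_{Am}(K)_2^2 .
\]
Put $\phi(m):=m^{-r_1-r_2}(\log m)^{(r_1+r_2+1)(d-1)}$. By Theorem \ref{VT47T2.2}, $c_1\phi(m)\le\tau_m(\bH^\br_\bq)_2\le c_2\phi(m)$.
\begin{enumerate}
\item Pick $K\in\bH^\br_\bq$ with $\tau_m(K)_2\ge c_1\phi(m)/2$.
\item For $m\ge A$ we have $\tau_{Am}(K)_2\le c_2\phi(Am)\le c_2A^{-r_1-r_2}2^{(r_1+r_2+1)(d-1)}\phi(m)$.
\item Choosing the constant $A$ large makes the right-hand side of the display at least $c_1^2\phi(m)^2/8$.
\item Hence $s_{m+1}(\bH^\br_\bq)\gg m^{-1/2}\phi(m)=m^{-r_1-r_2-1/2}(\log m)^{(r_1+r_2+1)(d-1)}$, which is the required lower bound.
\end{enumerate}
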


\begin{Remark}\label{KBR1} In the above Theorems \ref{VT47T2.1} and \ref{VT47T2.2} we impose the restriction $r_i>1/2$, $i=1,2$. 
We need this restriction for proving the upper bounds in the case of scalar $q=2$ and arbitrarily large $p$. In the case $p=q=2$ it is sufficient to assume that $r_i>0$, $i=1,2$. In the case $r_1=r_2$ Theorems \ref{VT47T2.1} and \ref{VT47T2.2} were proved in \cite{VT35}. 
\end{Remark}

{\bf The case $d=1$.}
The case $d=1$ is better studied than the general case. We now formulate the corresponding results.
The following results are from \cite{VT32}. We use the following notation for $1\le q,p\le\infty$   
 \be\label{ksi}
 \xi(q,p):= \left(\frac{1}{q} - \max\left(\frac{1}{2},\frac{1}{p}\right)\right)_+, \quad  (a)_+ :=\max(a,0).
\ee
 
 \begin{Theorem}[{\cite{VT32}}, Theorem 2]\label{BiT4} Let $d=1$ and $\bF^\br_\bq$ denote one of the classes $\bW^\br_\bq$ or $\bH^\br_\bq$. Then for $\br > \mathbf{1}$ and $1\le q_1\le p_1 \le \infty$, $1\le q_2,p_2 \le \infty$ we have 
 $$
  \tau_m(\bF^\br_\bq)_\bp \asymp m^{-r_1-r_2 + \xi(q_1,p_1)}.  
 $$
 \end{Theorem}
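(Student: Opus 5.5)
The plan is to prove the two bounds separately. The upper bound comes from a dyadic block decomposition of $K$ in which each block receives its own rank budget. The lower bound comes from an explicit kernel that reduces $\tau_m$ to a Kolmogorov width of a set of univariate trigonometric polynomials. By Theorem \ref{H}, every $K\in\bF^\br_\bq$ (both for $\bW$ and $\bH$, since $\bW^\br_\bq\subset \bH^\br_\bq B$) satisfies
$$
\|A_\bs(K)\|_\bq \ll 2^{-r_1s_1-r_2s_2},\qquad \bs=(s_1,s_2).
$$
So it is enough to prove the upper bound for the $\bH$ class and the lower bound for the $\bW$ class.

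\emph{Upper bound.} Write $K=\sum_{\bs}A_\bs(K)$. Each $A_\bs(K)$ is a trigonometric polynomial of degree $\asymp 2^{s_1}$ in $x$ and $\asymp 2^{s_2}$ in $y$. Hence it has bilinear rank $\ll 2^{\min(s_1,s_2)}$, and any bilinear approximation of it can be taken inside this space of polynomials.

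\begin{enumerate}
\item Discretize each block. Using the Marcinkiewicz theorem in each variable, approximating a block in the mixed norm $L_{p_1,p_2}$ reduces to approximating a $2^{s_1}\times 2^{s_2}$ matrix by rank-$k$ matrices, with the norm $\|\cdot\|_{\ell_{p_1}(\ell_{p_2})}$ normalized accordingly.
\item Pass to finite-dimensional widths. The argument of Lemma \ref{KBL1} carries over to matrices, so the best rank-$k$ error of a matrix is bounded by a finite-dimensional Kolmogorov width of the form $d_k(B^{N}_{q_1},\ell^{N}_{p_1})$ with $N=2^{s_1}$. Its norm in the $y$-variable is handled by the inclusion $L_{q_2}\to L_{p_2}$ on the $2^{s_2}$-dimensional polynomial space, via the Nikol'skii inequality.
\item Insert the Kashin--Gluskin width estimates. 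These give roughly
$$
d_k(B^N_{q_1},\ell^N_{p_1})\ll N^{\frac1{q_1}-\frac1{p_1}}\,(N/k)^{\text{appropriate power}},
$$
and this is where the exponent $\xi(q_1,p_1)$ enters.
\item Allocate ranks. Blocks with $s_1+s_2\le n$ get full rank $\ll 2^{\min(s_1,s_2)}$. A band with $n<s_1+s_2\le n+\kappa n$ gets rank $k_\bs$, decaying geometrically in $s_1+s_2-n$ and in $|s_1-s_2|$. All remaining blocks are approximated by $0$.
\item Sum up. The total rank is then $\ll 2^{n/2}=:m$, and the total error is a geometric series dominated by the blocks near $s_1=s_2=n/2$. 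This yields $m^{-r_1-r_2+\xi(q_1,p_1)}$. The hypothesis $\br>\mathbf 1$ guarantees that the geometric series in the far tail converge even after the losses from the Nikol'skii inequality.
\end{enumerate}

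\emph{Lower bound.} Take $K(x,y)=m^{-r_1-r_2}\,\varphi(x-y)$, where $\varphi$ is a suitably normalized trigonometric polynomial with spectrum in $[m,2m]$; for instance, a de la Vallée Poussin-type kernel scaled so that $K\in\bW^\br_\bq$.

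\begin{enumerate}
\item Reduce to a width. Arguing as in the proof of (\ref{KB2}), the functions $K(\cdot,y)$ fill out translates of $\varphi$. Therefore $\tau_{m}(K)_\bp$ is bounded below by a Kolmogorov width $d_{m}$ in $L_{p_1}$ of a translation-invariant set of polynomials of degree $\asymp m$, normalized in $L_{q_1}$.
\item Apply the known width. The classical estimate $d_m(\cT(4m)_{q_1},L_{p_1})\gg m^{\xi(q_1,p_1)}$ then gives $\tau_m(K)_\bp\gg m^{-r_1-r_2+\xi(q_1,p_1)}$.
\item Handle the $y$-norms. When $p_2,q_2$ differ, I would pass through the $L_{p_1,1}$ norm in $y$, which is the weakest, and use that the $L_{q_2}$ norm of $\varphi$ in $y$ only affects constants.
\end{enumerate}

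The main obstacle is step 4 of the upper bound, the rank allocation. The budgets $k_\bs$ must be chosen so that two things hold at once: the total rank stays of order $m$, and the width losses $N^{1/q_1-1/p_1}(N/k)^{\dots}$ add up to exactly $m^{\xi}$ with no logarithmic factor. I would first try $k_\bs\asymp 2^{n/2-\epsilon(s_1+s_2-n)}$ for a small $\epsilon$ depending on $r_1+r_2-1$. I would then check separately the cases $p_1\le2$, $q_1\le2<p_1$ and $2\le q_1<p_1$, where the Gluskin width has different forms.
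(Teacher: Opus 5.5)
\textbf{Upper bound.} The $x$-part of your argument (discretization, Kashin--Gluskin widths, geometric rank allocation) is sound. The $y$-part, however, has a genuine gap.

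In step 2 you pass from $L_{q_2}$ to $L_{p_2}$ by the Nikol'skii inequality on $\cT(2^{s_2})$. This costs $2^{s_2(1/q_2-1/p_2)_+}$ on every block, and the cost is not confined to the tail. With total rank $\asymp m=2^{n/2}$, you can afford full rank only for blocks with $\min(s_1,s_2)\le n/2+O(1)$. A block $\bs\approx(n/2+C,n/2+C)$ therefore gets rank at most a fixed fraction of its dimension. Your estimate for that block then contains the factor $m^{(1/q_2-1/p_2)_+}$ on top of $m^{-r_1-r_2+\xi(q_1,p_1)}$.

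So for $q_2<p_2$ your scheme proves only $m^{-r_1-r_2+\xi(q_1,p_1)+1/q_2-1/p_2}$, whereas the theorem asserts an exponent independent of $q_2,p_2$. The hypothesis $r_2>1$ only makes the far tail summable; it does nothing at these dominant blocks. Taking widths in $y$ instead does not help either: it brings back the full loss $2^{s_1(1/q_1-1/p_1)}$ in $x$ and still leaves a width loss in $y$.

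What is missing is a nonlinear selection in $y$, paid for out of the rank budget:
\begin{itemize}
\item Write a block as $f(x,y)=\sum_j f(x,y^j)\psi_j(y)$ by de la Vall\'ee Poussin interpolation on $N_2\asymp 2^{s_2}$ equispaced points. Then $\|\sum_j g_j(x)\psi_j(y)\|_{p_1,p_2}\ll (N_2^{-1}\sum_j\|g_j\|_{p_1}^{p_2})^{1/p_2}$ and $(N_2^{-1}\sum_j\|f(\cdot,y^j)\|_{q_1}^{q_2})^{1/q_2}\ll\|f\|_{q_1,q_2}$.
\item Keep exactly the $\ell$ columns $f(\cdot,y^j)$ of largest $L_{q_1}$-norm; this costs rank $\ell$.
\item Approximate all remaining columns from one $k$-dimensional Kashin--Gluskin subspace in $x$.
\end{itemize}
Every discarded column has norm $\ll (N_2/\ell)^{1/q_2}\|f\|_{q_1,q_2}$. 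Hence the $y$-loss drops to $(N_2/\ell)^{(1/q_2-1/p_2)_+}$. This is $O(1)$ at the dominant blocks, and in the tail it is absorbed by $2^{-r_2s_2}$ because $r_2>1$.

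\textbf{Lower bound.} This part needs repair as well, in two places.

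First, the translates $\varphi(\cdot-y)$ form a subset of $\cT(4m)_{q_1}$. So $d_m(\cT(4m)_{q_1},L_{p_1})\gg m^{\xi(q_1,p_1)}$ bounds their width from above, not from below. You must show that the closed absolutely convex hull of the translates contains a suitable ball.
\begin{itemize}
\item For a (modulated) de la Vall\'ee Poussin kernel, the reproducing identity $t=t*\varphi$ puts a multiple of the $L_1$-ball of polynomials with spectrum in a band of length $\asymp m$ into the hull.
\item After normalizing $\varphi$ in $L_{q_1}$, this yields $m^{\xi(q_1,p_1)}$ exactly when $q_1\le 2$.
\item For $q_1>2$ (where $\xi=0$ and $p_1\ge q_1>2$) it yields only $m^{1/q_1-1/2}$. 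There you need a flat-spectrum polynomial with $\|\varphi\|_\infty\ll 1$ (Rudin--Shapiro type) and an $L_2$ Schmidt-type lower bound.
\end{itemize}

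Second, the argument behind (\ref{KB2}) controls only $\sup_y$, i.e.\ the case $p_2=\infty$. In the $L_{p_1,1}$ norm you propose to pass through, the error is an average over $y$ of $\mathrm{dist}(K(\cdot,y),U)_{p_1}$, which is not a Kolmogorov width. One way to fix this:
\begin{itemize}
\item Apply a de la Vall\'ee Poussin operator in $y$ to the approximant; this preserves the rank and reproduces $K$.
\item Discretize $y$ on $N\asymp m$ points.
\item By Chebyshev's inequality, at most $N/\lambda$ of the discrete translates lie farther from $U$ than $\lambda$ times the average. Adjoin them to $U$; the dimension stays $\ll m$.
\item Conclude using that the hull of the discrete translates still contains the required ball, by the discrete interpolation formula.
\end{itemize}
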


\begin{Remark}\label{BiLB} Note that Theorem \ref{BiT4} is proved in \cite{VT32} under weaker conditions on $\br$ than above. That restriction on $\br$ is needed for the proof of the upper bounds. For the lower bounds it is sufficient to assume that $\br > (1/q_1-1/p_1, (1/q_2-1/p_2)_+)$.
\end{Remark}

 \begin{Corollary}[{\cite{VT32}}, Theorem 3.2]\label{BiC3} Under conditions of Theorem \ref{BiT4} we have 
 $$
 \sup_{K\in \bF^\br_\bq} s_m(J_K) \asymp m^{-r_1-r_2 +  \max\left(\frac{1}{2},\frac{1}{q_1}\right)-1}.
   $$
 \end{Corollary}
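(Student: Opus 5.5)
The statement has an upper and a lower bound. I would derive the upper bound from Theorem \ref{BiT4} via the Schmidt formula, and the lower bound by exhibiting an explicit convolution kernel whose singular numbers are its Fourier coefficients.

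\emph{Upper bound.} I would combine (\ref{KB9}), i.e. $s_{2n}(J_K)\le n^{-1/2}\tau_n(K)_2$, with Theorem \ref{BiT4} taken at $\bp=(2,2)$.
\begin{itemize}
\item If $q_1\le 2$, Theorem \ref{BiT4} applies directly and gives $\tau_n(\bF^\br_\bq)_{2}\ll n^{-r_1-r_2+\xi(q_1,2)}$ with $\xi(q_1,2)=1/q_1-1/2$.
\item If $q_1>2$, I would first use the embedding $\bF^\br_{(q_1,q_2)}\subset \bF^\br_{(2,q_2)}$. This holds because $\|\cdot\|_{q_1}\ge\|\cdot\|_2$ on a probability space, both for $\ff$ in the $\bW$ case and for the differences in the $\bH$ case. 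Then Theorem \ref{BiT4} with $q_1=2=p_1$ gives $\xi=0$.
\end{itemize}
In both cases $\tau_n\ll n^{-r_1-r_2+(1/q_1-1/2)_+}$. Hence
$$
s_{2n}(J_K)\ll n^{-r_1-r_2-1/2+(1/q_1-1/2)_+}=n^{-r_1-r_2+\max(1/2,1/q_1)-1}.
$$
Monotonicity of $s_m$ lets us pass from $2n$ to general $m$.

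\emph{Lower bound.} Since $\bW^\br_\bq\subset \bH^\br_\bq B$ (by Theorem \ref{H}, because $\|A_\bs(F_\br\ast\ff)\|_\bq\ll 2^{-(\br,\bs)}$), it is enough to treat the $\bW$ classes. I would use kernels of convolution type $K(x,y)=\sum_{k\in I_m} c_k e^{ik(x-y)}$ (or their real parts), where $I_m\subset[m,2m)$. For such $K$, $J_K$ is a Fourier multiplier, so $s_j(J_K)\ge \min_{k\in I_m}|c_k|$ for $j\le |I_m|\asymp m$. It then remains to choose $c_k=c\,\e_k$ with $|\e_k|=1$ and to compute the largest admissible $c$.

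On frequencies of size $\asymp m$, the operator inverse to convolution with $F_{r_1}$ in $x$ and $F_{r_2}$ in $y$ acts essentially as multiplication by $m^{r_1+r_2}$. To make this rigorous, I would write the multiplier $k^{r}e^{i r\pi/2}$ restricted to $[m,2m)$ as $m^{r}$ times a multiplier that is uniformly bounded on $L_q$ for all $1\le q\le\infty$. This uses a de la Vallée Poussin-type smoothing, so the coefficients should carry a smooth cutoff rather than a sharp one. The membership condition then reduces to
$$
c\,m^{r_1+r_2}\Big\|\sum_k \e_k e^{ik(x-y)}\Big\|_{(q_1,q_2)}\ll 1 .
$$
The $(q_1,q_2)$-norm of a function of $x-y$ equals its univariate $L_{q_1}$ norm, since the outer norm is over a translate. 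This leads to two cases.
\begin{itemize}
\item For $q_1<2$, I take all $\e_k=1$, i.e. a de la Vallée Poussin kernel, whose $L_{q_1}$ norm is $\asymp m^{1-1/q_1}$. This gives $c\asymp m^{-r_1-r_2-1+1/q_1}$.
\item For $q_1\ge2$, I take Rudin--Shapiro signs, for which $\|\sum_k \e_k e^{ikt}\|_\infty\ll m^{1/2}$. This gives $c\asymp m^{-r_1-r_2-1/2}$, valid for every $q_1$.
\end{itemize}
Together these yield $s_m\gg m^{-r_1-r_2+\max(1/2,1/q_1)-1}$.

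\emph{Main obstacle.} The technical point I expect to be hardest is combining the smooth cutoff, needed for bounded $L_1$ and $L_\infty$ multipliers, with the Rudin--Shapiro signs while keeping the sup-norm bound $m^{1/2}$. I would handle this by writing the kernel as a de la Vallée Poussin-type operator applied to a Rudin--Shapiro polynomial. On the frequency range where the smoothing multiplier equals one, the coefficients stay unimodular, which keeps $\gg m$ singular numbers at level $c$.
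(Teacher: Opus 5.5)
Your proof is correct. The upper bound follows the route the paper implies. The paper quotes this result from \cite{VT32} and lists it as a consequence of Theorem \ref{BiT4}, in the same way that Corollary \ref{KBC1} is read off from Theorem \ref{VT47T2.1}. That route is (\ref{KB9}) plus Theorem \ref{BiT4} at $\bp=(2,2)$, and your embedding $\bF^\br_{(q_1,q_2)}\subset\bF^\br_{(2,q_2)}$ correctly covers $q_1>2$.

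The paper gives no argument for the lower bound, so your explicit kernels $\psi(x-y)$ are an independent route. For $q_1\le 2$ there is an alternative that stays inside Theorem \ref{BiT4}. Set $\beta:=r_1+r_2-1/q_1+1/2$ and take $K_n$ with $\tau_n(K_n)_2\gg n^{-\beta}$. Then use (\ref{KB8}) together with your uniform bound $s_i\ll i^{-\beta-1/2}$ to discard the tail $i>Mn$ for a large constant $M$, which gives $s_{n+1}(J_{K_n})\gg n^{-\beta-1/2}$. That argument fails for $q_1>2$: Theorem \ref{BiT4} requires $q_1\le p_1$, so it gives no $L_2$ lower bound there. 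This is where your Rudin--Shapiro construction is genuinely needed.

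The ``main obstacle'' you anticipate does not actually arise. Choose $\ff(x,y):=\psi(x-y)$ first, with
\begin{itemize}
\item $\psi(t)=c\,m^{1/q_1-1}e^{i3mt}\cV_m(t)$ if $q_1<2$;
\item $\psi(t)=c\,m^{-1/2}e^{imt}R_m(t)$ if $q_1\ge 2$, where $R_m$ is a Rudin--Shapiro polynomial of degree $m-1$.
\end{itemize}
Then set $K:=F_\br\ast\ff$. This $K$ lies in $\bW^\br_\bq$ by definition, and $J_K$ is a multiplier whose eigenvalues have modulus $|\hat\psi(k)|k^{-r_1-r_2}$. No Bernstein-type multiplier estimate is needed.

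Even in your direction, starting from $K$, the smoothed multiplier only has to agree with $k^{r_1+r_2}$ on the spectrum of $K$. So a sharp-block Rudin--Shapiro polynomial needs no further smoothing. The de la Vall\'ee Poussin cutoff is needed only for $q_1<2$, and for a different reason than you give. A sharp block with all $\e_k=1$ is a Dirichlet kernel, whose $L_1$ norm is $\asymp\log m$, so at $q_1=1$ you would lose a logarithm. With the de la Vall\'ee Poussin coefficients, the minimum of $|c_k|$ must then be taken only over the plateau where they equal $1$.
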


{\bf Some known results on the Kolmogorov widths of classes $\bW^K_q$.}

We begin with the case of univariate functions ($d=1$), in which case the kernel $K$ is a function on two variables. 
 The following  results are proved in \cite{VT32}.
 
  \begin{Theorem}[{\cite{VT32}}, Theorem 4.2]\label{BiT5} Let $d=1$ and $\bF^\br_1$ denote one of the classes $\bW^\br_1$ or $\bH^\br_1$. Then for $1\le q,p \le \infty$ and $\br > (1,1+\max(1/2,1/q))$   we have 
 $$
\sup_{K\in \bF^\br_1}  d_m(\bW^K_q)_p \asymp m^{-r_1-r_2 + \xi(q,p)} 
 $$
 with $\xi(q,p)$ defined in (\ref{ksi}).
 \end{Theorem}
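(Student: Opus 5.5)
The statement is Theorem \ref{BiT5}: for $d=1$, $\bF^\br_1$ either $\bW^\br_1$ or $\bH^\br_1$, and $\br>(1,1+\max(1/2,1/q))$, we want
$$
\sup_{K\in \bF^\br_1} d_m(\bW^K_q)_p \asymp m^{-r_1-r_2+\xi(q,p)}.
$$
I would prove the upper and lower bounds separately, with Lemma \ref{KBL1} as the bridge in both cases.

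\textbf{Upper bound.} By (\ref{KB1a}) we have $d_m(\bW^K_q,L_p)\le \tau_m(K)_{p,q'}$. It therefore suffices to bound $\tau_m(\bF^\br_1)_{(p,q')}$.
\begin{itemize}
\item Theorem \ref{BiT4} with $\bq=(1,1)$ and $\bp=(p,q')$ gives the rate $m^{-r_1-r_2+\xi(1,p)}$. This is too crude when $q>1$.
\item Instead I would redo the construction behind Theorem \ref{BiT4}, trading smoothness in $y$ for integrability. Since the class is only in $L_1$ in the $y$ variable, embed it: by the Nikol'skii inequality for the dyadic blocks $A_\bs$, a function in $\bF^{(r_1,r_2)}_{(1,1)}$ lies in $\bF^{(r_1,\,r_2-1+1/q')}_{(1,q')}$, using Theorem \ref{H} for $\bH$ and the analogous embedding for $\bW$.
\item Next, use duality in the first variable together with the factorization trick (\ref{KB6}). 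Write $J_K$ as a composition of the operator with kernel $F_{r_2}$ (mapping $L_q\to$ smooth functions) and an $L_1$-bounded part. Widths of univariate Sobolev classes are classical: $d_n(W^{a}_q,L_p)\asymp n^{-a+\xi(q,p)}$.
\item Combining these, I expect the $y$-smoothness $r_2$ to contribute $m^{-r_2}$ exactly, with no loss. The $x$-direction contributes $m^{-r_1+\xi(q,p)}$, where the exponent $\xi(q,p)$, rather than $\xi(1,p)$, appears because we approximate $f=\int K(\cdot,y)\varphi(y)\,dy$ and not $K$ itself.
\end{itemize}
Concretely, I would expand $K=\sum_{\bs}A_\bs(K)$ and approximate each block. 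For a block with $|s_1|$ small, take the exact span of trigonometric polynomials in $x$ of degree $2^{s_1}$. For larger $s_1$, use the Kashin--Gluskin-type finite-dimensional width estimates for $d_n(B^{N}_q,\ell^N_p)$ applied to $\int A_\bs(K)(x,y)\varphi(y)\,dy$. Then sum over blocks with the standard dimension allocation. The condition $r_2>1+\max(1/2,1/q)$ ensures the $y$-block sums converge after the $L_1\to L_{q}$ Nikol'skii loss.

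\textbf{Lower bound.} By (\ref{KB1}) it suffices, for $q=1$, to exhibit a $K$ with $\tau_m(K)_{p,\infty}\gg m^{-r_1-r_2+\xi(1,p)}$, and Remark \ref{BiLB} gives this. For general $q$ I would take a concrete kernel of the form
$$
K(x,y)=c\,m^{-r_1-r_2}\sum_{k\in I} e^{ik(x-y)}\cdot(\text{bump}),
$$
with $|I|\asymp m$ frequencies placed at level $2^s\asymp m$. The normalization constant $c$ is chosen so that $K\in\bF^\br_1$; the $\bW$ and $\bH$ cases are handled simultaneously via Theorem \ref{H}. For this $K$, $\bW^K_q$ contains a scaled copy of the finite-dimensional ball $B^{2m}_q$ embedded in trigonometric polynomials. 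Known lower bounds for $d_m(B^{2m}_q,\ell^{2m}_p)$, transferred through Marcinkiewicz-type discretization, then give exactly $m^{\xi(q,p)}$.

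The main obstacle is the upper bound in the regime $p>2$, $q<2$. There the exponent is $\xi=1/q-1/2$, and one must invoke Kashin--Gluskin widths on each dyadic block while making sure the $L_1$-normalization in both variables does not cost an extra $m^{1/2}$. This is exactly where the hypothesis $r_2>1+\max(1/2,1/q)$ will be used.
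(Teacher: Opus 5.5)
The paper gives no proof of this statement; it is quoted from \cite{VT32}. Your lower bound is sound in outline. A convolution-type kernel $K(x,y)=c\,m^{-r_1-r_2}G(x-y)$, with $G$ a de la Vall\'ee Poussin type polynomial of degree $\asymp m$, makes $\bW^K_q$ contain $c\,m^{-r_1-r_2}$ times the unit $L_q$-ball of $\cT(m)$, whose widths in $L_p$ are $\gg m^{\xi(q,p)}$. Membership $K\in\bW^\br_1$ should be checked directly with a Bernstein inequality for $D^{r_1}_xD^{r_2}_y$, since Theorem \ref{H} only describes the $\bH$ classes. The $\bH$ case then follows from $\bW^\br_1\subset\bH^\br_1B$.

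The upper bound, however, has a genuine gap. Your reduction ``it suffices to bound $\tau_m(\bF^\br_1)_{(p,q')}$'' cannot succeed, because Theorem \ref{BiT4} is two-sided. With $\bq=(1,1)$, $\bp=(p,q')$ (admissible, since $\br>\mathbf 1$) it gives $\tau_m(\bF^\br_1)_{(p,q')}\asymp m^{-r_1-r_2+\xi(1,p)}$. But $\xi(1,p)>\xi(q,p)$ whenever $q>1$ and $p>1$; for $q=p=2$ you get $m^{-r_1-r_2+1/2}$ against the target $m^{-r_1-r_2}$. So (\ref{KB1a}) is intrinsically lossy here. Redoing the construction behind Theorem \ref{BiT4} cannot fix this, and neither can the embedding into $\bF^{(r_1,r_2-1/q)}_{(1,q')}$, which only lowers smoothness.

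Your later steps do not supply the missing mechanism either.
\begin{itemize}
\item In the factorization $J_K=I^{r_1}_x\circ J_\phi\circ I^{(r_2,-r_2)}$ with $\|\phi\|_1\le 1$, the middle factor is bounded only from $L_\infty$ to $L_1$. So the $x$-factor acts on $L_1$, and (\ref{KB6}) yields at best the exponent $\xi(1,p)+(1/q-1/2)_+$. Your claimed split ($m^{-r_2}$ from $y$, $m^{-r_1+\xi(q,p)}$ from $x$) is unsupported.
\item The blockwise Kashin--Gluskin scheme fails for a similar reason. From $\|A_\bs(K)\|_1\ll 2^{-r_1s_1-r_2s_2}$ alone, the image of $B_{L_q}$ under the block $\bs=(s,s)$ can only be placed in $2^{s/q}2^{-(r_1+r_2)s}$ times the unit $L_1$-ball of $\cT(2^{s+1})$. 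Its widths in $L_p$ at half the dimension are of order $2^{s\,\xi(1,p)}$. This overshoots the target by $2^{s(1/q+\xi(1,p)-\xi(q,p))}$, i.e.\ by $2^s$ when $q\le 2\le p$.
\end{itemize}
Block $L_1$ norms cannot detect that the extremal (convolution-type) kernels send $B_{L_q}$ into only $2^{-(r_1+r_2)s}$ times the $L_q$-ball.

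What is missing is a global Hilbert-space step. Schmidt's formula (\ref{KB8})--(\ref{KB9}) upgrades the $L_2$ bilinear bound $\tau_n(g)_2\ll n^{-|\br'|+1/2}$ to $s_n(J_g)\ll n^{-|\br'|}$ for $g\in\bF^{\br'}_1$, where $|\br'|$ denotes the sum of the coordinates of $\br'$ (Corollary \ref{BiC3} with $q_1=1$). This recovers exactly the factor $n^{1/2}$ that the bilinear route loses. One then factors through $L_2$ as in the proof of Theorem \ref{KBT1}. Write $J_K=I^a_x\circ J_g\circ I^{(b,-b)}$ with $g\in\bF^{\br'}_1$, $\br'=(r_1-a,r_2-b)$, and apply (\ref{KB6}) twice:
\[
d_{4n}(\bW^K_q,L_p)\le d_n(\bW^a_2,L_p)\,s_{n+1}(J_g)\,d_n(\bW^b_q,L_2)\ll n^{-r_1-r_2+(1/q-1/2)_+}.
\]
This is the correct rate for $p\ge 2$, for suitable $a>1/2$ and $b$. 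For $p<2$ and $q>1$ it still gives only $1/q-1/2$ instead of $(1/q-1/p)_+$, and your proposal contains no argument for that range.
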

 
 For $\bq =(q_1,q_2)$, $\bp=(p_1,p_2)$, $1\le q_1 \le p_1 \le \infty$, $1\le q_2,p_2 \le \infty$ denote
 $$
 \br(\bq,\bp) := \begin{cases} (1/q_1-1/p_1,(1/q_2-1/p_2)_+), & 1\le q_1\le p_1 \le 2, \\
 (1/q_1,1/q_2), & 2\le q_1\le p_1\le \infty, p_1>2,\\
 (1/q_1,\max(1/2,1/q_2)), & 1\le q_1<2< p_1 \le \infty.
 \end{cases}
 $$
 
   \begin{Theorem}[{\cite{VT32}}, Theorem 4.1]\label{BiT5a} Let $d=1$ and $\bF^\br_\bq$ denote one of the classes $\bW^\br_\bq$ or $\bH^\br_\bq$. Then for $\bp=(p,\infty)$, $1\le q_1 \le p \le \infty$, $1\le q_2\le \infty$  and $\br > \br(\bq,\bp)$   we have 
 $$
\sup_{K\in \bF^\br_\bq}  d_m(\bW^K_1)_p \asymp m^{-r_1-r_2 + \xi(q_1,p)} 
 $$
 with $\xi(q,p)$ defined in (\ref{ksi}).
 \end{Theorem}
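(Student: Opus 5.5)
By Lemma \ref{KBL1}, identity (\ref{KB1}), for every $K$ we have $d_m(\bW^K_1,L_p)=\tau_m(K)_{p,\infty}$. Hence
$$
\sup_{K\in \bF^\br_\bq} d_m(\bW^K_1)_p=\tau_m(\bF^\br_\bq)_{(p,\infty)},
$$
and the statement is a bilinear approximation result with $\bp=(p,\infty)$. So the plan is to prove $\tau_m(\bF^\br_\bq)_{(p,\infty)}\asymp m^{-r_1-r_2+\xi(q_1,p)}$ under $\br>\br(\bq,\bp)$. Theorem \ref{BiT4} cannot be quoted directly, since it assumes $\br>\mathbf 1$, so I treat the two bounds separately.

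\textbf{Lower bound.} This should follow from Remark \ref{BiLB}, which says the lower bound of Theorem \ref{BiT4} holds once $\br>(1/q_1-1/p_1,(1/q_2-1/p_2)_+)$. With $p_1=p$ and $p_2=\infty$ this threshold is $(1/q_1-1/p,1/q_2)$. In each of the three cases defining $\br(\bq,\bp)$ we have $\br(\bq,\bp)\ge (1/q_1-1/p,1/q_2)$ coordinatewise: in the first case they are equal, and in the other two the first coordinate is $1/q_1$ and the second is $1/q_2$ or $\max(1/2,1/q_2)$. So the hypothesis of the Remark is met.

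\textbf{Upper bound.} I would expand a fixed $K\in\bF^\br_\bq$ into dyadic blocks $A_\bs(K)$, $\bs=(s_1,s_2)$. Theorem \ref{H} gives $\|A_\bs(K)\|_\bq\ll 2^{-r_1s_1-r_2s_2}$ for both types of classes; for $\bW$ this follows from the embedding into $\bH$.

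\begin{enumerate}
\item \emph{Rank of a block.} Each $A_\bs(K)$ is a trigonometric polynomial of degree about $2^{s_1}$ in $x$ and $2^{s_2}$ in $y$. Sampling in one variable (de la Vall\'ee Poussin interpolation) shows it is a sum of at most $C\min(2^{s_1},2^{s_2})$ products $u(x)v(y)$.
\item \emph{Passing to the $(p,\infty)$ norm.} Nikol'skii's inequality in $y$ costs a factor $2^{s_2/q_2}$. In $x$ it costs $2^{s_1(1/q_1-1/p)}$ when $p\le 2$.
\item \emph{Allocation of the $m$ terms.} I keep all blocks with $s_1+s_2\le n$, where $2^n\asymp m$, and use $\ll m$ terms per level set of $s_1+s_2$. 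This gives a geometric sum in $s_2$, which converges exactly because $r_2>1/q_2$, or $r_2>\max(1/2,1/q_2)$ in the third case.
\item \emph{The tail.} Blocks with $s_1+s_2>n$ are discarded, and their contribution in the $(p,\infty)$ norm is summed using $r_1>1/q_1-1/p$, respectively $r_1>1/q_1$.
\end{enumerate}

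The main obstacle is the range $p>2$, where the exponent must be $\xi(q_1,p)$, namely $(1/q_1-1/2)_+$, and not $1/q_1-1/p$. For this, the $x$-part of each block cannot just be kept or discarded. Instead I would approximate it in $L_p$ uniformly in the $y$-parameter by a common subspace, using Kashin--Gluskin bounds for the widths $d_k(B^N_{q_1},\ell^N_p)$ after Marcinkiewicz discretization of trigonometric polynomials. The delicate point is that the same subspace must serve all $y$ simultaneously. I would handle this by discretizing $y$ at $\asymp 2^{s_2}$ points, which is affordable because of the $\ell_\infty$ norm in $y$. I would then check that the resulting logarithm-free geometric sums close under the stated conditions $\br>\br(\bq,\bp)$.
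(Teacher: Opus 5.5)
Your reduction through Lemma~\ref{KBL1} and your lower bound through Remark~\ref{BiLB} are correct, including the coordinatewise check $\br(\bq,\bp)\ge(1/q_1-1/p,\,1/q_2)$. That is all the paper itself provides: it quotes the theorem and only remarks that for $\br>\mathbf 1$ it follows from Lemma~\ref{KBL1} and Theorem~\ref{BiT4}.

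Your upper bound does not work, because the hyperbolic cross is the wrong truncation for bilinear approximation. Split $\{s_1+s_2\le n\}$ into $\{s_1\le n/2\}$ and $\{s_2<n/2\}$; this shows the kept part has bilinear rank only $\ll 2^{n/2}$. Meanwhile the discarded set $\{s_1+s_2>n\}$ contains the blocks $(s_1,0)$ with $s_1>n$. For example, take $K(x,y)=c\,2^{-n(r_1+1-1/q_1)}\cA_{n+2}(x)$. It belongs to $\bF^\br_\bq$ for a suitable $c=c(\br)>0$, and your scheme discards it entirely. Its norm is $\|K\|_{(p,\infty)}\asymp 2^{-n(r_1-1/q_1+1/p)}\asymp m^{-r_1+1/q_1-1/p}$. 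This exceeds the target $m^{-r_1-r_2+\xi(q_1,p)}$ by at least $m^{r_2}$, since $\xi(q_1,p)\le 1/q_1-1/p$. The relevant truncation is the cross $\{\min(s_1,s_2)\le n\}$. A full strip $\{s_2=t\}$, with all $s_1$, has rank $\ll 2^t$. So the cross costs $\ll 2^n\asymp m$ terms and leaves only $\{s_1>n,\ s_2>n\}$, which carries the factor $2^{-(r_1+r_2)n}$.

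Even after this correction there is a second gap. Your step 2 charges every remaining block the factor $2^{s_2/q_2}$. Summing over $s_1,s_2>n$ then gives $m^{-r_1-r_2+1/q_1-1/p+1/q_2}$ for $p\le 2$. The hypothesis $r_2>1/q_2$ only makes the series converge; it does not remove $m^{1/q_2}$, while the asserted rate does not depend on $q_2$. Hence the blocks must be approximated partially in $y$ for every $p$, not only in $x$ for $p>2$.

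One way to do this:
\begin{enumerate}
\item Write $A_\bs(K)(x,y)=\sum_j g_j(x)\psi_j(y)$, where $g_j:=A_\bs(K)(\cdot,y_j)$ at $\asymp 2^{s_2}$ equispaced nodes $y_j$ and the $\psi_j$ are normalized shifted de la Vall\'ee Poussin kernels.
\item Then $\sup_y\sum_j|\psi_j(y)|\ll 1$, so dropping the terms with $j\notin S$ costs $\ll\max_{j\notin S}\|g_j\|_p$ in the $(p,\infty)$-norm.
\item The Marcinkiewicz inequality controls the $g_j$ only on average: $\big(2^{-s_2}\sum_j\|g_j\|_{q_1}^{q_2}\big)^{1/q_2}\ll 2^{-r_1s_1-r_2s_2}$.
\item Keep the $k_\bs:=\lfloor 2^{n-\kappa(s_1+s_2-2n)}\rfloor$ largest $g_j$. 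This costs $\ll 2^n$ terms in total, and the remaining ones satisfy $\|g_j\|_{q_1}\ll(2^{s_2}/(k_\bs+1))^{1/q_2}2^{-r_1s_1-r_2s_2}$.
\end{enumerate}
For $p\le 2$ and small $\kappa>0$, the resulting double series then closes exactly under $r_1>1/q_1-1/p$, $r_2>1/q_2$, i.e.\ under $\br>\br(\bq,\bp)$. For $p>2$ this selection has to be combined with your Kashin--Gluskin step in $x$.

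As written, your proposal handles neither the term allocation nor the $q_2$-loss, so the upper bound is not established. For $\br>\mathbf 1$ you could simply have quoted Theorem~\ref{BiT4}, as the paper does.
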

 
 Note that in the case $\br > \mathbf{1}$ Theorem \ref{BiT5a} follows from Lemma \ref{KBL1} and Theorem \ref{BiT4}
 (see also Remark \ref{BiLB}).

 In the above Theorem \ref{BiT5} we consider the case of classes $\bF^\br_1$. Some results on the classes $\bF^\br_\bq$
 are obtained in \cite{VT47} (see Theorem 3.1' there). We formulate that result as Theorem \ref{BiT5b} and
 refer the reader to the paper \cite{VT32} for further results and historical comments on bilinear approximation of functions on two variables with mixed smoothness. Denote $$\br(\bq) := ((1/q_1-1/2)_+,(1/q_2-1/2)_+).$$

  \begin{Theorem}[{\cite{VT47}}, Theorem 3.1']\label{BiT5b} Let $d=1$ and $\bF^\br_\bq$ denote one of the classes $\bW^\br_\bq$ or $\bH^\br_\bq$, $\mathbf{1} \le \bq \le \infty$. Then for $2\le a\le \infty$, $1\le b\le \infty$ under assumption that $\br > \br(\bq)$ for $1\le b\le 2$ and $\br > \br(\bq)+ (1/2,0)$ for $b>2$ we have 
 $$
\sup_{K\in \bF^\br_\bq}  d_m(\bW^K_a)_b \asymp m^{-r_1-r_2 + \max(1/q_1,1/2) -1} .
 $$
 \end{Theorem}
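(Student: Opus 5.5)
The plan is to prove the upper and lower bounds separately. The upper bound reduces to singular numbers (Corollary \ref{BiC3}) plus a Nikol'skii-type correction for $b>2$. The lower bound comes from an explicit extremal kernel built out of trigonometric polynomials.

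\emph{Upper bound, $1\le b\le 2$.} Since we work on probability spaces and $a\ge 2$, we have $\|\ff\|_2\le\|\ff\|_a$, so $\bW^K_a\subset J_K(B_{L_2})$. Also $\|\cdot\|_b\le\|\cdot\|_2$ for $b\le 2$. By (\ref{KB7}),
$$
d_m(\bW^K_a)_b\le d_m(J_K(B_{L_2}),L_2)=s_{m+1}(J_K).
$$
Corollary \ref{BiC3} then gives $s_{m+1}(J_K)\ll m^{-r_1-r_2+\max(1/q_1,1/2)-1}$ uniformly over $K\in\bF^\br_\bq$. Corollary \ref{BiC3} is stated under the hypotheses of Theorem \ref{BiT4}. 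I would check that its proof (via (\ref{KB9}) and the $L_2$ bilinear bound) only needs $\br>\br(\bq)$, since $L_2$ approximation of the blocks $A_\bs(K)$ requires smoothness only beyond $(1/q_i-1/2)_+$.

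\emph{Upper bound, $b>2$.} Here I decompose $K$ dyadically in the variable $x$: $K=\sum_{s\ge0}K_s$ with $K_s:=\cA_s\ast_x K$. Then $J_{K_s}$ maps into the univariate trigonometric polynomials $\cT(2^s)$. By Theorem \ref{H} (and its $\bW$ analogue), $K_s$ belongs to a class of the same type whose $x$-norm gains a factor $2^{-r_1 s}$. The image $J_{K_s}(B_{L_2})$ lies in $\cT(2^s)$, and the Nikol'skii inequality on $\cT(2^s)$ converts an $L_2$ width into an $L_b$ width at the cost $2^{s(1/2-1/b)}\le 2^{s/2}$. Hence
$$
d_{n_s}(J_{K_s}(B_{L_2}),L_b)\ll 2^{s/2}s_{n_s+1}(J_{K_s}).
$$
I then allocate dimensions $n_s$ with $\sum_s n_s\le m$: $n_s$ comparable to $m$ for $2^s\le m$, decaying geometrically beyond that, and $n_s=0$ for large $s$. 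The factor $2^{s/2}$ is absorbed by the extra half unit of smoothness in $x$, which is exactly the hypothesis $\br>\br(\bq)+(1/2,0)$. I expect this bookkeeping to be the main obstacle: the singular number bound for $K_s$ has to be localized in $s$ (smoothness in $x$ beyond $2^s$ together with smoothness in $y$) so that the sum over $s$ still gives $m^{-r_1-r_2+\max(1/q_1,1/2)-1}$ without a logarithm.

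\emph{Lower bound.} It suffices to take $b=1$, $a=\infty$, because $d_m(\bW^K_a)_b$ decreases as $a$ increases and increases with $b$. I would use a convolution-type kernel $K(x,y)=c\,m^{-\gamma}g(x-y)$. For $q_1\ge2$ take $g(t)=\sum_{m<|k|\le 2m}e^{ikt}$ with $\gamma=r_1+r_2+1/2$; for $q_1<2$ take $g$ a de la Vall\'ee Poussin-type polynomial $\cV_{2m}$ localized to frequencies of size about $m$, with $\gamma=r_1+r_2+1-1/q_1$. Membership in $\bF^\br_\bq$ follows from Bernstein inequalities and (\ref{FKm}). Then $J_K$ acts as a Fourier multiplier of size $m^{-\gamma}$ on about $m$ frequencies, so $\bW^K_\infty$ contains $c\,m^{-\gamma}$ times the set of polynomials with spectrum in $\{m<|k|\le2m\}$ and $L_\infty$ norm at most a constant times $m^{1/2}$ (scaled appropriately). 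The known lower bound for Kolmogorov widths of the $L_\infty$-ball of $\cT(2m)$ in $L_1$, of order $1$ at dimension $m$ (via a Marcinkiewicz-type discretization argument), then yields the matching order $m^{-r_1-r_2+\max(1/q_1,1/2)-1}$ once the normalizations are tracked.
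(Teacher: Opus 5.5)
This theorem is only quoted in the paper (from \cite{VT47}), so there is no proof there to compare with, and I judge your argument on its own terms.

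\textbf{Case $1\le b\le 2$.} Your reduction to $s_{m+1}(J_K)$ is fine, provided you use the singular-number bound under the weak hypothesis $\br>\br(\bq)$. Corollary \ref{BiC3} as quoted assumes $\br>\mathbf 1$, so that stronger form is a genuine dependency.

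\textbf{Case $b>2$: the Nikol'skii step fails.} The bound $d_n(J_{K_s}(B_{L_2}),L_b)\ll 2^{s(1/2-1/b)}s_{n+1}(J_{K_s})$ is too lossy to give the claimed order, for any allocation $n_s$. Take $\bq=(2,2)$, let $N=2^s$ be the least power of two with $N\ge 4m$, and set
\[
K(x,y)=c\,N^{-r_1-r_2-1/2}\sum_{N<|k|\le 2N}e^{ik(x-y)}\in\bW^\br_2 .
\]
Then $J_K$ has $\asymp N$ equal singular values of size $\asymp N^{-r_1-r_2-1/2}$. With $n_s\le m$ your bound is $\asymp N^{-r_1-r_2-1/b}$, which exceeds the target $m^{-r_1-r_2-1/2}$ by $m^{1/2-1/b}$. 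For $b=\infty$ you only recover the bilinear order $m^{-r_1-r_2}$. The true width for this $K$ is $\ll m^{-r_1-r_2-1/2}$, by the Kashin--Gluskin bound for $d_n(B_2^N,\ell_\infty^N)$. The extra half derivative in $x$ only makes the series over $s$ converge; it cannot repair a loss that occurs at $2^s\asymp m$.

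You need a Kashin-type width bound in place of Nikol'skii. The cleanest route is the factorization behind Lemma \ref{KBL2}:
\begin{enumerate}
\item Pick $1/2<u<r_1-(1/q_1-1/2)_+$. This is exactly where $\br>\br(\bq)+(1/2,0)$ is used.
\item Write $J_K=I^u_xJ_g$ with $g=D^u_xK\in\bF^{(r_1-u,r_2)}_\bq$. For $\bH$, argue as in Lemma \ref{KBL3}.
\item Apply (\ref{KB6}):
\[
d_{2m}(\bW^K_a)_b\le d_m(\bW^u_2,L_b)\,s_{m+1}(J_g)\ll m^{-u}\cdot m^{-(r_1-u)-r_2+\max(1/q_1,1/2)-1}.
\]
\end{enumerate}
Here you use the univariate bound $d_m(\bW^u_2,L_\infty)\ll m^{-u}$ for $u>1/2$; for $d=1$ there is no logarithm, unlike (\ref{KolWb}).

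\textbf{Lower bound: a second gap for $q_1>2$.} Your kernel for $q_1\ge 2$ is not in the class. Since $\|\sum_{m<|k|\le 2m}e^{ikt}\|_{q_1}\asymp m^{1-1/q_1}$, you get $\|D^{r_1}_xD^{r_2}_yK\|_\bq\asymp m^{1/2-1/q_1}$. After renormalizing you only obtain $m^{-r_1-r_2+1/q_1-1}$. Your construction therefore proves the exponent with $1/q_1$ in place of $\max(1/q_1,1/2)$, which is the claim only for $q_1\le 2$. There your argument, with $\cV_{2m}$ for $q_1<2$, is fine.

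For $q_1>2$, e.g.\ $\bq=\infty$, you need a kernel whose $x$-sections satisfy $\|\cdot\|_{q_1}\asymp\|\cdot\|_2$. An example is a Rudin--Shapiro-type $g$ with $\|g\|_\infty\ll m^{1/2}$. But then $J_K$ is a $\pm1$ Fourier multiplier on the frequency block whose norm on $L_\infty$ is of order $m^{1/2}$. So $\bW^K_\infty$ no longer contains $c\,m^{-r_1-r_2-1/2}$ times the $L_\infty$-ball of block polynomials, and your last step collapses. An $L_2$-ball containment of the right radius fails for the same reason.

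What remains is to show
\[
d_m(\bW^K_\infty)_1=\inf_L\sup\{\|J_K^*h\|_1:\ \|h\|_\infty\le 1,\ h\perp L\}\gg m^{-r_1-r_2-1/2}
\]
for such a kernel. This needs an additional ingredient, for instance a Gluskin or partial-colouring type argument that finds, in every subspace of codimension $\le m$, a bounded $h$ for which $J_K^*h$ is spread out. Your sketch does not provide it.
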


Here is an analog of Theorem \ref{BiT5}, which holds for $d=1$, in the case $d>1$. 

\begin{Theorem}[{\cite{VT47}}, Theorem 3.2]\label{BiT5d} Let $d\in\bbN$ and $\mathbf{2} \le \bq \le \infty$, $2\le a <\infty$, $1<b<\infty$. 
Assume that in the case $b\in (1,2]$ we have $r_i>0$, $i=1,2$, and in the case $b\in (2,\infty)$ we have $r_1>1/2$, $r_2>0$. Then
 $$
\sup_{K\in \bW^\br_\bq}  d_m(\bW^K_a)_b \asymp (m^{-1}(\log m)^{d-1})^{r_1+r_2}m^{-1/2} .
 $$
 \end{Theorem}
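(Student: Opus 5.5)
The plan is to prove the two inequalities in Theorem~\ref{BiT5d} separately: the upper bound for $b\in(1,2]$ and for $b\in(2,\infty)$, and a lower bound that is uniform in $a,b$.

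\textbf{Upper bound.} Since $a\ge 2$, the unit ball of $L_a$ lies in the unit ball of $L_2$, so $\bW^K_a\subset \bW^K_2$. I would then factor $J_K$ through $L_2$.

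First take $b\le 2$. Write $J_K$ as a composition $J_K=QP$, where $P$ is a Hilbert--Schmidt-type operator and $Q$ maps $L_2$ to $L_b$. The Hilbert-space piece is controlled by singular numbers via (\ref{KB7}). Corollary~\ref{KBC1} gives
$$
s_m(J_K)\ll m^{-r_1-r_2-1/2}(\log m)^{(r_1+r_2)(d-1)}
$$
for $K\in\bW^\br_\bq$, $\bq\ge \mathbf 2$. Remark~\ref{KBR1} allows $r_i>0$ in the case $p=q=2$. Since $\|\cdot\|_b\le\|\cdot\|_2$ for $b\le2$, this yields $d_m(\bW^K_a)_b\le d_m(\bW^K_2)_2=s_{m+1}(J_K)$, which is the required bound.

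Now take $2<b<\infty$. Here I would use (\ref{KB1a}) in the form
$$
d_m(\bW^K_2,L_b)\le \tau_m(K)_{b,2},
$$
but this alone gives only $m^{-r_1-r_2}(\log m)^{\cdots}$ by Theorem~\ref{VT47T2.1} and loses the factor $m^{-1/2}$. So instead I would split $K$ dyadically in the $\bx$-variables using the hyperbolic-cross blocks $A_\bs$, applied in $\bx$ only. On the low-frequency hyperbolic cross in $\bx$ of dimension $\asymp n(\log n)^{d-1}$ the subspace is fixed. On each remaining block I would approximate by rank-$k$ truncations of the Schmidt expansion of the block operator. Nikol'skii-type and Littlewood--Paley estimates then move from $L_2$ to $L_b$; this is where the condition $r_1>1/2$ is used. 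Combining via (\ref{KB6}), with the factorization $L_2\to L_2$ (singular numbers) followed by an $L_2\to L_b$ step of the smoothing in $\bx$, should give the product $m^{-1/2}\cdot m^{-r_1-r_2}(\log m)^{(d-1)(r_1+r_2)}$.

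\textbf{Lower bound.} It suffices to treat $a=\infty$ and $b$ close to $1$, since $d_m(\bW^K_a)_b$ is monotone in both parameters. For a fixed $n$, choose $K(\bx,\by)=\sum_{\bk\in Q_n}c\,e^{i(\bk,\bx)}e^{-i(\bk,\by)}$, a diagonal kernel over a hyperbolic cross $Q_n$ with $|Q_n|\asymp 2^n n^{d-1}\asymp 4m$. Normalize $c$ so that $K\in\bW^\br_\bq$:
$$
c\asymp 2^{-n(r_1+r_2)}|Q_n|^{-1/2}\quad\text{(since }\bq\ge\mathbf2\text{ is handled by the Dirichlet-kernel norm in }L_\infty).
$$
Then $\bW^K_\infty\supset c\,\{\text{trigonometric polynomials } t\in\cT(Q_n):\ \|t\|_\infty\le1\}$. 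Known lower bounds for Kolmogorov widths of hyperbolic-cross polynomial classes in $L_b$, $b>1$, give a lower bound $\gg 1$ for $m\le |Q_n|/2$ (via Riesz projection/duality from the $L_2$ case). Hence the lower bound is $\gg c\asymp (m^{-1}(\log m)^{d-1})^{r_1+r_2}m^{-1/2}$.

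\textbf{Main obstacle.} I expect the hardest steps to be the $L_\infty$ normalization of the diagonal kernel in the lower bound, where one must verify the $\bW^\br_\bq$ membership for vector $\bq$, and the $b>2$ upper bound, where the $m^{-1/2}$ gain must survive the passage to $L_b$.
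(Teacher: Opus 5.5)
The paper gives no proof of this theorem; it is quoted from \cite{VT47}. I therefore judge your plan on its own merits. The upper bounds are fine or easily fixed, but the lower bound has a genuine gap.

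Your upper bound for $b\in(1,2]$ is correct. You have $\bW^K_a\subset\bW^K_2$ and $\|\cdot\|_b\le\|\cdot\|_2$, and (\ref{KB7}) with (\ref{KolWg}) (or Corollary \ref{KBC1} and Remark \ref{KBR1}) gives the bound for $r_i>0$.

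For $b\in(2,\infty)$ the dyadic-block/Nikol'skii plan is only a sketch and is not needed. Commit to the factorization you mention at the end; it is exactly the paper's proof of Theorem \ref{KBT1} with $L_\infty$ replaced by $L_b$. Fix $1/2<u<r_1$ and let $g:=D^\bu_\bx K\in\bW^{(r_1-u,r_2)}_2$, so that $J_K=J_{F_\bu}J_g$. Then (\ref{KB6}) gives
\[
d_{2m}(\bW^K_a,L_b)\le d_m(\bW^\bu_2,L_b)\,s_{m+1}(J_g)\ll \bigl(m^{-1}(\log m)^{d-1}\bigr)^{u}\, m^{-(r_1-u)-r_2-1/2}(\log m)^{(d-1)(r_1-u+r_2)} .
\]
Here you use the classical bound $d_m(\bW^\bu_2,L_b)\ll(m^{-1}(\log m)^{d-1})^u$ for $2<b<\infty$, $u>1/2$. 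For $b<\infty$ there is no extra $(\log m)^{1/2}$ as in (\ref{KolWb}). This is where $r_1>1/2$ is used.

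\textbf{The gap: the lower bound for $\bq\ne\mathbf 2$.} This is the step you flagged yourself. The normalization $c\asymp 2^{-n(r_1+r_2)}|Q_n|^{-1/2}$ is the $L_2$ normalization, so it is valid only for $\bq=\mathbf 2$.

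\emph{Why the Dirichlet kernel fails.} Your kernel is of convolution type, $K(\bx,\by)=c\,\cD_{Q_n}(\bx-\by)$. So $D^\br K$ is $c$ times a fractional derivative of the hyperbolic-cross Dirichlet kernel, evaluated at $\bx-\by$. For $r_1=r_2$ its value on the diagonal $\bx=\by$ is
\[
c\sum_{\mathbf k\in Q_n}\prod_j\max(|k_j|,1)^{r_1+r_2}\asymp c\,2^{n(r_1+r_2)}|Q_n| .
\]
Hence for $\bq=(\infty,\dots,\infty)$ you are forced to take $c\asymp 2^{-n(r_1+r_2)}|Q_n|^{-1}$.

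No width estimate can compensate for this. $J_K$ is $c$ times the orthogonal projection onto $\cT(Q_n)$, so for $b\le 2$,
\[
d_m(\bW^K_a)_b\le s_{m+1}(J_K)=c\asymp(m^{-1}(\log m)^{d-1})^{r_1+r_2}m^{-1},
\]
which is a factor $m^{-1/2}$ below the claim.

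Since $\bW^\br_\bq\subset\bW^\br_{\mathbf 2}$, a lower bound for $\bq=\mathbf 2$ says nothing about larger $\bq$. The case $\bq=\infty$ is exactly the one that must be handled.

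\emph{What is needed instead.} You need an extremal kernel that is ``flat'':
\begin{enumerate}
\item $\|D^\br K\|_\bq$ must be comparable to $\|D^\br K\|_2$;
\item $J_K$ must still have $\gg m$ singular values of size $(m^{-1}(\log m)^{d-1})^{r_1+r_2}m^{-1/2}$.
\end{enumerate}
Candidates are Rudin--Shapiro-type coefficients or Gauss-sum/DFT-type couplings of the $\bx$- and $\by$-frequencies. They must be arranged so that no logarithmic factors are lost when summing over the dyadic blocks of $Q_n$. The Dirichlet kernel is the opposite extreme.

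For such a kernel, $J_K$ is no longer a projection onto $\cT(Q_n)$. You then also need a lower bound for the $L_b$-widths of the image of the $L_a$-ball under that operator. This is where the hypotheses $a<\infty$ and $b>1$ should enter, and reducing to $a=\infty$ only makes that step harder.
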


\section{Some new results on the Kolmogorov widths}
\label{NK}

Theorems \ref{VT183T1} and \ref{KPUU2} show that in the study of linear recovery the Kolmogorov widths in the uniform norm $L_\infty$ play an important role. In this section we focus on the case of the uniform norm and complement the  results  known in the case of $L_p$, $p\in [2,\infty)$, by the case $p=\infty$. The following Theorem \ref{KBT1} is a step in that direction from the above Theorem \ref{BiT5d}.

 \begin{Theorem}\label{KBT1} Let $d\in\bbN$. 
Assume that  we have $r_1>1/2$, $r_2>0$. Then
 $$
\sup_{K\in \bW^\br_2}  d_m(\bW^K_2)_\infty \ll m^{-r_1-r_2-1/2}(\log m)^{(d-1)(r_1+r_2)+1/2} .
 $$
 \end{Theorem}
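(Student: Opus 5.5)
The plan is to factor the integral operator $J_K$ so that the $x$-smoothness and the $y$-smoothness of $K$ are used separately, and then to apply the multiplicativity inequality (\ref{KB6}). Let $K\in\bW^\br_2$, so $K=F_{(\br^1,\br^2)}\ast\ff$ with $\|\ff\|_2\le 1$ on $\bbT^{2d}$. Since the Bernoulli kernel factors, we can write
$$
K(\bx,\by)=(2\pi)^{-d}\int_{\bbT^d}F_{\br^1}(\bx-\bz)G(\bz,\by)\,d\bz,\qquad G:=F_{(\mathbf 0,\br^2)}\ast\ff ,
$$
where the convolution with $F_{(\mathbf 0,\br^2)}$ acts in $\by$ only. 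Then $J_K=I_{r_1}\circ J_G$, with $I_{r_1}g:=F_{\br^1}\ast g$ on $\bbT^d$. Hence $\bW^K_2=I_{r_1}(J_G(B_{L_2}))$, and by (\ref{KB6}) with $X=Y=L_2(\bbT^d)$, $Z=L_\infty(\bbT^d)$,
$$
d_{2n}(\bW^K_2,L_\infty)\le d_n(J_G(B_{L_2}),L_2)\, d_n(\bW^{\br^1}_2,L_\infty).
$$

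For the first factor we use (\ref{KB7}): $d_n(J_G(B_{L_2}),L_2)=s_{n+1}(J_G)$. We then use (\ref{KB9}): $s_{2n}(J_G)\le n^{-1/2}\tau_n(G)_2$. To bound $\tau_n(G)_2$, approximate $G$ in the $\by$ variable by its projection onto a hyperbolic cross $Q_N$ in $\by$. If $\{e_\bk\}_{\bk\in Q_N}$ is the trigonometric system, then $\sum_{\bk\in Q_N}\hat G(\bx,\bk)e_\bk(\by)$ is an $|Q_N|$-term bilinear approximant. Its $L_2(\bbT^{2d})$ error is at most $\sup_{\bk\notin Q_N}\prod_j \max(1,|k_j|)^{-r_2}\,\|\ff\|_2$. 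With $|Q_N|\asymp n$ this gives
$$
\tau_n(G)_2\ll n^{-r_2}(\log n)^{(d-1)r_2},\qquad\text{so}\qquad s_{2n}(J_G)\ll n^{-r_2-1/2}(\log n)^{(d-1)r_2}.
$$
This step needs only $r_2>0$, which is consistent with Remark \ref{KBR1}.

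For the second factor we need the classical bound on the Kolmogorov widths of the mixed Sobolev class in the uniform norm: for $r_1>1/2$,
$$
d_n(\bW^{\br^1}_2,L_\infty)\ll n^{-r_1}(\log n)^{(d-1)r_1+1/2}.
$$
I would take this from the literature (see \cite{VTbookMA}, \cite{DTU}), where it is proved by a dyadic-block decomposition combined with Kashin--Gluskin type bounds for widths of finite-dimensional balls in $\ell_\infty$. This is the only place where $r_1>1/2$ is used, and it is the only source of the extra factor $(\log n)^{1/2}$. I expect this to be the main obstacle if one wants a self-contained argument. Direct linear approximation by hyperbolic-cross partial sums only gives $n^{-r_1+1/2}$, so genuinely nonlinear, Kashin-type subspaces are needed in each dyadic block. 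Within the present paper it is cleanest to invoke the known result.

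Multiplying the two bounds gives
$$
d_{4n}(\bW^K_2,L_\infty)\ll n^{-r_1-r_2-1/2}(\log n)^{(d-1)(r_1+r_2)+1/2},
$$
uniformly in $K\in\bW^\br_2$, since all constants depend only on $\|\ff\|_2\le1$, $\br$ and $d$. Passing from $4n$ to general $m$ by monotonicity of $d_m$ completes the proof.
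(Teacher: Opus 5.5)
Your proof is correct. It has the same skeleton as the paper's proof: factor $J_K$, apply (\ref{KB6}), use Belinsky's bound (\ref{KolWb}) for the $L_\infty$ factor and a singular-number bound for the $L_2$ factor. The difference is how you split the $\bx$-smoothness.

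The paper fixes $1/2<u<r_1$ and puts only $I^{\bu}_\bx$ into the $L_\infty$ factor. It keeps $g=I^{\ba}_\bx I^{\br^2}_\by\ff\in\bW^{(\ba,\br^2)}_2$, with $\ba=(r_1-u,\dots,r_1-u)$. It then takes the singular numbers of $J_g$ from the cited estimate (\ref{KolWg}) of \cite{VT32}, which covers kernels with mixed smoothness in both groups of variables and is stated for $a_1>0$.

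You in effect take $u=r_1$. Your $G$ is then smooth only in $\by$, and its singular numbers follow from a short hyperbolic-cross projection in $\by$ plus (\ref{KB8})--(\ref{KB9}). This is exactly the degenerate case $a_1=0$ of (\ref{KolWg}). Both ingredient bounds are additive in the smoothness and carry the same logarithmic coefficient $d-1$, so the product is identical for every admissible split. Your version is therefore more self-contained and needs no auxiliary parameter.

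What the paper's parametrized form (recorded as Lemma \ref{KBL2}) buys is reusability. For $K\in\bH^{r_1,r_2}_2$ in Theorem \ref{KBT2}, the choice $u=r_1$ breaks down. There $\|A_\bs(D^{\br^1}_\bx K)\|_2$ is only bounded by $2^{-r_2\|\bs^2\|_1}$, with no decay in $\bs^1$, so $D^{\br^1}_\bx K$ need not lie in $L_2$. In that case one must leave a positive amount $r_1-u$ of $\bx$-smoothness in $g$, as Lemma \ref{KBL3} does.
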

 \begin{proof} We remind some known results that we use in the proof. E. Belinsky (see  \cite{TrBe}) proved the following bounds
 \be\label{KolWb}
d_m(\bW^r_{2},L_\infty) \ll m^{-r}(\log m)^{(d-1)r+1/2}, \qquad r>1/2. 
\ee
The following bound was obtained in \cite{VT32} (see Theorem 3.1 there): For $g\in \bW^{a_1,a_2}_2$, $a_1>0$, $a_2>0 $ we have
\be\label{KolWg}
d_m(\bW^g_{2},L_2) = s_{m+1}(J_g) \ll m^{-a_1-a_2-1/2}(\log m)^{(d-1)(a_1+a_2)}  . 
\ee
We also need the following operators of fractional integration and differentiation. We begin with the univariate case.
In this subsection we discuss a slightly more general Bernoulli kernels and integral operators related to them (see \cite{VTbookMA}, Section 1.4). In the univariate case, for $a>0$, and $\al \in \bbR$ let
 $$
 F_{a,\al}(x):= 1+2\sum_{k=1}^\infty k^{-a}\cos (kx-\al\pi/2)
 $$
\be\label{Lb1}
  = 1+\sum_{k=1}^\infty k^{-a}(e^{i\al\pi/2}e^{-ikx}+e^{-i\al\pi/2}e^{ikx})
\ee
be the generalised Bernoulli kernel. Clearly, we have $F_{a}(x) = F_{a,a}(x)$, where $F_a(x)$ is defined in (\ref{Bi8}). Define the integral operator, acting on trigonometric polynomials $\phi(x)$, as
\be\label{Lb2}
(I^{(a,\al)}\phi)(x) := (I^{(a,\al)}_x\phi)(x) := ( F_{a,\al} \ast \phi)(x):= \frac{1}{2\pi}\int_{\bbT} F_{a,\al}(x-z) \phi(z)dz.
\ee
The operator $I^{(a,\al)}$ is the multiplier operator:
\be\label{Lb3}
(I^{(a,\al)}\phi)(x) = \hat \phi(0)+ \sum_{k<0}  |k|^{-a}e^{i\al\pi/2} \hat{\phi}(k)e^{ikx}+ \sum_{k>0}k^{-a}e^{-i\al\pi/2} \hat{\phi}(k)e^{ikx}.
\ee
Identity (\ref{Lb3}) implies that 
\be\label{Lb3'}
I^{(a,\al)}I^{(b,\bt)} = I^{(b,\bt)} I^{(a,\al)} = I^{(a+b,\al+\bt)} .
\ee

We now define the inverse operator to the operator $I^{(a,\al)}$, acting on the trigonometric polynomials from $\cT(2n)$ (we take $2n$ for convenience in the future use). Define
$$
\cD^{(a,\al)}_{2n}(x) := 1+2\sum_{k=1}^{2n} k^{a}\cos (kx+\al\pi/2)
$$
and the operator (for $h \in \cT(2n)$) 
\be\label{Lb4}
(D^{(a,\al)}h)(x) := (D^{(a,\al)}_xh)(x) := (\cD^{a,\al}_{2n} \ast h)(x):= \frac{1}{2\pi}\int_{\bbT} \cD^{a,\al}_{2n}(x-z) h(z)dz.
\ee
The operator $D^{(a,\al)}$ is the multiplier operator:
\be\label{Lb3a}
(D^{(a,\al)}h)(x) = \hat h(0)+ \sum_{k<0}  |k|^{a}e^{-i\al\pi/2} \hat{h}(k)e^{ikx}+ \sum_{k>0}k^{a}e^{i\al\pi/2} \hat{h}(k)e^{ikx}.
\ee
It is easy to see that for $h \in \cT(2n)$ we have
\be\label{Lb5}
I^{(a,\al)}D^{(a,\al)}h   = h. 
\ee
Clearly, the operator $D^{(a,\al)}$ can be defined for smooth enough functions $h$ instead of the trigonometric polynomials. 
Then relation (\ref{Lb5}) means that $I^{(a,\al)}D^{(a,\al)} =Id$, where $Id$ is the identity operator. 

For convenience, we write
$$
I^a_x := I^{(a,a)}_x,\qquad D^a_x := D^{(a,a)}_x.
$$
For  vectors $\br = (r_1,\dots,r_d)$ and $\bx = (x_1,\dots,x_d)$ define 
$$
I^\br_\bx := \prod_{j=1}^d I^{r_j}_{x_j},\qquad D^\br_\bx := \prod_{j=1}^d D^{r_j}_{x_j}. 
$$

Assume that $K\in \bW^\br_2 = \bW^{(\br^1,\br^2)}_2$ (see below) with $r_1>1/2$, $r_2>0$. Let $u$ be a number satisfying $1/2 <u< r_1$. 
This means that there exists $\phi \in L_{2}(\bbT^{2d})$, $\|\phi\|_2 \le 1$, such that
$$
K = I^{\br^1}_\bx I^{\br^2}_\by \phi,\qquad \br^1 =(r_1,\dots,r_1) \in \bbR^d,\quad  \br^2 =(r_2,\dots,r_2) \in \bbR^d.
$$
Represent 
$$
I^{\br^1}_\bx = I^{\bu}_\bx I^{\ba}_\bx,\quad \bu := (u,\dots,u) \in \bbR^d,\quad \ba := (r_1-u,\dots,r_1-u) \in \bbR^d. 
$$ 
Denote $g:= I^{\ba}_\bx I^{\br^2}_\by \phi$. Then $g\in \bW^{(\ba,\br^2)}_2$ and $J_K = J_{F_\bu}J_g$. Therefore,
\be\label{Lb6}
d_{2m}(\bW^K_2,L_\infty) \le d_m(\bW^\bu_2,L_\infty) d_{m}(\bW^g_2,L_2).
\ee
 We now use relations (\ref{KolWb}), (\ref{KolWg}) and complete the proof. 

 \end{proof}
 
 For the future use we formulate the inequality (\ref{Lb6}) proved above as a separate statement.
 
 \begin{Lemma}\label{KBL2} Let $d\in \bbN$ and $\bu \in \bbR^d_+$, $\bu := (u,\dots,u)$, $u>1/2$. Assume that $K$ is such that $g := D^\bu_\bx K \in L_2(\Omega^1 \times \Omega^2)$. Then 
 \be\label{Lb7}
d_{2m}(\bW^K_2,L_\infty) \le d_m(\bW^\bu_2,L_\infty) d_{m}(\bW^g_2,L_2).
\ee
 \end{Lemma}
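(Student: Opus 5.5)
The plan is to factor the integral operator $J_K$ through $J_{F_\bu}$, as in the proof of Theorem \ref{KBT1}, and then apply the submultiplicativity trick (\ref{KB6}).

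\textbf{Step 1: factorization.} Since $g = D^\bu_\bx K$ and $I^\bu_\bx D^\bu_\bx = Id$ by (\ref{Lb5}), we have $K = I^\bu_\bx g = F_\bu \ast_\bx g$, the convolution being taken in $\bx$ only. For $\ff\in L_2(\Omega^2)$, Fubini gives
$$
(J_K\ff)(\bx) = \int_{\Omega^2} (F_\bu\ast_\bx g)(\bx,\by)\ff(\by)\,d\mu_2 = \bigl(F_\bu \ast (J_g\ff)\bigr)(\bx).
$$
Hence $J_K = J_{F_\bu} J_g$, where $J_{F_\bu} h := F_\bu\ast h$ maps the unit ball of $L_2(\bbT^d)$ onto $\bW^\bu_2$. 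The operator $J_g$ maps $L_2(\Omega^2)$ into $L_2(\Omega^1)$ boundedly, since $g\in L_2$ makes it Hilbert--Schmidt.

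\textbf{Step 2: apply (\ref{KB6}).} Take $X = L_2(\Omega^2)$, $Y = L_2(\Omega^1)$, $Z = L_\infty$, $P = J_g$ and $Q = J_{F_\bu}$. Then $P(B_X) = \bW^g_2$ and $Q(B_Y) = \bW^\bu_2$. The condition $u>1/2$ guarantees that $F_\bu\in L_2$, so $Q$ maps $L_2$ boundedly into continuous functions. Inequality (\ref{KB6}) then gives exactly (\ref{Lb7}).

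The main thing to check is the justification of (\ref{KB6}) itself. The argument runs as follows. Choose a subspace $Y_n\subset Y$ of dimension $n$ that nearly achieves $d_n(P(B_X),Y)$. For each $x\in B_X$, let $y_x\in Y_n$ be a near-best approximant, so that $\|Px-y_x\|_Y\le d_n(P(B_X),Y)+\e$. Then $QPx - Qy_x = Q(Px-y_x)$, and this element lies in $(d_n(P(B_X),Y)+\e)\,Q(B_Y)$. Next, choose an $n$-dimensional $Z_n$ that nearly achieves $d_n(Q(B_Y),Z)$. The space $Q(Y_n)+Z_n$ has dimension at most $2n$, and it approximates $QPx$ within $(d_n(P(B_X),Y)+\e)(d_n(Q(B_Y),Z)+\e)$. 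Letting $\e\to0$ completes the argument.

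A further technical point is the measurability and Fubini interchange in Step 1. It is harmless, because $g\in L_2$ and $F_\bu\in L_2$, so all the integrals converge absolutely.
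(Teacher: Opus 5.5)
Your proposal is correct and is essentially the paper's argument: the paper obtains (\ref{Lb7}) in the proof of Theorem \ref{KBT1} by writing $J_K = J_{F_\bu}J_g$ and then invoking the submultiplicativity inequality (\ref{KB6}). The paper states (\ref{KB6}) without proof as a ``simple inequality''. Your verification of it through the space $Q(Y_n)+Z_n$ of dimension at most $2n$, and your absolute-convergence check for Fubini using $F_\bu\in L_2$ (which is where $u>1/2$ enters), supply details the paper leaves implicit.
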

 
 We now prove an analog of Theorem \ref{KBT1} for the $\bH$ classes.
 
  \begin{Theorem}\label{KBT2} Let $d\in\bbN$. 
Assume that  we have $r_1>1/2$, $r_2>0$. Then
 $$
\sup_{K\in \bH^{r_1,r_2}_2}  d_m(\bW^K_2)_\infty \ll m^{-r_1-r_2-1/2}(\log m)^{(d-1)(r_1+r_2)+d-1/2} .
 $$
 \end{Theorem}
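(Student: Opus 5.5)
The argument will follow the proof of Theorem \ref{KBT1}, using Lemma \ref{KBL2} to factor the integral operator. Let $K\in \bH^{r_1,r_2}_2$ and fix $u$ with $1/2<u<r_1$. Put $\bu=(u,\dots,u)\in\bbR^d$, $\ba=(r_1-u,\dots,r_1-u)$, and $g:=D^\bu_\bx K$.

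\textbf{Step 1: $g$ lies in an $\bH$ class.} I will show that $g\in \bH^{(\ba,\br^2)}_2 B$ with $B$ depending only on $r_1,r_2,u,d$. Theorem \ref{H} reduces this to the dyadic blocks. Since $D^\bu_\bx$ is a Fourier multiplier, it commutes with $A_\bs$. On the frequency support of $\cA_\bs$, the multiplier $D^\bu_\bx$ acts with size $\asymp 2^{u(s_1+\dots+s_d)}$ in the $\bx$-variables. In $L_2$ its norm there is at most $C2^{u(s_1+\dots+s_d)}$, by Parseval, because the multiplier $\prod|k_j|^{u}e^{\pm i u\pi/2}$ is bounded by this quantity on the support. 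Hence
$$
\|A_\bs(g)\|_2\le C2^{u(s_1+\dots+s_d)}\|A_\bs(K)\|_2\le C 2^{-(\ba,\bs^1)-(\br^2,\bs^2)},
$$
where $\bs=(\bs^1,\bs^2)$. The converse part of Theorem \ref{H} then gives the claimed membership. One detail needs care: $D^\bu_\bx$ on non-polynomials is understood as $\sum_\bs D^\bu_\bx A_\bs K$. This series converges in $L_2$ because $r_1-u>0$, so $g\in L_2$ and Lemma \ref{KBL2} applies with $J_K=J_{F_\bu}J_g$.

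\textbf{Step 2: apply Lemma \ref{KBL2}.} This gives
$$
d_{2m}(\bW^K_2,L_\infty)\le d_m(\bW^\bu_2,L_\infty)\,d_m(\bW^g_2,L_2).
$$
Belinsky's bound (\ref{KolWb}) gives $d_m(\bW^\bu_2,L_\infty)\ll m^{-u}(\log m)^{(d-1)u+1/2}$. By (\ref{KB7}), $d_m(\bW^g_2,L_2)=s_{m+1}(J_g)$. Corollary \ref{KBC2}, applied with smoothness $(r_1-u,r_2)$, both positive, and $\bq=2$, gives
$$
s_{m+1}(J_g)\ll m^{-(r_1-u)-r_2-1/2}(\log m)^{(r_1-u+r_2+1)(d-1)}.
$$
Multiplying the two bounds, the powers of $m$ combine to $m^{-r_1-r_2-1/2}$. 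The logarithmic exponent is
$$
(d-1)u+\tfrac12+(d-1)(r_1-u+r_2)+(d-1)=(d-1)(r_1+r_2)+d-\tfrac12,
$$
which is exactly the exponent claimed in the statement. Passing from $2m$ to $m$ only changes constants.

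\textbf{Main obstacle.} The hard part is Step 1, that is, checking that Corollary \ref{KBC2} applies to $g$ uniformly over the class. Two things must be verified. First, applying $D^\bu_\bx$ moves $K$ from $\bH^{r_1,r_2}$ into $\bH^{(r_1-u,r_2)}$ with a uniform constant $B$. Second, Corollary \ref{KBC2}, which is stated for equal smoothness within each group of variables, holds for the class $B\,\bH^{(\ba,\br^2)}_2$; here $\ba$ has all coordinates equal, so the corollary applies after scaling by $B$. The blockwise multiplier estimate in $L_2$ is where the argument could fail if done carelessly. Because we are in $L_2$, Parseval makes it immediate and no Marcinkiewicz-type theorem is needed.
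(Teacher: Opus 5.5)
Your proof is correct and follows the paper's argument essentially step for step. The paper also applies Lemma~\ref{KBL2} with $g=D^\bu_\bx K$, proves $g\in\bH^{r_1-u,r_2}_2$ (Lemma~\ref{KBL3}) by the same dyadic-block estimate $\|A_\bs(g)\|_2\ll 2^{u\|\bs^1\|_1}\|A_\bs(K)\|_2$ using both directions of Theorem~\ref{H}, and then multiplies Belinsky's bound (\ref{KolWb}) by the singular-number bound for $g$. Your citation of Corollary~\ref{KBC2} for that last factor is actually the apt one: the paper writes ``Corollary~\ref{KBC1} and Remark~\ref{KBR1}'', but the logarithmic exponent $(d-1)(r_1-u+r_2)+(d-1)$ it uses is that of the $\bH$-class result. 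Your remark on the $L_2$-convergence of $\sum_\bs D^\bu_\bx A_\bs K$ also fills in a detail the paper leaves implicit.
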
 
 \begin{proof} We use notations from the above proof of Theorem \ref{KBT1}. By Lemma \ref{KBL2} we have 
  \be\label{Lb8}
d_{2m}(\bW^K_2,L_\infty) \le d_m(\bW^\bu_2,L_\infty) d_{m}(\bW^g_2,L_2), \quad g:= D^\bu_\bx K.
\ee

We now need one more lemma. 

\begin{Lemma}\label{KBL3} Assume that  we have $r_1>1/2$, $r_2>0$. Then for $K \in \bH^{r_1,r_2}_2$ and 
 $\bu := (u,\dots,u)$, $1/2<u< r_1$ we have $g := D^\bu_\bx K \in \bH^{r_1-u,r_2}_2$.
 \end{Lemma}
 \begin{proof} By Theorem \ref{H} we get for all $\bs \in \bbN_0^{2d}$
 $$
 \|A_\bs(K)\|_2 \ll 2^{-r_1\|\bs^1\|_1-r_2\|\bs^2\|_1}, \quad \bs^1:= (s_1,\dots,s_d), \quad   \bs^2:= (s_{d+1},\dots,s_{2d}).
 $$
 From here we easily obtain  
 $$
 \|A_\bs(g)\|_2 = \|A_\bs(D^\bu_\bx K)\|_2 \ll 2^{u\|\bs^1\|_1} \|A_\bs(K)\|_2 \ll 2^{-(r_1-u)\|\bs^1\|_1-r_2\|\bs^2\|_1}.
 $$
 By Theorem \ref{H} we conclude $g := D^\bu_\bx K \in \bH^{r_1-u,r_2}_2$, which proves Lemma \ref{KBL3}.
 \end{proof} 
 
 We continue proof of Theorem \ref{KBT2}. By (\ref{KolWb}) we get
 \be\label{KB9a}
 d_m(\bW^\bu_2,L_\infty) \ll m^{-u} (\log m)^{(d-1)u +1/2}.
 \ee
 By Corollary \ref{KBC1} and Remark \ref{KBR1} we have for $ g \in  \bH^{r_1-u,r_2}_2$
 \be\label{KB10a}
 d_{m}(\bW^g_2,L_2) \ll m^{-(r_1-u)-r_2-1/2} (\log m)^{(d-1)(r_1-u+r_2) +(d-1)}.
 \ee
 Combining (\ref{Lb8}) -- (\ref{KB10a}), we complete the proof of Theorem \ref{KBT2}.
 \end{proof}
 
 \section{Some relations between different asymptotic characteristics}
 \label{RI}
 
  \subsection{Connections between linear recovery and nonlinear approximations}
\label{RN}

{\bf Sampling recovery.} We begin with a simple inequality for the linear recovery $\ro_m(\bW^K_q,L_p)$. 
 
 \begin{Proposition}\label{RNP1} Let $1\le q,p, \le \infty$. Assume that for every $\bz\in \Omega^1$ we have $K(\bz,\cdot) \in L_{q'}(\Omega^2)$, $q':=q/(q-1)$. Then we have
 \be\label{RN1}
 \ro_m(\bW^K_q,L_p) \le \sigma_m(K,\cL\cK)_{p,q'}.
 \ee
 \end{Proposition}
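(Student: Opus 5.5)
The plan is to mimic the proof of Lemma \ref{KBL1}, with the bilinear dictionary replaced by $\cL\cK$. The recovery operator will be read off directly from a near-optimal $m$-term approximant. Fix $\e>0$ and choose an element of $\Sigma_m(\cL\cK)$ within $\e$ of optimal. By definition of the $\cL\cK$-system, such an element has the form
$$
G(\bx,\by)=\sum_{j=1}^m c_j u_j(\bx,\bz^j)K(\bz^j,\by),\qquad \bz^j\in\Omega^1 ,
$$
with $u_j(\cdot,\bz^j)\in L_p(\Omega^1)$ and $K(\bz^j,\cdot)\in L_{q'}(\Omega^2)$ by assumption. We absorb $c_j$ into $u_j$, write $U_j(\bx):=c_ju_j(\bx,\bz^j)$, and require
$$
\|K-G\|_{p,q'}\le \sigma_m(K,\cL\cK)_{p,q'}+\e .
$$

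Take the sample points to be $\xi^j:=\bz^j$, $j=1,\dots,m$. Define the linear map $\Phi:\bbC^m\to L_p(\Omega^1)$ by $\Phi(t_1,\dots,t_m):=\sum_{j=1}^m t_jU_j$. Now let $f\in\bW^K_q$, so $f(\bx)=\int_{\Omega^2}K(\bx,\by)\ff(\by)\,d\mu_2$ with $\|\ff\|_q\le1$. Then
$$
f(\bz^j)=\int_{\Omega^2}K(\bz^j,\by)\ff(\by)\,d\mu_2 ,
$$
which is well defined by H\"older's inequality, since $K(\bz^j,\cdot)\in L_{q'}$. It follows that
$$
f-\Phi(f(\xi^1),\dots,f(\xi^m))=\int_{\Omega^2}\bigl(K(\cdot,\by)-G(\cdot,\by)\bigr)\ff(\by)\,d\mu_2 .
$$
This is exactly the identity (\ref{KB4}), with $G$ in place of the bilinear sum.

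Next apply Minkowski's integral inequality followed by H\"older's inequality in $\by$, as in the proof of Lemma \ref{KBL1}. This gives
$$
\|f-\Phi(S(f,\xi))\|_p\le \int_{\Omega^2}\|K(\cdot,\by)-G(\cdot,\by)\|_p|\ff(\by)|\,d\mu_2\le \|K-G\|_{p,q'}\|\ff\|_q .
$$
Taking the supremum over $f\in\bW^K_q$ and then letting $\e\to0$ yields (\ref{RN1}).

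The only delicate point is the point evaluation $f(\bz^j)$: a priori $f$ is an element of $L_p$, so its values at points are not automatically meaningful. We interpret $f$ through its defining integral, which is defined at every point precisely because of the hypothesis $K(\bz,\cdot)\in L_{q'}(\Omega^2)$; this is the only role that hypothesis plays. A secondary point is that the Minkowski step needs measurability of $K-G$, which we take as given since $K\in L_{p,q'}$.
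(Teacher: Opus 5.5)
Your proof is correct and is essentially the paper's argument. The paper bounds the error of an arbitrary linear recovery operator $\sum_{j=1}^m f(\xi^j)\psi_j$ by $\bigl\|K(\bx,\by)-\sum_{j=1}^m K(\xi^j,\by)\psi_j(\bx)\bigr\|_{p,q'}$ via Minkowski and H\"older, then takes the infimum over the points and functions. You instead first fix a near-optimal $\cL\cK$ approximant and read off $\xi^j=\bz^j$ and $\psi_j=U_j$ from it, which is the same reasoning with the order of the infimum and the estimate reversed.
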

\begin{proof} Consider an operator $\Psi_m$ of linear recovery
$$
\Psi_m(f,\xi,\bx) := \sum_{j=1}^m f(\xi^j)\psi_j(\bx).
$$
Then we have for $f\in \bW^K_q$
$$
\|f-\Psi_m(f,\xi,\bx)\|_p \le \int_{\Omega^2} \left\|K(\cdot,\by)-\sum_{j=1}^m K(\xi^j,\by)\psi_j(\cdot)\right\|_p |\ff(\by)|d\mu_2.
$$
This implies that 
$$
\sup_{f\in\bW^K_q} \|f-\Psi_m(f,\xi,\bx)\|_p \le  \left\|K(\bx,\by)-\sum_{j=1}^m K(\xi^j,\by)\psi_j(\bx)\right\|_{p,q'}  .
$$
We now take infimum over sets of points $\{\xi^j\}_{j=1}^m$ and sets of functions $\{\psi_j\}_{j=1}^m$ and complete the proof. 

\end{proof}

For the next simple relation we need a new notation. Define for $p_1,p_2$
$$
\|f(\bx,\by)\|_{L^*_{p_1,p_2}}:=\|f(\bx,\by)\|^*_{p_1,p_2} := \|\|f(\bx,\cdot)\|_{p_2}\|_{p_1},
$$
which means that first we take the norm with respect to $\by$ and after that the norm with respect to $\bx$. 

 \begin{Proposition}\label{RNP2} Let $1\le q \le \infty$. Assume that for every $\bz\in \Omega^1$ we have $K(\bz,\cdot) \in L_{q'}(\Omega^2)$, $q':=q/(q-1)$. Then we have
 \be\label{RN2}
 \ro_m(\bW^K_q,L_\infty) = \sigma_m(K,\cL\cK)_{L^*_{\infty,q'}}.
 \ee
 \end{Proposition}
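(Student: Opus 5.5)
We prove the two inequalities separately. The upper bound $\ro_m(\bW^K_q,L_\infty) \le \sigma_m(K,\cL\cK)_{L^*_{\infty,q'}}$ repeats the proof of Proposition \ref{RNP1}, with the roles of the variables in the norm interchanged. The lower bound uses duality in the variable $\by$, for each fixed $\bx$.

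\emph{Upper bound.} Fix points $\xi=\{\xi^j\}_{j=1}^m\subset\Omega^1$ and functions $\psi_j$. Take $f\in\bW^K_q$, so $f(\bx)=\int_{\Omega^2}K(\bx,\by)\ff(\by)d\mu_2$ with $\|\ff\|_q\le 1$. Then $f(\xi^j)=\int K(\xi^j,\by)\ff(\by)d\mu_2$, and for every fixed $\bx$
$$
f(\bx)-\sum_{j=1}^m f(\xi^j)\psi_j(\bx)=\int_{\Omega^2}\Bigl(K(\bx,\by)-\sum_{j=1}^m K(\xi^j,\by)\psi_j(\bx)\Bigr)\ff(\by)d\mu_2 .
$$
By H\"older's inequality in $\by$, the absolute value of this quantity is at most $\|K(\bx,\cdot)-\sum_j K(\xi^j,\cdot)\psi_j(\bx)\|_{q'}$. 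Taking the supremum over $\bx$ bounds the error by the $L^*_{\infty,q'}$ norm of $K-\sum_j\psi_j(\bx)K(\xi^j,\by)$. This is the point where the order of norms matters: in Proposition \ref{RNP1} the Minkowski inequality forced the $L_{p,q'}$ norm, whereas here the pointwise H\"older estimate produces the $L^*_{\infty,q'}$ norm directly. Taking the infimum over $\xi$ and $\psi_j$ gives the upper bound, since every element of $\Sigma_m(\cL\cK)$ has the form $\sum_j u_j(\bx)K(\bz^j,\by)$.

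\emph{Lower bound.} Fix a linear recovery operator $\Phi$ with points $\xi^1,\dots,\xi^m$. Any linear map $\bbC^m\to L_\infty$ has the form $\Phi(a)=\sum_j a_j\psi_j$. Set $E(\bx,\by):=K(\bx,\by)-\sum_j\psi_j(\bx)K(\xi^j,\by)$; by the identity above, the recovery error for $f=J_K\ff$ at $\bx$ equals $\int E(\bx,\by)\ff(\by)d\mu_2$. Now fix $\bx$ and take the supremum over $\|\ff\|_q\le1$. By $L_q$–$L_{q'}$ duality (norming functionals), this supremum equals $\|E(\bx,\cdot)\|_{q'}$. Since $\ff$ may be chosen depending on $\bx$, we obtain
$$
\sup_{f\in\bW^K_q}\|f-\Phi(f(\xi))\|_\infty\ \ge\ \sup_{\bx}\sup_{\ff}\Bigl|\int E(\bx,\by)\ff(\by)d\mu_2\Bigr|=\sup_\bx\|E(\bx,\cdot)\|_{q'}=\|E\|_{L^*_{\infty,q'}}\ \ge\ \sigma_m(K,\cL\cK)_{L^*_{\infty,q'}}.
$$
Taking the infimum over $\xi$ and $\Phi$ gives the reverse inequality.

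\emph{Main obstacle.} The difficult part is justifying the duality step at the endpoints. When $q=\infty$ we have $q'=1$, and the dual pairing $L_\infty$ versus $L_1$ is fine since $\sup_{\|\ff\|_\infty\le1}\int E\ff = \|E\|_1$, with $\ff=\overline{\mathrm{sgn}\,E}$. When $q=1$ we have $q'=\infty$, and the supremum equals the essential supremum of $E(\bx,\cdot)$; this is attained only approximately, by approximate delta functions. Under the standing convention that $L_\infty$ means continuous functions, so that $K(\bx,\cdot)$ is continuous, this essential supremum coincides with the uniform norm. There is also a measurability issue: the supremum over $\bx$ in $L_\infty$ is the pointwise supremum for continuous $f$. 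I would handle this by assuming, as in the paper's setting, that $K$ and the $\psi_j$ are continuous, or else by interpreting the $\sup_\bx$ as an essential supremum consistently on both sides.
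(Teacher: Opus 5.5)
Your proof is correct and is essentially the paper's argument. The paper fixes $\xi$ and $\psi_j$, writes the error as $\sup_{\bx}\bigl|\int_{\Omega^2} E(\bx,\by)\ff(\by)\,d\mu_2\bigr|$, interchanges $\sup_f$ with $\sup_{\bx}$, and uses $L_q$--$L_{q'}$ duality to get an equality for each fixed recovery operator before taking the infimum; your split into H\"older for the upper bound and norming functionals for the lower bound is the same computation. One small caveat on your endpoint remark: the $\mu_2$-essential supremum of a continuous $E(\bx,\cdot)$ equals its uniform supremum only when $\mu_2$ has full support, a technicality the paper also leaves implicit.
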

\begin{proof} In the same way as in the above proof of Proposition \ref{RNP1} we obtain for $f\in \bW^K_q$
$$
\|f-\Psi_m(f,\xi,\bx)\|_\infty 
$$
$$
=\sup_{\bx\in\Omega^1}\left| \int_{\Omega^2} \left(K(\bx,\by)-\sum_{j=1}^m K(\xi^j,\by)\psi_j(\bx)\right) \ff(\by)d\mu_2 \right| =: \sup_{\bx\in\Omega^1}E(\bx,\ff).
$$
Therefore,
$$
\sup_{f\in\bW^K_q}\|f-\Psi_m(f,\xi,\bx)\|_\infty= \sup_{f\in\bW^K_q}\sup_{\bx\in\Omega^1}E(\bx,\ff) = \sup_{\bx\in\Omega^1} \sup_{f\in\bW^K_q}E(\bx,\ff) 
$$
$$
=\left\|K(\bx,\by)-\sum_{j=1}^m K(\xi^j,\by)\psi_j(\bx)\right\|^*_{\infty,q'}.
$$
We now take infimum over sets of points $\{\xi^j\}_{j=1}^m$ and sets of functions $\{\psi_j\}_{j=1}^m$ and complete the proof. 
\end{proof}

 \subsection{Some inequalities}
 \label{2In}
 
 In the Section \ref{In} we defined the systems $\Pi=\cL\cL$, $\cL\cK$, $\cK\cL$, and $\cK\cK$. In this section we only discuss the best $m$-term approximations with respect to some of these systems and therefore normalization of elements of these systems does not play any role. Obviously, we have the following inclusions for any $\bp$
 $$
 \cK\cK(\bp) \subset \cL\cK(\bp) \subset \Pi(\bp).
 $$
 These inclusions immediately imply the following trivial inequalities
 $$
 \sigma_m(K,\Pi(\bp))_\bp \le   \sigma_m(K,\cL\cK(\bp))_\bp  \le \sigma_m(K,\cK\cK(\bp))_\bp. 
 $$
 
 In this section we discuss the following fundamental problems.
 
 {\bf Problem $\cL\cK-\Pi$.} Find an upper bound for $\sigma_m(K,\cL\cK(\bp))_\bp$ in terms of  $\sigma_n(K,\Pi(\bp))_\bp$
 with $n$ close to $m$.
 
  {\bf General Problem $\cL\cK-\Pi$.} Find an upper bound for $\sigma_m(K,\cL\cK(\bp))_\bp$ in terms of  $\sigma_n(K^*,\Pi(\bp))_\bp$ with $n$ close to $m$, where $K^*$ is a new function build from $K$. Certainly, we would like the operator 
  mapping $K$ to $K^*$ to be as simple as possible. For instance, it might be a differentiation operator, which is popular in approximation theory. 
  
  We begin with a result on the Problem $\cL\cK-\Pi$. 
  
  \begin{Theorem}\label{2InT1} For any $b\in(1,2]$ there exists a positive constant $B=B(b)$ such that for any continuous on $\Omega^1\times\Omega^2$ function $K(\bx,\by)$   we have
\be\label{2In1}
  \sigma_m(K,\cL\cK(\infty))_\infty \le Bm^{1/2}  \sigma_{\theta (m-1)}(K,\Pi(\infty))_\infty
  \ee
  with $\theta =1/b$ in the real case and $\theta =1/(2b)$ in the complex case. 
  \end{Theorem}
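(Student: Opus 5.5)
The plan is to combine Proposition \ref{RNP2} with $q=1$ (so $q'=\infty$), Lemma \ref{KBL1} and Theorem \ref{krT1}. For $q'=\infty$ the norms $\|\cdot\|^*_{\infty,\infty}$ and $\|\cdot\|_{\infty,\infty}$ both coincide with the uniform norm on $\Omega^1\times\Omega^2$, because $K$ is continuous. Proposition \ref{RNP2} therefore gives
$$
\ro_m(\bW^K_1,L_\infty) = \sigma_m(K,\cL\cK(\infty))_\infty .
$$
Here $\Omega=\Omega^1$ is compact and $\bW^K_1$ should be read as the relevant compact subset of $\cC(\Omega^1)$, namely the closure of $\{\int K(\cdot,\by)\ff(\by)d\mu_2\}$. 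Lemma \ref{KBL1} with $p=\infty$ gives
$$
d_n(\bW^K_1,L_\infty)=\tau_n(K)_{\infty,\infty}=\sigma_n(K,\Pi(\infty))_\infty .
$$

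Next, apply Theorem \ref{krT1} to $\bF=\bW^K_1$ in the real case. It gives $\ro_{\lceil b(n+1)\rceil}(\bW^K_1,L_\infty)\le Bn^{1/2}d_n(\bW^K_1,L_\infty)$. Given $m$, choose $n$ as large as possible subject to $\lceil b(n+1)\rceil\le m$. Since $\ro_k$ is nonincreasing in $k$ (extra sample points can simply be given zero coefficient functions), we get
$$
\sigma_m(K,\cL\cK(\infty))_\infty\le \ro_{\lceil b(n+1)\rceil}(\bW^K_1,L_\infty)\le Bn^{1/2}\sigma_n(K,\Pi(\infty))_\infty\le Bm^{1/2}\sigma_n(K,\Pi(\infty))_\infty .
$$
It remains to compare $n$ with $\theta(m-1)=(m-1)/b$. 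The requirement $\lceil b(n+1)\rceil\le m$ is equivalent to $b(n+1)\le m$, i.e.\ $n\le m/b-1$, so the largest admissible $n$ is about $m/b-1$, slightly below $(m-1)/b$. This is the index bookkeeping to fix. I would either absorb the discrepancy by replacing $b$ with a slightly smaller $b'\in(1,b)$ and adjusting $B=B(b)$, treating small $m$ separately with the trivial bound, or read $\sigma_{\theta(m-1)}$ as $\sigma_{\lfloor\theta(m-1)\rfloor}$ and check whether the recovery bound actually yields $\lceil bn\rceil+1$ points. The latter seems natural: Remark \ref{BR1} adds one point for the constant function, so $m-1\ge bn$ suffices, which gives exactly $n=\lfloor (m-1)/b\rfloor$. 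The complex case is identical, with $2n+1$ in place of $n+1$, giving $\theta=1/(2b)$.

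The main obstacle is justifying the application of Theorem \ref{krT1}, which concerns a compact class in $\cC(\Omega^1)$ with the uniform norm. We need $\bW^K_1$ to be precompact in $\cC(\Omega^1)$; this holds since $K$ is uniformly continuous and $\|\ff\|_1\le1$, by Arzel\`a--Ascoli, and passing to the closure changes neither $d_n$ nor $\ro_m$. We also need the identity in Proposition \ref{RNP2} to hold with $q=1$, where the supremum over $\|\ff\|_1\le 1$ of the pairing gives the sup over $\by$. That step uses continuity in $\by$ and a measure $\mu_2$ with full support, or else interpreting the $L_\infty$ norm in $\by$ appropriately. Alternatively, the inequality $\le$ of Proposition \ref{RNP2} can be bypassed: recover via Theorem \ref{krT1} and then evaluate the recovery operator against point masses $\ff\to\delta_{\by}$ by approximation, which directly exhibits an $m$-term $\cL\cK$ approximant of $K$.
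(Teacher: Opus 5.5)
Your proposal follows the paper's proof exactly: Proposition~\ref{RNP2} with $q=1$, then Theorem~\ref{krT1} applied to $\bF=\bW^K_1$, then Lemma~\ref{KBL1} with $p=\infty$. The index mismatch you flag is real, since the paper applies Theorem~\ref{krT1} directly at $n=\theta(m-1)$, which would need $\lceil m-1+b\rceil\le m$, and your first fix handles it cleanly: with the single choice $b'=2b/(1+b)\in(1,b)$ one has $b'\bigl(\lfloor (m-1)/b\rfloor+1\bigr)\le m$ for all $m\ge 2$ (and likewise with $2n+1$ in the complex case), so no small-$m$ analysis is needed beyond the trivial case $\lfloor\theta(m-1)\rfloor=0$. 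Your second reading via Remark~\ref{BR1} is not supported, though, because the constant function is added before discretizing, so the point count in Theorem~\ref{krT1} really is $\lceil b(n+1)\rceil$ and not $\lceil bn\rceil+1$.
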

  \begin{proof} By Proposition \ref{RNP2} with $q=1$ we know that 
  \be\label{2In2}
   \sigma_m(K,\cL\cK(\infty))_\infty = \ro_m(\bW^K_1,L_\infty).
  \ee
  By Theorem \ref{krT1} with $\bF= \bW^K_1$  we obtain
  \be\label{2In3}
    \ro_m(\bW^K_1,L_\infty) \le Bm^{1/2} d_{\theta (m-1)}(\bW^K_1, L_\infty)
  \ee
  with $\theta =1/b$ in the real case and $\theta =1/(2b)$ in the complex case.
  Finally, by Lemma \ref{KBL1} with $p=\infty$ we get
   \be\label{2In4}
    d_{\theta (m-1)}(\bW^K_1, L_\infty) = \sigma_{\theta (m-1)}(K,\Pi(\infty))_\infty.
  \ee
  Combining relations (\ref{2In2}) -- (\ref{2In4}), we complete the proof of Theorem \ref{2InT1}. 
  \end{proof}
  
  \begin{Proposition}\label{2InP1} The extra factor $m^{1/2}$ in the inequality (\ref{2In1}) of Theorem \ref{2InT1} is sharp. 
  \end{Proposition}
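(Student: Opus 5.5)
The plan is to exhibit, for each $m$, a kernel $K=K_m$ for which the left side of (\ref{2In1}) is of order $1$ while $\sigma_{\theta(m-1)}(K,\Pi(\infty))_\infty\asymp m^{-1/2}$. Then no bound of the form $\sigma_m(K,\cL\cK(\infty))_\infty\le C\varphi(m)\,\sigma_{\theta(m-1)}(K,\Pi(\infty))_\infty$ with $\varphi(m)=o(m^{1/2})$ can hold.

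We translate both sides of (\ref{2In1}) into statements about the class $\bW^K_1$, exactly as in the proof of Theorem \ref{2InT1}. By Proposition \ref{RNP2} with $q=1$ the left side equals $\ro_m(\bW^K_1,L_\infty)$. By Lemma \ref{KBL1} with $p=\infty$ the right side equals $d_{\theta(m-1)}(\bW^K_1,L_\infty)$. So it suffices to find a class where linear sampling recovery in the uniform norm is worse than the Kolmogorov width by a factor $\gg m^{1/2}$.

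Take $M:=2m$, $\Omega^1=\Omega^2=\{1,\dots,M\}$ with the uniform probability measure, and $K(x,y):=M\delta_{x,y}$. All functions are trivially continuous on a finite set. If one insists on subsets of $\bbR^d$ with a continuum, place the $M$ points in $[0,1]$, replace $\delta$ by disjoint narrow continuous bumps, and take $\mu_2$ to be the point masses. Then $\bW^K_1=\{\ff:\ \frac1M\sum_y|\ff(y)|\le 1\}\cdot\frac{1}{M}\cdot M$, which is exactly the unit ball $B_1^M$ viewed in $\ell_\infty^M=L_\infty(\Omega^1)$.

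The lower bound for the left side is immediate. Any $m$ sample points $\xi^1,\dots,\xi^m$ miss some coordinate $x_0$, since $M>m$. The function $f=e_{x_0}\in B_1^M$ then has zero samples. The linear recovery returns $0$ for it, so the error is $\|e_{x_0}\|_\infty=1$. Hence $\ro_m(\bW^K_1,L_\infty)\ge 1$, and trivially $\le 1$.

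The upper bound for the right side uses the classical Kashin--Gluskin estimate $d_n(B_1^M,\ell_\infty^M)\le C n^{-1/2}(\log(eM/n))^{1/2}$ for $n\le M$. For $n=\theta(m-1)$, with $\theta\ge 1/4$ and $M=2m$, the logarithm is bounded, so $\sigma_{\theta(m-1)}(K,\Pi(\infty))_\infty\le C' m^{-1/2}$. The matching lower bound $\gg m^{-1/2}$, valid for $n\le M/2$, shows the right side is genuinely of that order, though only the upper bound is needed for sharpness. Combining the two estimates, $\sigma_m(K,\cL\cK(\infty))_\infty\ge c\,m^{1/2}\sigma_{\theta(m-1)}(K,\Pi(\infty))_\infty$, which proves that the factor $m^{1/2}$ cannot be improved.

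The main obstacle is the width estimate $d_n(B_1^M,\ell_\infty^M)\ll n^{-1/2}$ with $M\asymp n$. This is a deep, probabilistic result that I would cite from Kashin and Gluskin rather than reprove; the rest is bookkeeping. A secondary point is checking that the identifications in Proposition \ref{RNP2} and Lemma \ref{KBL1} remain valid for a discrete $\mu_2$, or verifying the continuous bump version directly.
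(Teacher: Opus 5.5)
Your argument is correct, but it takes a different route from the paper's. The paper constructs no extremal kernel. It compares two known two-sided asymptotics on the smoothness classes $\bF^\br_\bq$ with $d=1$ and $1\le q_1\le 2$:
\begin{itemize}
\item $\sup_{K}\sigma_m(K,\Pi(\infty))_\infty\asymp m^{-r_1-r_2+1/q_1-1/2}$, from Theorem \ref{InT5} with $\xi(q_1,\infty)=1/q_1-1/2$;
\item $\sup_{K}\sigma_m(K,\cL\cK(\infty))_\infty\asymp m^{-r_1-r_2+1/q_1}$, from Theorem \ref{InT4b}.
\end{itemize}
Taking suprema in any bound with factor $\varphi(m)$ then forces $\varphi(m)\gg m^{1/2}$.

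What the paper's route buys is that the factor $m^{1/2}$ is needed even in the worst case over a fixed natural class of smooth kernels on $\bbT^2$, which is the regime the paper cares about. The cost is that it rests on two heavy results, one of them the recent lower bound of \cite{VT217}.

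Your route uses one explicit kernel per $m$, with an elementary pigeonhole argument on the left side. On the right side you use a classical finite-dimensional width bound, which indeed follows from $B_1^M\subset B_2^M$ and $d_n(B_2^M,\ell_\infty^M)=d^n(B_1^M,\ell_2^M)\ll(\log(eM/n)/n)^{1/2}$. It exposes the mechanism clearly: $\cL\cK$ approximants only see the $m$ sections $K(\xi^j,\cdot)$, while the identity admits good entrywise low-rank approximation. Your kernels have no uniform smoothness, since the domain or the bump width depends on $m$. So you prove sharpness for the inequality as stated over arbitrary continuous $K$, which is what Proposition \ref{2InP1} asserts, but not sharpness within a fixed class.

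Two small remarks.
\begin{enumerate}
\item With $K=M\delta_{x,y}$ and the uniform probability measure you get $\bW^K_1=MB_1^M$, not $B_1^M$. This is harmless, because both sides of (\ref{2In1}) are homogeneous in $K$; alternatively, take $K=\delta_{x,y}$.
\item The detour through Proposition \ref{RNP2} and Lemma \ref{KBL1} is unnecessary, and dropping it settles your ``secondary point''.
\begin{itemize}
\item On the left, every $m$-term $\cL\cK$ approximant $\sum_j u_j(x)K(\xi^j,y)$ vanishes on some column $y_0\notin\{\xi^j\}$. Hence the error is at least $K(y_0,y_0)$.
\item On the right, for $K=\delta_{x,y}$ one has exactly $\sigma_n(K,\Pi(\infty))_\infty=\min_{\operatorname{rank}A\le n}\max_{i,l}|\delta_{il}-A_{il}|=d_n(B_1^M,\ell_\infty^M)$.
\item In the bump version, $\sum_{i,l}A_{il}\beta_i(x)\beta_l(y)$ transfers such an $A$ to $[0,1]^2$ with the same sup-norm error. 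An unused bump center $t_{i_0}$ again gives error at least $1$ for the $\cL\cK$ approximants.
\end{itemize}
\end{enumerate}
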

  \begin{proof} The claim of Proposition \ref{2InP1} follows from known results.  The following result on bilinear approximations is known.  

 \begin{Theorem}[{\cite{VT32}}, Theorem 2]\label{InT5} Let $d=1$ and $\bF^\br_\bq$ denote one of the classes $\bW^\br_\bq$ or $\bH^\br_\bq$. Then for $\br > \mathbf{1}$ and $1\le q_1\le p_1 \le \infty$, $1\le q_2,p_2 \le \infty$ we have 
 $$
 \sup_{K\in \bF^\br_\bq} \sigma_m(K,\Pi(\bp))_\bp \asymp m^{-r_1-r_2 + \xi(q_1,p_1)}  
 $$
 where $\xi(q,p)$ is defined in (\ref{ksi}). 
 \end{Theorem}
 
 The following result of approximation with respect to adaptive dictionaries was obtained in the recent paper \cite{VT217} (see Theorem 1.8 there). 
 
  \begin{Theorem} \label{InT4b} Let $d=1$ and $\bF^\br_\bq$ denote one of the classes $\bW^\br_\bq$ or $\bH^\br_\bq$ (see the definition in Section \ref{fc} below) of functions of two variables.  Then for     $1\le q_1\le 2$, $1\le q_2 \le \infty$,  and $\br > \br(\bq)$   we have 
 $$
\sup_{K\in \bF^\br_\bq}  \sigma_m(K,\cL\cK(\infty))_\infty \asymp m^{-r_1-r_2 + 1/q_1} .
 $$
 \end{Theorem}

In the case of scalar $p  = \infty$ and scalar $q\in [1,2]$ we have 
$$
 \xi(q,p) = 1/q -1/2.
$$
It remains to compare Theorems \ref{InT5} and \ref{InT4b} in this case.
  
  \end{proof}
  
  We now proceed to the General Problem $\cL\cK-\Pi$ in the case of scalar $p=2$. 
  
  \begin{Theorem}\label{RIT2}  Let $\bu = (u,\dots,u)\in \bbR^d$, $u>1/2$. Assume that for every $\bz\in \Omega^1$ we have $K(\bz,\cdot) \in L_{2}(\Omega^2)$ and $K^{(u)} := D^{\bu}_\bx K \in L_2(\Omega^1\times\Omega^2)$. Then     we have
 \be\label{RN2a}
   \sigma_m(K,\cL\cK)_{L^*_{\infty,2}} \le C(u,d) m^{-u}(\log m)^{u(d-1)+1/2} \sigma_{m/8}(K^{(u)},\Pi)_2. 
 \ee
 \end{Theorem}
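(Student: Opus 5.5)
By Proposition \ref{RNP2} with $q=2$ (so $q'=2$), the left-hand side of (\ref{RN2a}) equals $\ro_m(\bW^K_2,L_\infty)$. The plan is therefore to bound this recovery error through the chain already set up in the paper: first compare recovery with the Kolmogorov width in $L_\infty$ via Theorem \ref{krT1}, then factor the width through the fractional derivative $K^{(u)}$ via Lemma \ref{KBL2}, and finally convert the $L_2$-width for the kernel $K^{(u)}$ into bilinear approximation via Lemma \ref{KBL1}.

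Concretely, I will take $b=2$ in Theorem \ref{krT1}. In either the real or the complex case this gives, for $n$ with $\lceil 2(2n+1)\rceil\le m$ (so $n\asymp m/4$, and $n=\lfloor m/4\rfloor-1$ is enough for $m$ large),
$$
\ro_m(\bW^K_2,L_\infty)\le B n^{1/2} d_n(\bW^K_2,L_\infty).
$$
The class $\bW^K_2$ is a compact subset of $\cC(\Omega^1)$ once $u>1/2$, because $\bW^K_2=J_{F_\bu}(\bW^{K^{(u)}}_2)$ and $J_{F_\bu}$ maps $L_2$ into continuous functions.

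Next I apply Lemma \ref{KBL2} with $g=K^{(u)}$ and $2k=n$, i.e.\ $k\asymp m/8$:
$$
d_n(\bW^K_2,L_\infty)\le d_k(\bW^\bu_2,L_\infty)\, d_k(\bW^{K^{(u)}}_2,L_2).
$$
Belinsky's bound (\ref{KolWb}) gives $d_k(\bW^\bu_2,L_\infty)\ll k^{-u}(\log k)^{(d-1)u+1/2}$. For the second factor, (\ref{KB1a}) of Lemma \ref{KBL1} with $p=q=2$ gives $d_k(\bW^{K^{(u)}}_2,L_2)\le \tau_k(K^{(u)})_2=\sigma_k(K^{(u)},\Pi)_2$.

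Multiplying these three bounds produces $m^{1/2}\cdot m^{-u}(\log m)^{(d-1)u+1/2}\,\sigma_{m/8}(K^{(u)},\Pi)_2$. This is too large by the factor $m^{1/2}$, and removing that factor is the step I expect to be the main obstacle. The loss comes from Theorem \ref{krT1}, which uses the Kiefer--Wolfowitz Nikol'skii constant $N^{1/2}$ and ignores the structure of the problem.

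To avoid this loss I would bypass Theorem \ref{krT1} and work directly with Theorem \ref{BT1a}. Let $\mu$ be normalized Lebesgue measure and let $X_N$ be the hyperbolic-cross type subspace realizing Belinsky's bound for $\bW^\bu_2$. Pull back, i.e.\ take $X=J_{F_\bu}(V)$, where $V$ is the $k$-dimensional span of the $u_i$ from a near-optimal bilinear approximation $\sum u_i(\bx)v_i(\by)$ of $K^{(u)}$ in $L_2$. Then recover $f=J_{F_\bu}J_{K^{(u)}}\ff$ by weighted least squares on $X_N\oplus X$ using the discretization of Theorem \ref{BT2} together with Remark \ref{BR1}.

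The intended estimate splits as follows. The $L_\infty$ error is controlled by the approximation of $F_\bu$ from $X_N$, contributing $k^{-u}(\log k)^{(d-1)u+1/2}$, times the $L_2$ size of the residual $(K^{(u)}-\sum u_iv_i)$ acting on $\ff$, which contributes $\sigma_k(K^{(u)},\Pi)_2$. The least-squares step is then needed only in $L_2$, where Theorem \ref{BT1} with $p=2$ has no $N^{1/2}$ loss. The difficult point is to verify that the $L_2$-stable least-squares step still transfers to the uniform norm, without a Nikol'skii factor, for the product structure $J_{F_\bu}(\cdot)$. If this cannot be arranged, the fallback is the weaker bound carrying the extra $m^{1/2}$.
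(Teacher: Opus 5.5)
Up to the last factor, your chain is the paper's proof: Proposition \ref{RNP2} with $q=2$, then a recovery--width inequality carrying an $m^{1/2}$ loss, then Lemma \ref{KBL2}, then Belinsky's bound (\ref{KolWb}). The paper uses Theorem \ref{KPUU2}, $\ro_{4n}(\bF,L_\infty)\le Cn^{1/2}d_n(\bF,L_\infty)$, for the recovery step; your Theorem \ref{krT1} with $b=2$ works equally well.

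The gap is in the factor $d_k(\bW^g_2,L_2)$, with $g=K^{(u)}$ and $k\asymp m/8$. Bounding it through (\ref{KB1a}) by $\tau_k(g)_2$ discards exactly the $m^{-1/2}$ you are missing. In the Hilbert setting this width is a singular number: $d_k(\bW^g_2,L_2)=s_{k+1}(J_g)$ by (\ref{KB7}). Schmidt's formula (\ref{KB8}) and the monotonicity of the $s_i$ then give $j\,s_{2j}(J_g)^2\le\sum_{i=j+1}^{2j}s_i(J_g)^2\le\tau_j(g)_2^2$, which is (\ref{KB9}). Hence $d_{m/8}(\bW^g_2,L_2)\ll m^{-1/2}\sigma_{m/16}(g,\Pi)_2$. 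This $m^{-1/2}$ cancels the $m^{1/2}$ from the recovery step, and that is the entire proof in the paper.

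This lands on $\sigma_{m/16}$, which is what the paper's own computation yields. To reach the index $m/8$ as stated, use the uneven form $d_{a+b}(QP(B_X),Z)\le d_a(P(B_X),Y)\,d_b(Q(B_Y),Z)$ of (\ref{KB6}). Give about $m/16$ dimensions to $\bW^\bu_2$ and $3m/16$ to $\bW^g_2$, so that $s_{3m/16+1}(J_g)\le (m/16)^{-1/2}\tau_{m/8}(g)_2$; only the constant changes.

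So you have misdiagnosed the obstacle: the paper never removes the $m^{1/2}$ from the recovery step, it compensates for it with the gain in the $L_2$-width. The alternative you sketch does not close the gap as written, and through Theorem \ref{BT1a} it cannot. For any $N$-dimensional subspace of $L_2(\Omega,\mu)$ with $\mu$ a probability measure, the Nikol'skii constant in $NI(2,\infty,M)$ satisfies $M\ge N^{1/2}$. To see this, take an orthonormal basis $\{e_i\}$; since $\int\sum_i|e_i|^2\,d\mu=N$, there is a point $\bx_0$ with $\sum_i|e_i(\bx_0)|^2\ge N$. The function $f=\sum_i\overline{e_i(\bx_0)}e_i$ then has $\|f\|_\infty\ge N^{1/2}\|f\|_2$. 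Transferring an $L_2$-stable least-squares error to $L_\infty$ on $X_N\oplus X$ therefore reintroduces the same factor. Your proposal leaves this step open and falls back to the bound with the extra $m^{1/2}$, so it does not prove (\ref{RN2a}).
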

 
 Here is a direct corollary of Theorem \ref{RIT2}.
  \begin{Corollary}\label{RIC1} Under conditions of Theorem \ref{RIT2} we have 
   \be\label{RI2}
   \sigma_m(K,\cL\cK)_2 \le C(u,d) m^{-u}(\log m)^{u(d-1)+1/2} \sigma_{m/16}(K^{(u)},\Pi)_2.
 \ee
 \end{Corollary}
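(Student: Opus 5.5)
The corollary should follow from Theorem \ref{RIT2} by comparing norms, with one small adjustment of the index. On a probability space the mixed norm $\|\cdot\|_2=\|\cdot\|_{2,2}$ is dominated by $\|\cdot\|^*_{\infty,2}$. Indeed, by Fubini,
$$
\|f\|_2^2=\int_{\Omega^1}\|f(\bx,\cdot)\|_2^2\,d\mu_1\le \sup_{\bx\in\Omega^1}\|f(\bx,\cdot)\|_2^2 .
$$
Hence for every $h\in\Sigma_m(\cL\cK)$ we have $\|K-h\|_2\le \|K-h\|^*_{\infty,2}$. Taking the infimum over $h$ gives
$$
\sigma_m(K,\cL\cK)_2\le \sigma_m(K,\cL\cK)_{L^*_{\infty,2}}.
$$
The hypotheses of Theorem \ref{RIT2} are the same as those of the corollary, so the right-hand side is bounded by $C(u,d)m^{-u}(\log m)^{u(d-1)+1/2}\sigma_{m/8}(K^{(u)},\Pi)_2$.

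What remains is to get $m/16$ in place of $m/8$. The quantity $\sigma_n(\cdot,\Pi)_2$ is nonincreasing in $n$, so $\sigma_{m/8}\le\sigma_{m/16}$. This already gives (\ref{RI2}), and with the same constant.

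The authors may have written $m/16$ for a different reason. They may intend the proof to pass through an intermediate count such as $\lfloor m/2\rfloor$ terms, in order to handle integer rounding. They may also intend to use $\sigma_{2n}\le \sigma_n$-type steps, for example by applying the theorem with $m$ replaced by $m/2$. In that case the factor $(m/2)^{-u}(\log(m/2))^{\dots}$ is comparable to $m^{-u}(\log m)^{\dots}$ up to a constant depending on $u$ and $d$, and monotonicity in $m$ of $\sigma_m(K,\cL\cK)_2$ closes the argument. Either route is routine.

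The only point needing a little care is how $\sigma_{m/8}$ is read when $m/8$ is not an integer. I would interpret it as $\sigma_{\lfloor m/8\rfloor}$ and use monotonicity as above. No step is a real obstacle.
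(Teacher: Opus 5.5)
Your proposal is correct and matches the argument the paper intends when it calls this a direct corollary: on a probability space $\|\cdot\|_2\le\|\cdot\|^*_{\infty,2}$, so $\sigma_m(K,\cL\cK)_2\le\sigma_m(K,\cL\cK)_{L^*_{\infty,2}}$, and Theorem \ref{RIT2} finishes. On the index, the paper's proof of Theorem \ref{RIT2} ends with $\sigma_{m/16}(K^{(u)},\Pi)_2$ at (\ref{RI7}), which is why the corollary has $m/16$; your monotonicity step $\sigma_{m/8}\le\sigma_{m/16}$ is needed only because you quote the theorem's stated $m/8$.
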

 {\bf Proof of Theorem \ref{RIT2}.} By Proposition \ref{RNP2} with $q=2$ we find that 
  \be\label{RI3}
   \sigma_m(K,\cL\cK)_{L^*_{\infty,2}} = \ro_m(\bW^K_2,L_\infty).
  \ee
  By Theorem \ref{KPUU2} with $\bF= \bW^K_2$ and $p=\infty$ we obtain
  \be\label{RI4}
    \ro_m(\bW^K_2,L_\infty) \le Cm^{1/2} d_{m/4}(\bW^K_2, L_\infty).
  \ee
By Lemma \ref{KBL2} we get
 \be\label{RI5}
d_{m/4}(\bW^K_2,L_\infty) \le d_{m/8}(\bW^\bu_2,L_\infty) d_{m/8}(\bW^g_2,L_2), \quad g:= K^{(u)}.
\ee
  By (\ref{KolWb}) we find
   \be\label{RI6}
d_{m/8}(\bW^u_{2},L_\infty) \ll m^{-u}(\log m)^{(d-1)u+1/2}, \qquad u>1/2. 
\ee
 Next by (\ref{KB7}) and  (\ref{KB9}) we conclude
 \be\label{RI7}
 d_{m/8}(\bW^g_2,L_2) \le (m/8)^{-1/2} \sigma_{m/16}(g,\Pi)_2.
 \ee
 Combining (\ref{RI3}) -- (\ref{RI7}), we complete the proof of Theorem \ref{RIT2}.
  
\subsection{Some upper bounds}
\label{UP}

We formulate corollaries of Proposition \ref{RNP2}, Theorems \ref{KBT1}, \ref{KBT2}, and Theorem \ref{krT1}. 

 \begin{Theorem}\label{UBT1} Let $d\in\bbN$. 
Assume that  we have $r_1>1/2$, $r_2>0$. Then
 $$
\sup_{K\in \bW^\br_2}  \sigma_m(K,\cL\cK)_{L^*_{\infty,2}} \ll m^{-r_1-r_2}(\log m)^{(d-1)(r_1+r_2)+1/2} .
 $$
 \end{Theorem}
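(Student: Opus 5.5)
The plan is to chain Proposition \ref{RNP2}, Theorem \ref{krT1} (or Theorem \ref{KPUU2} with $p=\infty$), and Theorem \ref{KBT1}. No new analytic input should be needed.

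First, fix $K\in \bW^\br_2$ with $r_1>1/2$, $r_2>0$. Such a $K$ is continuous in $\bx$ for each $\by$; more precisely $K(\bz,\cdot)\in L_2(\bbT^d)$ for every $\bz$, since $r_1>1/2$ and the embedding into $\bx$-continuity holds in the mixed $L_2$ setting. Hence Proposition \ref{RNP2} with $q=2$ (so $q'=2$) applies and gives
$$
\sigma_m(K,\cL\cK)_{L^*_{\infty,2}} = \ro_m(\bW^K_2,L_\infty).
$$

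Second, apply Theorem \ref{KPUU2} with $p=\infty$ and $\bF=\bW^K_2$. To do this I need $\bW^K_2$ to be a compact subset of $\cC(\bbT^d)$. This follows from $K=I^{\br^1}_\bx I^{\br^2}_\by\phi$ with $r_1>1/2$. Indeed, $\bW^K_2=J_{F_\bu}(\bW^g_2)$ as in the proof of Theorem \ref{KBT1}, $J_{F_\bu}$ maps $L_2$ compactly into $\cC$ for $u>1/2$, and $\bW^g_2$ is bounded in $L_2$. Then
$$
\ro_m(\bW^K_2,L_\infty)\le Cm^{1/2} d_{[m/4]}(\bW^K_2,L_\infty).
$$

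Third, invoke Theorem \ref{KBT1} with $[m/4]$ in place of $m$, which gives
$$
d_{[m/4]}(\bW^K_2,L_\infty)\ll m^{-r_1-r_2-1/2}(\log m)^{(d-1)(r_1+r_2)+1/2}.
$$
Its implied constant is uniform over $K\in\bW^\br_2$. Multiplying by $m^{1/2}$ and taking the supremum over $K$ gives the claim, since $\log(m/4)\asymp \log m$ for large $m$; small $m$ are absorbed into the constant.

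The only point requiring care is the verification of the hypotheses, namely the pointwise-in-$\bz$ integrability for Proposition \ref{RNP2} and the compactness in $\cC$ for Theorem \ref{KPUU2}. Both reduce to the factorization $J_K=J_{F_\bu}J_g$ with $1/2<u<r_1$, together with the fact that $F_\bu\in L_2(\bbT^d)$ for $u>1/2$. That fact yields $K(\bz,\cdot)=\int F_\bu(\bz-\bw)g(\bw,\cdot)\,d\bw\in L_2$ uniformly in $\bz$, by the Cauchy--Schwarz inequality.
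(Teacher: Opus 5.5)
Your proposal is correct and follows exactly the chain the paper intends. Proposition \ref{RNP2} with $q=2$ identifies $\sigma_m(K,\cL\cK)_{L^*_{\infty,2}}$ with $\varrho_m(\bW^K_2,L_\infty)$. The $L_\infty$ recovery-versus-width inequality then costs a factor $m^{1/2}$; you take it from Theorem \ref{KPUU2} with $p=\infty$, as the paper does in the proof of Theorem \ref{RIT2}, while Theorem \ref{krT1} gives the same bound with a sharper point count. Theorem \ref{KBT1} finishes the argument.

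Your check of the hypotheses through $J_K=J_{F_\bu}J_g$ is a correct detail the paper leaves implicit: it gives $K(\bz,\cdot)\in L_2$ for every $\bz$ and compactness of $\bW^K_2$ in $\cC$. One small fix: your opening claim that $K$ is continuous in $\bx$ ``for each $\by$'' should read ``for almost every $\by$''.
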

 
 \begin{Corollary}\label{UBC1} Let $d\in\bbN$. 
Assume that  we have $r_1>1/2$, $r_2>0$. Then
 $$
\sup_{K\in \bW^\br_2}  \sigma_m(K,\cL\cK)_{2} \ll m^{-r_1-r_2}(\log m)^{(d-1)(r_1+r_2)+1/2} .
 $$
 \end{Corollary}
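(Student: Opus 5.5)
The statement to prove is Corollary \ref{UBC1}. The plan is to deduce it from Theorem \ref{UBT1} by comparing the two norms. After that comparison, nothing else needs to be done.

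\textbf{Step 1: comparing the norms.} For any function $f(\bx,\by)$ on $\Omega^1\times\Omega^2$ with probability measures, Fubini gives
$$
\|f\|_2 = \bigl\|\|f(\bx,\cdot)\|_2\bigr\|_{L_2(\Omega^1)} \le \bigl\|\|f(\bx,\cdot)\|_2\bigr\|_{L_\infty(\Omega^1)} = \|f\|^*_{\infty,2}.
$$
The inequality holds because the $L_2$ norm over a probability space is dominated by the sup norm.

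\textbf{Step 2: transferring to best $m$-term approximation.} Take any $m$-term combination $h=\sum_{j=1}^m u_j(\bx,\bz^j)K(\bz^j,\by)$ from $\cL\cK$. Step 1 applied to $K-h$ gives $\|K-h\|_2\le \|K-h\|^*_{\infty,2}$. Taking the infimum over $h$ yields
$$
\sigma_m(K,\cL\cK)_2\le \sigma_m(K,\cL\cK)_{L^*_{\infty,2}}.
$$

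\textbf{Step 3: taking the supremum.} Take the supremum over $K\in\bW^\br_2$ and apply Theorem \ref{UBT1}. This gives the bound $m^{-r_1-r_2}(\log m)^{(d-1)(r_1+r_2)+1/2}$.

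\textbf{Where the substance lies.} The argument has no real obstacle. The only point needing care is that the system $\cL\cK$ is the same in both norms: its elements are admissible, i.e.\ $K(\bz,\cdot)\in L_2$, because $K$ is continuous for $r_1>1/2$, $r_2>0$. All the analytical content sits in Theorem \ref{UBT1}. That theorem itself comes from three ingredients:
\begin{enumerate}
\item Proposition \ref{RNP2} with $q=2$;
\item Theorem \ref{krT1}, or Theorem \ref{KPUU2}, giving $\ro_m(\bW^K_2,L_\infty)\ll m^{1/2}d_{cm}(\bW^K_2,L_\infty)$;
\item Theorem \ref{KBT1}, which supplies the factor $m^{-r_1-r_2-1/2}(\log m)^{(d-1)(r_1+r_2)+1/2}$.
\end{enumerate}
In ingredient 2 the loss is only a constant change of $m$, which is harmless for polynomial-logarithmic rates.
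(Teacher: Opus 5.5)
Your proposal is correct and follows the paper's intended route: Corollary \ref{UBC1} is Theorem \ref{UBT1} combined with $\|f\|_2\le\|f\|^*_{\infty,2}$ on a probability space, the same one-line comparison behind Corollary \ref{RIC1}. One correction to your side remark: for $r_2\le 1/2$ a kernel $K\in\bW^\br_2$ need not be continuous or bounded, but $K(\bz,\cdot)\in L_2$ for every $\bz$ still holds because $r_1>1/2$ gives $F_{\br^1}\in L_2(\bbT^d)$. Moreover, the two systems are not literally the same, since bounded coefficients are required for $L^*_{\infty,2}$ versus $L_2$ coefficients for $L_2$; the former set is contained in the latter, though, which is all your infimum comparison needs.
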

 
  \begin{Theorem}\label{UBT2} Let $d\in\bbN$. 
Assume that  we have $r_1>1/2$, $r_2>0$. Then
 $$
\sup_{K\in \bH^{r_1,r_2}_2}  \sigma_m(K,\cL\cK)_{L^*_{\infty,2}} \ll m^{-r_1-r_2}(\log m)^{(d-1)(r_1+r_2)+d-1/2} .
 $$
 \end{Theorem}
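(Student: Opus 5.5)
The plan is to repeat the proof of Theorem \ref{UBT1}, replacing Theorem \ref{KBT1} by Theorem \ref{KBT2}. There are three steps.

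First, by Proposition \ref{RNP2} with $q=2$ we have the exact identity
$$
\sigma_m(K,\cL\cK)_{L^*_{\infty,2}} = \ro_m(\bW^K_2,L_\infty).
$$
To use it we must check that $K(\bz,\cdot)\in L_2(\Omega^2)$ for every $\bz$. This holds because $r_1>1/2$. Indeed, for $K\in\bH^{r_1,r_2}_2$, Theorem \ref{H} gives $\|A_\bs(K)\|_2\ll 2^{-r_1\|\bs^1\|_1-r_2\|\bs^2\|_1}$. The Nikol'skii inequality in $\bx$ for the dyadic blocks costs a factor $2^{\|\bs^1\|_1/2}$, so the series $\sum_\bs A_\bs(K)$ converges in $L^*_{\infty,2}$. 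In fact $K$ is continuous in $\bx$ with values in $L_2(\Omega^2)$.

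Second, apply Theorem \ref{krT1} (or Theorem \ref{KPUU2} with $p=\infty$) to $\bF=\bW^K_2$, viewed as a compact subset of $\cC(\bbT^d)$. This gives
$$
\ro_{m}(\bW^K_2,L_\infty)\le C m^{1/2} d_{m/4}(\bW^K_2,L_\infty)
$$
with an absolute constant. For $m$ below a fixed threshold the bound is trivial.

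Third, Theorem \ref{KBT2} with $m/4$ in place of $m$ gives
$$
d_{m/4}(\bW^K_2)_\infty\ll m^{-r_1-r_2-1/2}(\log m)^{(d-1)(r_1+r_2)+d-1/2},
$$
uniformly over $K\in\bH^{r_1,r_2}_2$. Multiplying by $m^{1/2}$ gives the claimed bound $m^{-r_1-r_2}(\log m)^{(d-1)(r_1+r_2)+d-1/2}$. Replacing $m$ by $m/4$ changes only constants.

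The only delicate point is the first step: the applicability hypotheses, namely compactness of $\bW^K_2$ in $\cC$ and the pointwise $L_2$ property of $K(\bz,\cdot)$. Both follow from the dyadic-block estimate above, and this is where $r_1>1/2$ is used. The rest is a direct chaining of the stated results.
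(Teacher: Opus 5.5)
Your proposal is correct and is essentially the paper's argument: the paper presents Theorem \ref{UBT2} as a direct corollary obtained by chaining Proposition \ref{RNP2} with $q=2$, the recovery-versus-width inequality of Theorem \ref{krT1} (equivalently Theorem \ref{KPUU2} with $p=\infty$), and the width bound of Theorem \ref{KBT2}. You also check that $K(\bz,\cdot)\in L_2$ for every $\bz$ and that $\bW^K_2$ is compact in $\cC(\bbT^d)$, using the dyadic-block Nikol'skii estimate with $r_1>1/2$; these are hypotheses the paper leaves implicit.
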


\begin{Corollary}\label{UBC2} Let $d\in\bbN$. 
Assume that  we have $r_1>1/2$, $r_2>0$. Then
 $$
\sup_{K\in \bH^{r_1,r_2}_2}  \sigma_m(K,\cL\cK)_{2} \ll m^{-r_1-r_2}(\log m)^{(d-1)(r_1+r_2)+d-1/2} .
 $$
 \end{Corollary}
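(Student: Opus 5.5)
The statement is Corollary \ref{UBC2}. I will deduce it from Theorem \ref{UBT2} by comparing norms: the $L_2$ norm on $\bbT^{2d}$ is dominated by the $L^*_{\infty,2}$ norm. So I first establish Theorem \ref{UBT2} along the lines the paper indicates, then pass to $L_2$.

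For Theorem \ref{UBT2}, I start from Proposition \ref{RNP2} with $q=2$. It gives
$$
\sigma_m(K,\cL\cK)_{L^*_{\infty,2}} = \ro_m(\bW^K_2,L_\infty).
$$
Next I apply Theorem \ref{krT1}, or alternatively Theorem \ref{KPUU2} with $p=\infty$, with $\bF=\bW^K_2$. This yields
$$
\ro_m(\bW^K_2,L_\infty)\ll m^{1/2} d_{cm}(\bW^K_2,L_\infty)
$$
for some absolute $c>0$, for instance $c=1/4$.

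For this step I must check that $\bW^K_2$ is a compact subset of $\cC(\bbT^d)$. Write $K = I^{\bu}_\bx g$ with $u>1/2$ and $g\in L_2$, as in the proof of Theorem \ref{KBT1}. Since $F_\bu\in L_2$, each function $K(\cdot,\by)$ and each $f\in\bW^K_2$ is continuous. Compactness follows from the composition $J_K = J_{F_\bu}J_g$: here $J_g$ is a Hilbert–Schmidt operator and $J_{F_\bu}$ maps $L_2$ boundedly into $\cC$. Finally, Theorem \ref{KBT2} gives
$$
d_{cm}(\bW^K_2)_\infty\ll m^{-r_1-r_2-1/2}(\log m)^{(d-1)(r_1+r_2)+d-1/2}
$$
uniformly over $K\in\bH^{r_1,r_2}_2$. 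Multiplying by $m^{1/2}$ gives Theorem \ref{UBT2}; the constant $c$ only affects the implied constant.

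For the corollary, take any $h=\sum_{j=1}^m K(\xi^j,\by)\psi_j(\bx)$. Because $\mu$ is a probability measure,
$$
\|K-h\|_2 = \Big(\int \|K(\bx,\cdot)-h(\bx,\cdot)\|_2^2\,d\mu_1(\bx)\Big)^{1/2} \le \sup_{\bx}\|K(\bx,\cdot)-h(\bx,\cdot)\|_2 = \|K-h\|^*_{\infty,2}.
$$
Taking the infimum over such $h$ gives $\sigma_m(K,\cL\cK)_2\le \sigma_m(K,\cL\cK)_{L^*_{\infty,2}}$. Taking the supremum over $K\in\bH^{r_1,r_2}_2$ and applying Theorem \ref{UBT2} completes the proof.

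The only real obstacle is the technical requirement $\sup_\bx$ in $L^*_{\infty,2}$: it needs $K(\bz,\cdot)\in L_2$ for every $\bz$, and continuity in $\bx$. Both follow from $r_1>1/2$ through the representation $K=I^{\bu}_\bx g$, since $\|K(\bz,\cdot)\|_2\le\|F_\bu\|_2\|g\|_2$ by Minkowski's inequality.
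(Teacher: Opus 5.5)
Your proof is correct and follows the paper's route. Theorem \ref{UBT2} comes from Proposition \ref{RNP2} with $q=2$, Theorem \ref{krT1} (or Theorem \ref{KPUU2} with $p=\infty$) and Theorem \ref{KBT2}; the corollary then follows from $\|\cdot\|_2\le\|\cdot\|^*_{\infty,2}$ on a probability space. Your extra checks — compactness of $\bW^K_2$ in $\cC$, and $K(\bz,\cdot)\in L_2$ for every $\bz$ via $K=I^\bu_\bx g$ — are details the paper leaves implicit.
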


 \Addresses
 
\end{document}